\documentclass[11pt,a4paper]{amsart}
\usepackage[english]{certus}

\usepackage{graphicx}
\usepackage{graphics}
\usepackage{pict2e}
\usepackage{epic}
\usepackage{eqnarray}
\usepackage{varwidth}
 \usepackage{enumerate}
\usepackage{pdfpages}
\usepackage{blindtext}
\usepackage{epstopdf}

\DeclareMathOperator{\stab}{stab}



\newcommand*{\Z}{\mathbb Z}
\newcommand*{\R}{\mathbb R}
\newcommand*{\N}{\mathbb N}



\theoremstyle{plain}
\newtheorem{thm}{Theorem}[section]
\newtheorem{lemma}[thm]{Lemma}
\newtheorem{cor}[thm]{Corollary}
\newtheorem{prop}[thm]{Proposition}

\theoremstyle{definition}
\newtheorem{defi}[thm]{Definition}

\theoremstyle{remark}
\newtheorem{rem}[thm]{Remark}
\newtheorem{ex}[thm]{Example}
\newtheorem{qu}[thm]{Question}

\theoremstyle{plain}


 \bibliographystyle{alpha}

\title{Sofic boundaries and a-T-menability}
\author{Vadim Alekseev} 
\address{Vadim Alekseev, Technische Universit\"{a}t Dresden, Fakultät Mathematik, Institut f\"{u}r Geometrie, 01062, Dresden, Deutschland}
\email{vadim.alekseev@tu-dresden.de}
\author{Leonardo Biz}
\address{Leonardo Biz, Technische Universit\"{a}t Dresden, Fakultät Mathematik, Institut f\"{u}r Geometrie, 01062, Dresden, Deutschland}
\email{leonardo.businhani\_biz@tu-dresden.de}

\subjclass[2010]{20L05, 20F65, 46L55}

\begin{document}
\onehalfspace		
	\begin{abstract} 
	We undertake a systematic study of the approximation properties of the topological and measurable versions of the coarse boundary groupoid associated to a sequence of finite graphs of bounded degree. On the topological side, we prove that asymptotic coarse embeddability of the graph sequence into a Hilbert space is equivalent to the coarse boundary groupoid being topologically a-T-menable, thus answering a question by Rufus Willett. On the measure-theoretic side, we prove that measure-theoretic amenability and a-T-menability of the coarse boundary groupoid are related to hyperfiniteness and property almost-A resp. an version of ``almost asymptotic embeddability into Hilbert space''. These results can be directly applied to spaces of graphs coming from sofic approximations.
	\end{abstract} 
	
		\maketitle
	\tableofcontents
	
	

	\tableofcontents

	\section{Introduction}
	
	This paper continues the previous work by first named author and Martin Finn-Sell from \cite{alekseev2016sofic}, relating coarse geometry of the graph spaces obtained from sofic approximations of a group $\Gamma$ and analytic properties of the group, using the coarse boundary groupoid to connect them. There, it was proven that coarse-geometric properties of the sofic approximation like property A or asymptotic coarse embeddability into a Hilbert space imply amenability resp. a-T-menability of the group; that left the question which properties of the sofic approximation would be \emph{equivalent} to amenability resp. a-T-menability of the group. 

	Since then, important progress has been made in the work of Tom Kaiser \cite{kaiser2019combinatorial}, introducing property almost-A of a graph sequence which, applied to a sofic approximation, indeed characterises amenability of a sofic group; however, the techniques used there were local in nature and it remained a task to connect them in the general technique of coarse groupoids, possibly also obtaining the right technology to attack the problem in the a-T-menable case.

	In this paper we systematically investigate both amenability and a-T-menability of the coarse boundary groupoid $\partial G(X)$  attached to a sequence of bounded degree graphs $\{X_i\}_{i\in\N}$. We make use of the idea from \cite{alekseev2016sofic} that the coarse boundary groupoid can usefully be considered both as topological and measured groupoid, reflecting the difference between more ``rigid' coarse geometric properties and ``geometry up to negligible subsets'', usually encountered in the context of sofic approximations.

	In the amenable case, we prove the following result.
	
	\begin{ThmA}[Theorem \ref{mainthm}]
		Let $\mathcal{X}=\{X_i\}_{i\in\N}$ be a sequence of finite graphs with bounded degree such that the cardinality of $X_i$ goes to infinity when $i$ goes to infinity. Take $X$ to be the space of graphs of this sequence and ${\partial G(X)}$ the related coarse boundary groupoid. The following statements are equivalent:
		\begin{enumerate}[$(1)$]
			\item The measurable coarse boundary groupoid $({\partial G(X)},\hat{\mu}_\omega)$ is measurably amenable, for every non-principal ultrafilter $\omega\in\partial\beta\N$;
			\item  The sequence of graphs $\mathcal{X}$ has property A on average along every non-principal ultrafilter $\omega\in\partial\beta\N$;
			\item The sequence of graphs $\mathcal{X}$ has property almost-A along every non-principal ultrafilter $\omega \in \partial \beta \N$;
			\item The sequence of graphs $\mathcal{X}$ is hyperfinite;
			\item The measurable equivalence relation $({\partial G(X)},\hat{\mu}_\omega)$ given by the coarse boundary groupoid is $\mu_\omega$-hyperfinite, for every non-principal ultrafilter $\omega\in\partial\beta\N$.
		\end{enumerate}
	\end{ThmA}
	
	The main novelty of this theorem is to reinterpret the known results about hyperfiniteness and property almost-A using the language of groupoids. Indeed, as it turns out, the subtle difference between property A and hyperfiniteness turns out to be precisely the difference between topological and measured amenability of the coarse boundary groupoid.
	
	This analogy also helps us to deal with the a-T-menable case. On the topological side, we were able to answer a question raised by Rufus Willett in \cite{willett2015random} where he proved that \emph{asymptotic coarse embeddability} is a sufficient condition for the a-T-menability of the coarse boundary groupoid $\partial G(X)$, when $X$ is the space of graphs of a sequence of finite bounded degree graphs. We were able to prove the converse of Willett's result:

	\begin{ThmA}[{Theorem \ref{thmtopaTmenable}}]
		Let $\mathcal{X}=\{X_i\}_{i\in\N}$ be a sequence of finite graphs with bounded degree and $X$ the related space of graphs. If the coarse boundary groupoid $\partial G(X)$ is topologically a-T-menable then the sequence of graphs $\mathcal{X}$ is asymptotically coarsely embeddable into a Hilbert space $\mathcal{H}.$
	\end{ThmA}


	Analogously to the amenable case, the above statement fails if the groupoid is suppposed to only be measurably a-T-menable. However, similarly to the amenable case, it becomes asymptotically coarsely embeddable with we remove a set of small measure along the sequence of graphs, one important difference being that we need to consider the old graph metric.

	\begin{ThmA}[{Theorem \ref{thmMeaaTmenable}}]
		Let $\mathcal{X}=\{X_i\}_{i\in\N}$ be a sequence of finite graphs with bounded degree and $X$ the related space of graphs. If the coarse boundary groupoid $(\partial G(X),\mu_\omega)$ is a measurably a-T-menable for every non-principal ultrafilter $\omega\in\partial\beta\N$, then, for every $\varepsilon>0$, there exist $\{Z_i\}_{i\in\N}\subset \{X_i\}_{i\in\N}$ such that $\lim\limits_{i\to\infty} {\mu}_i(Z_i)\leq \varepsilon$ and $\{X_i\backslash Z_i\}_{i\in\N}$ is asymptotically coarsely embeddable into a Hilbert space $\mathcal{H}$ when equipped with the same metric on $X$.
	\end{ThmA}

	
	Findally, we notice that in the sofic case the above statement can be improved to yield $\varepsilon=0$:

	\begin{ThmA}[{Proposition \ref{aTmeansoficapp}}]
		Let $\Gamma$ be a sofic finitely generated group with sofic approximation $\mathcal{G}=\{X_i\}_{i\in\N}$. Then, the group $\Gamma$ is a-T-menable if and only if there is a sequence of subgraphs $\{Z_i\}_{i\in\N} \subset\{X_i\}_{i\in\N} $ of the sofic approximation $\mathcal{G}$ such that $\lim\limits_{i\to \infty} \frac{|Z_i|}{|X_i|}=0$ and $\{X_i\setminus Z_i\}_{i\in\N} $ is asymptotically coarsely embeddable into a Hilbert space $\mathcal{H}$ with the old metric of the sofic approximation.
	\end{ThmA}

	\textbf{Acknowledgements.} The authors would like to thank Rufus Willett for comments and suggestions that greatly helped to improve the text, Martin Finn-Sell and Andreas Thom for stimulating discussions.

	\section{Coarse geometry}
	
	Coarse geometry is the study of spaces from a far away point of view. Different from what we learn in Analysis that they are interested in an small scale properties of the space, here we care about when bounded spaces are preserved by maps. In this large scale geometry, the integers set is equivalent to the real numbers, any finite set is the same as a point. This point of view is proper to work with infinite objects, their approximations and some related geometric properties that we define soon. For an introduction into this world, see \cite{roe2003lectures}.

	We start this subsection remarking that all spaces we work with are metrizable. In particular, we are also considering that they are uniformly discrete metric space, i.e, there is a $\delta>0$ such that $d(x,y)>\delta$, for all $x\neq y$ in the metric space $(X,d)$. 
	
	We are interested in the large scale properties of a sequence of graphs. Given a sequence of finite metric spaces $(X_i,d_i)_{i\in\N}$, we can consider it as a single metric space and verify some coarse geometric properties along them. For that we need a metric compatible with the previous distance $d_i$ when restricted to the respective $X_i$ and is far apart on distinct $X_i$'s. Formally, we have:  
	
	\begin{defi}
		Given a sequence of finite metric spaces $(X_i,d_i)_{i\in\N}$. The \emph{coarse union} of the sequence is a metric space $X=(X,d)$ where $X = \sqcup_{i\in \N} X_i$ is the disjoint union of the given metric spaces equipped with a metric $d$ that satisfies:
		\begin{enumerate}[$(i)$]
			\item  the distance $d\big|_{X_i}$ coincides with the metric on $(X_i,d_i)$, for all $i\in\N$
			\item $d(X_i,X_j)$ goes to infinity when $i+j$ goes to infinity.
		\end{enumerate} 
	\end{defi}
	
	Such a metrization of the coarse union always exists. Indeed, some authors even change the second item for 
	
	\begin{enumerate}[$(ii')$]
		\item $ d(X_i,X_j)=\diam(X_i) + \diam(X_j) +i+j$, 
	\end{enumerate} 
	to make explicit the distance between the spaces. Any metric that satisfies $(ii)$ or $(ii')$ are coarse equivalent. Even more, all metrization as above are coarse equivalent, that is, given a sequence of metric spaces $\{X_i\}_{i\in\N}$ such that the coarse union admits two distinct metrization $d_1$ and $d_2$ that satisfies $(i)$ and $(ii)$ of the previous definition. Then $(\sqcup_{i\in \N} X_i,d_1)$ is coarsely equivalent to $(\sqcup_{i\in \N} X_i,d_2)$.

	More that a sequence of metric spaces, we are particularly interested in a sequence of graphs and their coarse properties. We fix here the coarse union of specific graphs that we will use along this work.
	
	\begin{defi}
		For a given sequence of finite graphs with bounded degree $\{X_i\}_{i\in \N}$ such that the cardinality of $X_i$ is going to infinity when $i$ grows, we equip each graph with the length distance and we call \emph{space of graphs}, the coarse union $X$ of those graphs with the metric $d$ that satisfies item $(i)$ and $(ii)$ of the previous definition.
	\end{defi}
	
	Sometimes we call it space of bounded degree finite graphs to emphasize those graph properties that we fixed in the beginning. We apply this construction mainly to box spaces and sofic approximation. By an abuse of notation we are still calling the space of graphs of them as box space and sofic approximation. 
	
	\begin{ex}
 Given a finitely generated residually finite graph $\Gamma = \langle S \rangle$, by the residually finitennes of the group, there exits a sequence of normal subgroups $\{\Lambda_i\}_{i\in\N}$ of $\Gamma$. It is known that each Cayley graph $\Cay(\Gamma / \Lambda_i, q(S))$ related with the filtration $\{\Lambda_i\}_{i\in\N}$ have the length metric of the graphs $\Cay(\Gamma / \Lambda_i, q(S))$, where $q$ is the quotient map. We now denoted by $(\square_{\Lambda_i} \Gamma,d)$ the \emph{box space of $\Gamma$}, where $(\square_{\Lambda_i} \Gamma)$ is the coarse union as above.  Notice that it is independent on the choice of generators for the group since both are coarse equivalent.
	\end{ex}

The coarse union of a sofic approximation it will only appear in the last section, where we apply the important results to this specific case.

	What is important in this metrization $(X,d)$ of the coarse union is that for a given $R>0$, the cardinality of the balls of radius $R$ centred in a fixed element $x \in X$ is finite. A space that hold this property is called \emph{bounded geometry space}. This is the case of the space of graphs.

	\subsection{Property A or coarse amenability}

	Property A was introduced by Yu in \cite{yu2000coarse} as a coarse version of amenability. That implies that it is also a weak form of amenability. In the same article, Yu proves that metric spaces with property A are coarsely embeddable into a Hilbert spaces, in particular they also satisfy the Coarse Baum-Connes Conjecture. These results are some motivation to study the coarse amenability. A great survey in this area is \cite{willett2006some}, where one can find many equivalent definitions and most of the results we remarked here.

	\begin{defi}\label{propA}
		Let $\mathcal{X}=\{X_i\}_{i \in \N}$ be a sequence of finite graphs and $X$ the related space of graphs.
		The sequence of graphs $\mathcal{X}=\{X_i\}_{i \in \N}$ has \textit{property A}, if for every $\varepsilon>0$, there is a constant $S>0$(not depending on $i$) and a function $\eta: X \to \ell^1(X) $ mapping $x \mapsto \eta_x$
		such that:
		\begin{enumerate}[$(i)$]
			\item $||\eta_x ||_1=1$, for all $x \in X$;
			\item For all $x\in X$, $\eta_x$ is supported in a ball of radius $S$ around $x$, i.e, $\supp(\eta_x) \subseteq B_S(x)$;
			\item If $(x,y)\in E(X_i)$, then $||\eta_x - \eta_y ||_1< \varepsilon$.
			
		\end{enumerate}
	\end{defi}

	Let's see some classical examples and non-examples.

	\begin{ex}
		\begin{enumerate}
			\item  Let $\Gamma$ be a finitely generated residually finite group. The group $\Gamma$ is amenable if and only if $\square_f\Gamma$ has property A (Theorem 11.39 of \cite{roe2003lectures}), where the subindex $f$ on the box spaces means that it is the box space related with the filtration containing all normal subgroups. This box space is also named as the \emph{full box space of the group}.
			
			\item The most famous non-examples of sequence of graphs without property A are the sequence of expanders graphs, introduced in \cite{lubotzky1988ramanujan}. Roughly speaking, a sequence of expanders is a sequence of highly connected graphs with bounded degree. It can be formally defined in different ways. So, a sequence of graphs $\{X_i\}_{i\in\N}$ with bounded degree such that $|X_i|$ goes to infinity when $i$ grows is a sequence of \emph{expanders} if there exist a constant $C>0$ such that for all $i\in \N$ and every 1-Lipschitz map $\psi: X_i \to \ell^2(X_i)$ satisfies 
				$$ \sum\limits_{x,y \in X_i} ||\psi(x)-\psi(y) ||_2^2 \leq C |X_i|^2 . $$

			\item All sequence of finite graphs $\{X_i\}_{i\in\N}$ with degree bounded $d$ with $d>3$ and girth (the length of the shortest cycle) tending to infinity does not have property A. This class of examples were proved by Rufus Willett in \cite[Theorem 1.2]{willett2011property}.

		\end{enumerate}
	\end{ex}

	\subsection{Coarse embeddability into a Hilbert space}
	
	 For geometric reasons, we want to consider coarse embeddings into a Hilbert space $\mathcal{H}$, i.e, a completed normed space where the norm was induced by an inner product. There exist a more classical definition mapping the space into a Hilbert space where the map should satisfies few properties but here, we will take the definition that use kernels.

	Let $X$ be a set. A \emph{kernel} $K:X\times X \to \R$ is a symmetric map, i.e, $K(x,y)=K(y,x)$, for all $x,y \in X$.  The normalization of the kernel depends if it is of positive or negative type.

	\begin{defi}
		A kernel $K:X\times X \to \R$ is \emph{conditionally negative definite} if for all sequence $x_1,\cdots,x_n \in X$ and real numbers $\lambda_1,\cdots,\lambda_n$ such that $\sum\limits_{j=1}^{n} \lambda_j =0$ we have, $$\sum_{i,j=1}^{n} \lambda_i \lambda_j K(x_i,x_j) \leq 0 .$$ 

		Even more, for $K$ a conditionally negative definite kernel, we say that $K$ is \emph{normalized} if $K(x,x)=0$, for all $x\in X$.
	\end{defi}

	Negative and positive type is not only the matter of a negative sign because there are maps that hold both properties. On the other hand, it is clear to notice that a constant kernel is conditionally negative definite. Notice that conditionally negative definite kernels are closed under addition, scalar multiplication by a positive number and pointwise limit.

	The following example is considered the connection between kernels and embeddability into a Hilbert spaces. Let $f$ be a map from a metric space $(X,d)$ to some Hilbert space $\mathcal{H}$. Then, $K(x,y)=|| f(x)-f(y)||_{\mathcal{H}}^2$ defines a kernel of conditionally negative type. Via GNS type construction, given a symmetric normalized kernel $K:X\times X\to \R$ of conditionally negative type, one constructs a Hilbert space $\mathcal{H}$ and a map $f:X\to \mathcal{H}$ such that $K(x,y)=|| f(x)-f(y)||^2$. Thus, we can work with maps into a Hilbert space using kernels.
	
	\begin{defi}
		Let $(X,d)$ be a metric space. We say that $(X,d)$ admits a \emph{coarse embedding into a Hilbert space} if there is a normalized conditionally negative definite kernel $K:X\times X \to \R$ and maps $\rho_1,\rho_2:\R_+ \to \R$ such that $\rho_i(R)$
			go to infinity when $R$ grows and 
			$$ \rho_1(d(x,y)) \leq K(x,y)\leq \rho_2(d(x,y)). $$
			Those maps satisfying the above inequality are called \textit{control functions}. Sometimes we use the notation $\lim\limits_{r\to \infty} \rho_i(r) = \infty$, to say that $\rho_i$ goes to infinity at infinity, for $i=1,2$. 
	\end{defi}


	\subsection{Weakenings of coarse embeddability}

	 In this section, we finally define it and study some variations of embeddability of metric spaces. We ask a bit of patience from the reader. The motivation to define weak forms of embeddability will be clarified in this work.
	
	The first weak version is called \emph{fibred coarse embeddability}, the idea here is to embed the space in a family of Hilbert spaces, i.e, each point can be sent in a different Hilbert space. Since we would like keep some geometric structures, we need maps giving a certain compatibility between the Hilbert spaces. This notion was first defined in \cite{chen2013maximal} by Xiaoman Chen, Qin Wang and Guoliang Yu.

	Fibred embeddability is not the focus of our work, but there is a direct relation between box spaces of groups holding this property and the a-T-menability of the groups, as we will see on the next section. Notice that if a metric space $(X,d)$ is coarsely embeddable into a Hilbert space $\mathcal{H}$ then it is fibred coarsely embeddable, for that just take $\mathcal{H}_x= \mathcal{H}$. However, fibred coarse embeddability is much weaker, in the sense that some sequence of expanders are fibred coarsely embeddable into a Hilbert space.
	
	
	In \cite{willett2015random}, Willett introduced another weak version of coarse embeddability for a sequence of graphs, called \emph{asymptotic coarse embeddability}. This property is also weaker than the fibred coarse embeddability. In the case of the spaces of graphs, coarse embeddabilitty into $\mathcal{H}$ implies the fibred coarse embeddabillity (the converse doesn't hold as witnessed by expanders); in turn, fibred coarse embeddability also implies the asymptotic coarse embeddabillity into a Hilbert space.
	
	The notion of asymptotic coarse embeddability ask for a weakly form of conditionally negative definiteness on the sequence of kernels $(K_i)_{i\in\N}$ defined in each graph $X_i$. Basically, the a sequence of graphs $\{X_i\}_{i\in \N}$ is linked with a sequence of real numbers $(R_i)_{i\in \N}$ going to infinity such that our sequence of kernels is ``$(R_i)$-locally''  conditionally negative definite, i.e, each kernel $K_i$ is conditionally negative on finite sets $\{x_1, \dots, x_n \}\subseteq X_i$ with diameter smaller than $2R_i$. More precisely: 
	
	\begin{defi}\label{asympEmb}
		Let $\mathcal{X}=\{X_i\}_{i\in \N} $ be a sequence of finite graphs of bounded degree such that the cardinality of the graphs $X_i$ goes to infinity when $i$ grows. We say that $\mathcal{X}$ admits an \textit{asymptotically coarse embedding into a Hilbert space} $\mathcal{H}$ if, there is a sequence of symmetric kernels $K_i: X_i \times X_i \to \R $ with non-decreasing control functions $\rho_1,\rho_2:\R_+ \to \R $ tending to infinity at infinity, and a sequence of non-negative real numbers $(R_i)_{i\in \N}$ going to infinity such that for all $i\in\N$:
		\begin{enumerate}[$(1)$]
			
			\item The sequence of kernels $(K_i)_{i\in\N}$ is  \textit{limit-normalized}, i.e, for any sequence $(x_i)_{i\in\N}$ of points in $X$ such that $x_i \in X_i$, the sequence $K_i(x_i,x_i)$ tends to zero, when $i$ goes to infinity. 
			\item For all $ x,y \in X_i$,
			$$ \rho_1(d(x,y)) \leq K_i(x,y) \leq \rho_2(d(x,y)).$$ 
			
			\item For any subset $\{x_1,\dots,x_n\}\subseteq X_i$ of diameter smaller than $2 R_i$ and any collection of real numbers $\lambda_1,\dots,\lambda_n$ with $\sum\limits_{k=1}^n \lambda_k=0$, we have $$\sum\limits_{k,l=1}^{n} \lambda_k\lambda_l K_i(x_k,x_l) \leq 0 .$$  
		\end{enumerate}
		
		For a sequence of kernels satisfying the item $(3)$ above for a sequence $(R_i)_{i\in \N}$, we called it \textit{$(R_i)$-locally conditionally negative definite kernels}.
	\end{defi}


	
	Notice that we slightly change the definition given by Willett. Here, we have a weaker condition than the negative type normalization ($K_i(x,x)=0$, for all  $x\in X_i$). Under this new condition we still have a bijection between symmetric normalized conditionally negative type kernels $K:X\times X\to \R$ and maps $f:X\to \mathcal{H}$, but we lost the identity $K(x,y)= ||f(x)-f(y) ||^2_{\mathcal{H}}$. Indeed, given a symmetric limit-normalized conditionally negative type kernel $K:X\times X\to \R$, set $C^{(0)}_c(X)$ as the vector space of finite supported functions $f:X\to \R$ such that $\sum\limits_{x \in X}f(x)=0$. We can define the bilinear form in $C^{(0)}_c(X)$ as
	
	$$\langle f,g\rangle= -\frac{1}{2}  \sum_{x,y \in X} K(x,y)f(x)g(y).$$
	
	That is symmetric because $K$ it is and $\langle f,f\rangle\geq 0$ for all $f\in C^{(0)}_c(X)$, by the negative typeness of the kernel. Even more, this inner product satisfies the Cauchy-Schwartz inequality, since $\langle tf+g,tf+g\rangle \geq 0$ for all $t\in\R$ and $f,g\in C^{(0)}_c(X)$. So, $t^2\langle f,f\rangle +2t \langle f,g\rangle + \langle g,g \rangle \geq 0$. Thus, the discriminant $4 (\langle f,g\rangle)^2 - 4 \langle f,f\rangle \langle g,g\rangle $ is non-positive. Thus, it satisfies the Cauchy-Schwartz inequality $ |\langle f,g\rangle|^2 \leq |\langle f,f\rangle| \cdot |\langle g,g\rangle| $. The separated completion $\mathcal{H}$ of $C^{(0)}_c(X)$ the induced inner product is a Hilbert space.
	
	For a fixed point $x_0 \in X$, we define $f: X \to \mathcal{H}$ as $x\mapsto \delta_x-\delta_{x_0}$. But then,
	$$||f(x)-f(y) ||_{\mathcal{H}}^2=\langle \delta_x-\delta_y,\delta_x-\delta_y \rangle = K(x,y) -\dfrac{1}{2} K(x,x)-\dfrac{1}{2} K(y,y).$$ 
	We do not necessarily have $||f(x)-f(y) ||_{\mathcal{H}}^2= K(x,y)$, because $K$ is not normalized everywhere, but only along the limit; however, the existence of $f$ is enough for geometric purposes. For this reason, we still using the same name ``asymptotic coarse embedding'' as given by Willett in \cite{willett2015random}.

	\subsection{Summary of known results on spaces of graphs}

	In this section we give an overview of some results that link properties of the group (amenability and a-T-menability) with the geometric properties on the sequence of graphs (hyperfiniteness, property A and embeddability into a Hilbert space). The results presented here are taken from \cite{alekseev2016sofic},\cite{elek2006combinatorial},\cite{finn2014fibred},\cite{roe2003lectures} and \cite{willett2006some}.

	We start with the notion of hyperfiniteness for a sequence of graphs.
		\begin{defi}
		Let $\mathcal{X} = \{X_i\}_{i \in\N}$ be a sequence of finite graphs. The sequence $\{X_i\}_{i \in\N}$ is called \textit{hyperfinite} if for all $\varepsilon >0$, there is a $K_{\varepsilon} \in \N$ and a partition in connected components of the vertex sets $V(X_i)= A_1^i \cup A_2^i \cup \dots \cup A_{n_i}^i$ such that
		\begin{itemize}
			\item $|A_j^i|< K_\varepsilon$, for all $i \in \N$ and $1\leq j \leq n_i$;
			\item If $E_i^\varepsilon = \{(x,y)\in E(X_i); x \in A_n^i, y \in A_m^i, n\neq m\}$ is the set of edges connecting two different components then
			$$ \limsup_{i \rightarrow \infty} \dfrac{|E_i^\varepsilon|}{|V(X_i)|} \leq \varepsilon .$$
		\end{itemize}
	\end{defi}

	The idea of hyperfiniteness is that we can remove the edges in the set $E_i^\varepsilon$ of the sequence of graphs such that the remaining sequence of graphs $\mathcal{X}^\prime$ consists of connected components with size of at most $K_\varepsilon$. 

	For this section we fix a finitely generated residually finite group $\Gamma$ with a filtration of normal subgroups $\{\Lambda_i\}_{i\in \N }$. In a very combinatorial way, Elek proved in \cite{elek2006combinatorial} the following result. 
	
	\begin{prop}[{\cite[Proposition 1.4]{elek2006combinatorial}}]
		The box space $\square_{\Lambda_i} \Gamma$  is hyperfinite if and only if $\Gamma$ is amenable. 	
	\end{prop}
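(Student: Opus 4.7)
The plan is to establish both implications using the F\o{}lner characterization of amenability, with the quotient map $q : \Gamma \to \Gamma/\Lambda_i$ serving as a bridge to transport F\o{}lner sets between $\Gamma$ and its quotients.

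For \emph{amenable $\Rightarrow$ hyperfinite}, I would fix $\varepsilon > 0$ and invoke the Ornstein--Weiss $\varepsilon$-quasi-tiling theorem to produce finitely many F\o{}lner sets $F_1, \dots, F_k \subset \Gamma$ of controlled $S$-invariance, and set $K_\varepsilon := \max_\ell |F_\ell|$. For $i$ sufficiently large, $q$ is injective on each $S F_\ell F_\ell^{-1}$, so every $q(F_\ell)$ is F\o{}lner in the finite group $\Gamma/\Lambda_i$ with essentially unchanged boundary ratios. Applying the quasi-tiling inside $\Gamma/\Lambda_i$ gives pairwise-disjoint translates $\bar g_{\ell,j}\, q(F_\ell)$ covering all but at most $\varepsilon |V(X_i)|$ points, which I would place into singleton blocks. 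Arranging the $F_\ell$ to be connected (by enlarging them slightly if needed), the resulting partition consists of connected blocks of size $\leq K_\varepsilon$. The cross-edges then come from boundaries of the translates (total at most $\varepsilon|V(X_i)|$ by the F\o{}lner property) and edges incident to singletons (at most $d\varepsilon|V(X_i)|$ for degree bound $d$); a rescaling of $\varepsilon$ yields hyperfiniteness.

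For \emph{hyperfinite $\Rightarrow$ amenable}, I would fix $\varepsilon > 0$ and take the hyperfinite partition $V(X_i) = A_1^i \sqcup \cdots \sqcup A_{n_i}^i$ with $|A_j^i| < K_\varepsilon$ and cross-edge fraction at most $\varepsilon$ for large $i$. Since $\sum_j |\partial_S A_j^i| \leq 2 |E_i^\varepsilon| \leq 2\varepsilon |V(X_i)|$, a weighted-average pigeonhole argument produces some block $A_{j(i)}^i$ with $|\partial_S A_{j(i)}^i|/|A_{j(i)}^i| \leq 2\varepsilon$. For $i$ large, the map $q$ is an isometry on balls of radius exceeding $K_\varepsilon \geq \mathrm{diam}(A_{j(i)}^i)$, so $A_{j(i)}^i$ lifts to a subset $\tilde A_i \subset \Gamma$ carrying the identical local Cayley-graph structure and thus the same boundary ratio. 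Letting $\varepsilon \to 0$ along a sequence produces a F\o{}lner sequence in $\Gamma$, hence amenability.

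The main technical obstacle I expect is the quasi-tiling step in the forward direction: a single F\o{}lner set generally fails to tile $\Gamma/\Lambda_i$ by disjoint translates, so one needs the full Ornstein--Weiss $\varepsilon$-quasi-tiling theorem with finitely many F\o{}lner sets used simultaneously, together with a connectedness arrangement for the blocks. Elek's original argument in \cite{elek2006combinatorial} absorbs this quasi-tiling into a combinatorial construction tailored to Cayley graphs, thus avoiding an explicit appeal to Ornstein--Weiss. The reverse direction is more routine pigeonhole-plus-lifting; its only subtlety is the preservation of the boundary ratio under the lift, which the injectivity of $q$ on small balls handles.
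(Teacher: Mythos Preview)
The paper does not prove this proposition; it is quoted from Elek \cite{elek2006combinatorial} as background, so there is no ``paper's own proof'' to compare against. Your proposal stands on its own and is essentially correct in both directions.

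A few remarks. In the forward direction, invoking the full Ornstein--Weiss quasi-tiling machinery is legitimate but heavier than what the box-space situation actually requires: since $\Gamma/\Lambda_i$ is a \emph{group} on which $\Gamma$ acts transitively, translates of a single F\o{}lner set $q(F)$ already cover every vertex with constant multiplicity $|F|$, and a straightforward greedy or averaging argument extracts a disjoint subfamily covering a $(1-\varepsilon)$-fraction without needing several tiles $F_1,\dots,F_k$. This is closer in spirit to Elek's combinatorial argument and avoids the bookkeeping of multi-tile quasi-tilings. Your connectedness fix (enlarging the $F_\ell$ slightly) is harmless in a finitely generated group. In the reverse direction your pigeonhole-plus-lift argument is the standard one; just be explicit that the injectivity radius of $q$ on $\Gamma/\Lambda_i$ tends to infinity because $\bigcap_i \Lambda_i = \{e\}$, which is exactly where the residual finiteness hypothesis enters.

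Finally, note that the present paper's own machinery furnishes an independent route: box spaces are sofic approximations, so Theorem~\ref{mainthm} together with Corollary~\ref{amenableApli} yields the equivalence via measurable amenability of the boundary groupoid, bypassing any explicit tiling.
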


\subsubsection{Coarse properties and box spaces} 
	
 The following classical result relating box spaces and approximation properties is due to John Roe and Erik Guentner. Here  $\square_{\Lambda_i} \Gamma$ is the box space of $\Gamma$ with respect to a filtration $\{{\Lambda_i}\}_{i\in\N}$.
	
		\begin{prop}[{\cite[Proposition 6.3.3 and 6.3.4]{willett2006some}}]\label{boxspaceRoe}
			Let $\Gamma$ be finitely generated residually finite group.
			\begin{enumerate}[$(1)$]
				\item A finitely generated residually finite group $\Gamma$ is amenable if and only if the box space $\square_{\Lambda_i}\Gamma$ has Property A.
				\item 	If the box space $\square_{\Lambda_i} \Gamma$ is coarsely embeddable into a Hilbert space then $\Gamma$ is a-T-menable.
			\end{enumerate}
		\end{prop}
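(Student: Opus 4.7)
The plan for both parts is to transfer structure between $\Gamma$ and its finite quotients $\Gamma/\Lambda_i$, exploiting that for every fixed $R>0$ the quotient map $q_i\colon\Gamma\to\Gamma/\Lambda_i$ is eventually isometric on the ball $B_R(e)\subset\Gamma$ (the ``large injectivity radius'' coming from residual finiteness; write $R_i\to\infty$ for the injectivity radius of $q_i$). For the forward implication of $(1)$, assume $\Gamma$ is amenable. Given $\varepsilon>0$ and the symmetric generating set $S$, pick a F{\o}lner set $F\subset\Gamma$ with $|sF\triangle F|/|F|<\varepsilon$ for every $s\in S$. For each $i$ and each $x=q_i(g)\in\Gamma/\Lambda_i$, define $\eta_x\in\ell^1(X_i)$ to be the normalized pushforward of the indicator of $gF$; then $\supp(\eta_x)\subseteq B_{\diam(F)}(x)$, $\|\eta_x\|_1=1$, and for adjacent $x,y=sx$ one has $\|\eta_x-\eta_y\|_1\le|sF\triangle F|/|F|<\varepsilon$. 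Since $F$ works uniformly in $i$, this yields property A of $\square_{\Lambda_i}\Gamma$.

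For the converse of $(1)$, assume property A of $\square_{\Lambda_i}\Gamma$ witnessed by a radius $S$ and functions $\eta^i\colon X_i\to\ell^1(X_i)$. Use the $\Gamma/\Lambda_i$-action on $X_i$ by left translation to average:
\[ \bar\eta^i := \frac{1}{|X_i|}\sum_{x\in X_i}\lambda_{x^{-1}}\eta^i_x \in \ell^1(X_i), \]
which is supported in $B_S(e_i)$ (where $e_i:=q_i(e)$), has $\ell^1$-norm $1$, and by reindexing $x=ys$ satisfies $\|\lambda_s\bar\eta^i-\bar\eta^i\|_1<\varepsilon$ for every generator $s$. As soon as $R_i\ge S+1$, lifting $\bar\eta^i$ through $q_i$ produces $\tilde\eta^i\in\ell^1(\Gamma)$ supported in $B_S(e)$, with $\|\tilde\eta^i\|_1=1$ and $\|\lambda_s\tilde\eta^i-\tilde\eta^i\|_1<\varepsilon$ for every $s\in S$. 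This is the Reiter condition for $\Gamma$, so $\Gamma$ is amenable.

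For $(2)$, start from a normalized conditionally negative definite kernel $K\colon X\times X\to\R$ with control functions $\rho_1,\rho_2$, and symmetrize it along the $\Gamma/\Lambda_i$-action on $X_i$:
\[ \bar K_i(x,y) := \frac{1}{|X_i|}\sum_{z\in X_i} K(zx,zy). \]
This kernel is again normalized and c.n.d.\ (convex combinations preserve c.n.d.-ness), is sandwiched between $\rho_1\circ d_i$ and $\rho_2\circ d_i$, and is now left-$\Gamma/\Lambda_i$-invariant. Invariance converts $\bar K_i$ into a c.n.d.\ function $\psi_i\colon\Gamma/\Lambda_i\to\R$ via $\bar K_i(x,y)=\psi_i(x^{-1}y)$; lifting, $\tilde\psi_i:=\psi_i\circ q_i$ is c.n.d.\ on $\Gamma$ with $\rho_1(|g|)\le\tilde\psi_i(g)\le\rho_2(|g|)$ whenever $|g|\le R_i$. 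The pointwise limit along any non-principal ultrafilter $\omega\in\partial\beta\N$ then gives a c.n.d.\ function $\psi\colon\Gamma\to\R$ (pointwise limits preserve c.n.d.-ness, $\rho_2$ keeps the sequence locally bounded, and $R_i\to\infty$ makes the bound apply to every $g$ in the limit), and $\psi(g)\ge\rho_1(|g|)\to\infty$ makes $\psi$ proper, so $\Gamma$ is a-T-menable.

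The main obstacle in both parts is the passage from ``local data on $\Gamma/\Lambda_i$'' to ``global data on $\Gamma$''. The two ingredients to combine are the growing injectivity radius of $q_i$ and a uniform control on scale (the constant $S$ in property A, or $\rho_2$ in the coarse embedding). The most delicate step is the equivariant averaging: one symmetrizes over the $\Gamma/\Lambda_i$-action on $X_i$ to produce a genuinely equivariant object before lifting, and this relies on property A being compatible with convex combinations and on c.n.d.-ness being stable under both convex combinations and pointwise (ultra)limits.
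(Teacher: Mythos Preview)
The paper does not include a proof of this proposition; it is cited from \cite{willett2006some} as a background result, so there is no in-paper argument to compare against. Your proof is correct and follows the standard Roe--Guentner strategy: push F{\o}lner sets down to the quotients for the forward direction of (1), average the property-A (resp.\ c.n.d.) data over the finite quotient group $\Gamma/\Lambda_i$ to produce an equivariant object, and lift through $q_i$ using the growing injectivity radius.

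Two small bookkeeping remarks. In the forward direction of (1), your convention $y = sx$ combined with pushing forward $gF$ leads to comparing $sgF$ with $gF$, which is controlled by the F{\o}lner condition for the conjugate $g^{-1}sg$ rather than for $s$; switching to the right Cayley convention $y = xs$ (so that $|gsF \triangle gF| = |sF \triangle F|$), or equivalently pushing forward $Fg$, repairs this at once. In the converse of (1), the claim $\|\bar\eta^i\|_1 = 1$ uses that $\eta^i_x \ge 0$ (otherwise an average of unit $\ell^1$-vectors need not have unit norm); this positivity can always be arranged for property A and is standard. Neither point affects the substance of your argument.
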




		

	It is known that a-T-menability is not enough to imply coarse embeddability. There exist certain filtrations of the free group such that the associated box space is a sequence of expanders, so it can do not be coarsely embeddable into a Hilbert space. A good overview of this construction can be found on \cite{khukhro2014embeddable}. Even more, one can see in \cite{willett2012higher}, that $\square \SL_3(\Z)$ is not even fibred coarsely embeddable into a Hilbert space. On the other hand, a-T-menability is sufficient to imply the fibred coarse embeddability of the box space.

	\begin{prop}[{see \cite{chen2013characterization}, \cite{chen2013maximal} and \cite{finn2014fibred}}]
		The box space $\square_{\Lambda_i} \Gamma$ is fibred coarsely embeddable into a Hilbert space if and only if $\Gamma$ is a-T-menable.
	\end{prop}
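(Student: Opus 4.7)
The plan is to prove both implications via the characterization of a-T-menability in terms of the existence of a proper conditionally negative definite function $\psi:\Gamma\to\R$ with $\psi(e)=0$, and to transfer such data between $\Gamma$ and its box space $\square_{\Lambda_i}\Gamma$ using the key fact that for each fixed $R>0$ there exists $i_0$ such that for $i\geq i_0$ the ball of radius $R$ around any point in $X_i=\Gamma/\Lambda_i$ lifts isometrically to a ball in the Cayley graph of $\Gamma$. This ``asymptotic freeness'' of filtrations is precisely what connects local data on the box space with global data on $\Gamma$, and it matches the structure of fibred coarse embeddability, which only demands good behaviour of sections on sufficiently large balls away from a bounded set.

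For the implication ``$\Gamma$ a-T-menable $\Rightarrow\square_{\Lambda_i}\Gamma$ fibred coarsely embeddable'', I would start with a proper cnd function $\psi$ on $\Gamma$ with control functions $\rho_1,\rho_2$ and, via the GNS-type construction, the associated proper affine isometric action of $\Gamma$ on a Hilbert space $\mathcal{H}$ with cocycle $b:\Gamma\to\mathcal{H}$ satisfying $\|b(g)-b(h)\|^2=\psi(g^{-1}h)$. Set $\mathcal{H}_x=\mathcal{H}$ for every $x\in X=\square_{\Lambda_i}\Gamma$ and $s(x)=0$. Over any ball of radius smaller than the injectivity radius of the quotient $\Gamma\to X_i$ around a base point $x_0\in X_i$, choose a lift $\widetilde{B}\subset\Gamma$ and trivialise the field by mapping $y$ in the ball to $b(\widetilde y)-b(\widetilde{x_0})\in\mathcal{H}$. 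The compatibility of trivialisations on overlapping balls follows from the cocycle identity combined with left $\Gamma$-equivariance, and the control functions of $\psi$ transfer verbatim to the fibred embedding.

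For the converse, I would start from a fibred coarse embedding of $\square_{\Lambda_i}\Gamma$ with control functions $\rho_1,\rho_2$ and extract a proper cnd function on $\Gamma$ as follows. The local trivialisations on $X_i$ yield, for every $R$ below the injectivity radius, a cnd kernel $K_i^R$ on balls of radius $R$ bounded between $\rho_1\circ d$ and $\rho_2\circ d$. Pulling back via the local isometric lift to balls in $\Gamma$ and then averaging over the (free, transitive) $\Gamma$-action on $X_i$ produces a left-$\Gamma$-invariant cnd kernel on $B_R(e)\times B_R(e)\subset \Gamma\times\Gamma$, equivalently a cnd function $\psi_i^R:B_{2R}(e)\to\R$ satisfying the same bounds. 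Taking a diagonal ultralimit along a non-principal ultrafilter on $\N$, letting $R=R_i\to\infty$, produces a globally defined left-$\Gamma$-invariant cnd function on $\Gamma$ whose lower control function is still $\rho_1$, so it is proper, witnessing a-T-menability.

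The main obstacle lies in the converse: different pulled-back kernels on balls of $\Gamma$ must be made globally compatible so that the ultralimit yields a single cnd function on all of $\Gamma$ rather than merely a compatible family of germs. The averaging over the transitive $\Gamma$-orbit in $X_i$ is the key device, both to force left-invariance and to absorb the intrinsic ambiguity of the trivialisations appearing in fibred embeddability; one must check that averaging preserves conditional negative-definiteness, which holds by convexity of that class, and that the ultralimit preserves it as well (closedness under pointwise limits). Properness of the limiting function requires the lower control $\rho_1$ to survive both averaging and ultralimit, which ultimately relies on the isometric nature of the transition maps in the fibred structure; this is the delicate step and the one that fails for genuinely expander-like sequences, reconciling the construction with the known obstructions mentioned in the preceding paragraph.
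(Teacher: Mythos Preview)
The paper does not prove this proposition; it is quoted from \cite{chen2013characterization}, \cite{chen2013maximal} and \cite{finn2014fibred} without argument, so there is no in-paper proof to compare against.

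Your outline follows the direct approach of Chen--Wang--Wang and is sound. One technical point deserves care in the forward direction: the trivialisation over a ball with chosen lift must be the full affine isometry $v\mapsto\alpha(\widetilde y^{-1})v=\pi(\widetilde y)^{-1}(v-b(\widetilde y))$ coming from the affine action, not just a translation. Only then is the transition on an overlap given by $\alpha(\lambda^{-1})$ for a single deck transformation $\lambda\in\Lambda_i$, hence constant on connected components of the overlap as the definition of fibred coarse embeddability requires. Your formula $b(\widetilde y)-b(\widetilde{x_0})$ correctly describes the image of the zero section under such a trivialisation, which suffices for verifying the control functions, but it is not the fibrewise map itself. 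In the converse direction, the crucial fact that makes the averaged kernel left-invariant is precisely the compatibility axiom of the fibred embedding: the kernels coming from trivialisations centred at $x$ and at $xa^{-1}$ agree on their common domain because the trivialisations differ by an affine isometry of $\mathcal H$, and affine isometries preserve squared distances. You allude to this, but it is worth making explicit, since without it the change of variables $x\mapsto xa$ in the average does not yield invariance.

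An alternative route, closer to the machinery developed in this paper and carried out in \cite{finn2014fibred}, proceeds via the boundary groupoid: fibred coarse embeddability of $X$ is equivalent to topological a-T-menability of $\partial G(X)$, and for a box space one has $\partial G(\square_{\Lambda_i}\Gamma)\cong\partial\beta(\square_{\Lambda_i}\Gamma)\rtimes\Gamma$, reducing the statement to a-T-menability of $\Gamma$. Either approach is valid; yours is more hands-on, the groupoid route more structural.
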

	


	\subsubsection{Coarse properties and sofic approximations}
	
	 In this section we see which results of the last section can be generalized for a sofic approximation. Before showing the results, we would to point some basic information about soficity that will be use later.

	 First introduced by Gromov in \cite{gromov1999endomorphisms} as a generalization of amenability and residually finiteness and named by Weiss in \cite{weiss2000sofic} sofic groups become an important object of study in the past decades. 
	 
	 There are many ways to define the soficity of a group. Here we first take the characterization of a sofic group via the almost actions and after we see the graph version of it. A good survey on this topic is \cite{pestov2008hyperlinear}.

	  \begin{defi}\label{sofic}
	 	A discrete group $\Gamma$ is \textit{sofic} if, for every finite subset $F\subset\Gamma$ and all $\varepsilon>0$, there is a finite set $X$, a map $\sigma:\Gamma\to \Sym(X)$ and a subset $Y\subset X$ with $|Y|\geq (1-\varepsilon)|X|$ such that 
	 	\begin{align}
	 		\sigma(g)\sigma(h)(y)&=\sigma(gh)(y),  &\forall g,h \in F, y\in Y \\
	 		\sigma(g)(y)&\neq y, & \forall g \in F\backslash \{e\}, y\in Y.
	 	\end{align}
	 \end{defi}
	 
	 A map $\sigma$ that satisfies $(2.1)$ and $(2.2)$ as above is called \textit{$(F,\varepsilon)-$injective almost action}. A sequence of finite sets $X_i$ with an $(F_i,\varepsilon_i)-$injective almost action such that $F_i$ is a nested sequence of subsets that exhaust an infinte group $\Gamma$ and $\varepsilon_i$ tends to zero, when $i$ goes to infinity, is called a \textit{sofic approximation} of $\Gamma$. Notice that we are considering infinite groups, thus the cardinality of $X_i$ is growing along the sequence.

	 In the particular case where $\Gamma$ is a finitely generated sofic group with finite generating set $S$, we can construct an $S$-labeled graph using the sets $X_i$ of the sofic approximation as the vertex set and define the edges as the set such that every $x \in X_i $ is connected by the respective $\sigma_i(s)$ to $\sigma_i(s)x$, for all $s\in S$. Abusing the notation, we also call this sequence of graphs \textit{sofic approximation}. Moreover, by \cite[Lemma 2.19]{alekseev2016sofic}, we can assume that those graphs are connected for each $i\in\N$. With this construction, one obtains the following alternative definition of soficity:
	 
	 \begin{defi}
	 	Let  $\Gamma$ be a group generated by a finite set $S$. We said that $\Gamma$ is \textit{sofic} if there is a sequence $\{X_i\}_{i \in \N}$ of bounded degree finite $S$-labelled graphs such that $\{X_i\}_{i \in \N}$ Benjamini-Schramm converges to $(\Cay(\Gamma,S),e)$, where a sequence of graphs $\{X_i\}_{i \in \N}$ of bounded degree Benjamini-Schramm converges to a rooted graph $(Y,y)$, if for every $R>0$, the probability of the balls of radius $R$ along the sequence of graphs being graph isomorphic to the graph $(Y,y)$ is equal to one. More precisely, it satisfies 
	 	
	 	$$ \lim\limits_{i\to \infty} \dfrac{|\{x \in X_i; B_R^{X_i}(x) \cong (Y,y)|\}}{| V(X_i)|} =1 $$
	 	
	 \end{defi}
	
	Notice that a box space of finitely generated residually finite group, considered as a sequence of graphs, always Benjamini-Schramm converges to the respective Cayley graph. 

	Let us now fix a finitely generated sofic group $\Gamma$ with a sofic approximation $\mathcal{G}=\{X_i\}_{i\in\N}$. We denote by $(X,d)$ the respective space of graphs. We recall the following results from the literature.

   \begin{thm}\cite[Theorem 1.1]{alekseev2016sofic}\label{VadimTheo}
		Let $\Gamma$ be a finitely generated sofic group with a sofic approximation $\mathcal{G}$.
		\begin{enumerate}[$(1)$]
			\item If the sofic approximation $\mathcal{G}$ has property A then $\Gamma$ is amenable.
			\item If the sofic approximation $\mathcal{G}$ is asymptotically coarsely embeddable into a Hilbert space then $\Gamma$ is a-T-menable.
		\end{enumerate}
	\end{thm}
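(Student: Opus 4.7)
The plan is to transfer the coarse-geometric data from the sofic approximation $\{X_i\}_{i \in \N}$ to the Cayley graph $\Cay(\Gamma, S)$ via Benjamini--Schramm convergence and ultrafilter limits, producing the almost-invariant vectors or proper conditionally negative definite function on $\Gamma$ that characterize amenability, respectively a-T-menability. Fix a non-principal ultrafilter $\omega$ on $\N$, and for each $R > 0$ let $Y_{R,i} \subseteq X_i$ be the set of vertices whose $R$-ball is $S$-labelled isomorphic to the $R$-ball around $e$ in $\Cay(\Gamma, S)$; by Benjamini--Schramm convergence, $|Y_{R,i}|/|X_i| \to 1$ for every fixed $R$.

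For part $(1)$: given property A data $\eta^{(i)}: X_i \to \ell^1(X_i)$ with support in $S_\varepsilon$-balls and $\|\eta^{(i)}_x - \eta^{(i)}_y\|_1 < \varepsilon$ across edges, the idea is to transport $\eta$ to $\Gamma$ through good basepoints. Choose $R_i \to \infty$ with $R_i \gg S_\varepsilon$, and generic $x_i \in Y_{R_i, i}$; using the labelled isomorphism $B_{R_i}(x_i) \cong B_{R_i}(e)$, transport $\eta^{(i)}_{x_i}$ to $\tilde{\eta}^{(i)} \in \ell^1(\Gamma)$ supported in $B_{S_\varepsilon}(e)$. A careful computation --- combining the property A estimate across edges $(x, \sigma_i(s)x)$ with the fact that the almost-action $\sigma_i$ acts as honest left multiplication by $s \in S$ on the good ball, after averaging suitably over basepoints in $Y_{R_i, i}$ to symmetrize --- shows $\|\tilde{\eta}^{(i)} - s \cdot \tilde{\eta}^{(i)}\|_1 < C \varepsilon$ for a universal constant $C$ and every $s \in S$. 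Letting $\varepsilon \to 0$ produces a Reiter sequence on $\Gamma$, whence $\Gamma$ is amenable.

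For part $(2)$: given $(R_i)$-locally conditionally negative definite kernels $K_i$ on $X_i$ with common control functions $\rho_1, \rho_2$ and $R_i \to \infty$, choose $R_i' \to \infty$ with $R_i' \leq R_i$ and generic basepoints $x_i \in Y_{R_i', i}$. Define $\tilde{K}_i(g, h) := K_i(\sigma_i(g) x_i, \sigma_i(h) x_i)$ for $g, h \in B_{R_i'}(e) \subseteq \Gamma$, where $\sigma_i(g) x_i$ is defined via the $S$-labelled graph isomorphism on the good ball. Because any fixed finite subset of $\Gamma$ eventually sits inside $B_{R_i'}(e)$, the ultralimit $\tilde{K}(g, h) := \lim_\omega \tilde{K}_i(g, h)$ is defined on all of $\Gamma \times \Gamma$, inherits conditional negative definiteness from the $(R_i')$-local CND condition by passing to the limit on finite subsets, has $\tilde K(g,g) = 0$ by the limit-normalization of Definition \ref{asympEmb}, and satisfies $\rho_1(|g^{-1}h|_S) \leq \tilde{K}(g,h) \leq \rho_2(|g^{-1}h|_S)$. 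A left-averaging step --- either by replacing $x_i$ with a uniformly random basepoint in $Y_{R_i', i}$, or by symmetrizing $\tilde{K}$ via the left $\Gamma$-action on the limit --- yields a left-invariant proper CND kernel, from which $\psi(g) := \tilde{K}(e, g)$ is a proper CND function on $\Gamma$; hence $\Gamma$ is a-T-menable.

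The main technical obstacle in both parts is the passage from a kernel or function that is ``translation-invariant'' only through the almost-action $\sigma_i$ based at a single $x_i$ to one that is genuinely left-$\Gamma$-invariant on the limit. Two facts are combined to overcome it: on good balls $\sigma_i$ restricts to an honest local action, making the transport $\Gamma$-equivariant up to the good radius; and averaging over the abundant set $Y_{R_i, i}$ (of full density in $X_i$) absorbs the negligible deviation from strict equivariance. In part $(1)$, this averaging must be incorporated \emph{before} the $\ell^1$-comparison between $\tilde\eta$ and its left translate $s \cdot \tilde\eta$, since property A provides closeness only across edges of $X_i$ and not between a function and its translate by $\sigma_i(s)$; in part $(2)$, it ensures that the ultralimit kernel factors through the group multiplication $(g, h) \mapsto g^{-1} h$, which is exactly what is needed to pass from a proper CND kernel to a proper CND function witnessing a-T-menability.
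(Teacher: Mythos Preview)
This theorem is not proved in the present paper; it is quoted from \cite[Theorem 1.1]{alekseev2016sofic} without argument. The proof in that reference proceeds via the coarse boundary groupoid rather than by direct transport: property~A of the space of graphs $X$ implies that $\partial G(X)$ is topologically amenable (Theorem~\ref{Rufus}(1)); for a sofic approximation one has a measurable isomorphism, on a conull invariant subset, between $(\partial G(X),\hat\mu_\omega)$ and the crossed product $(\partial\beta X,\mu_\omega)\rtimes\Gamma$; and amenability of this crossed product together with the $\Gamma$-invariant probability measure on the base forces $\Gamma$ to be amenable. Part~(2) is parallel, going through topological a-T-menability of $\partial G(X)$ (Theorem~\ref{Rufus}(2)) and the same measurable isomorphism.

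Your route is genuinely different and more elementary: you bypass the groupoid entirely and work directly with Benjamini--Schramm convergence, transporting the property-A functions (resp.\ the locally CND kernels) through labelled ball isomorphisms at good basepoints and then averaging or taking an ultralimit. The groupoid approach has the advantage that the single structural statement ``$\partial G(X)\cong \partial\beta X\rtimes\Gamma$ almost everywhere'' absorbs both the basepoint issue and the passage to honest $\Gamma$-invariance, after which one simply invokes standard permanence results for crossed products. Your approach trades this for a hands-on computation; the outline is sound, and you have correctly located the one real difficulty in the final paragraph. In a complete proof that averaging step would need to be written out: in part~(1) one must verify that averaging the transported $\tilde\eta^{(i,x)}$ over $x\in Y_{R_i,i}$ still yields a normalized, compactly supported $\ell^1$-function while turning the edge estimate $\|\eta_x-\eta_{\sigma_i(s)x}\|_1<\varepsilon$ into a genuine Reiter estimate $\|\bar\eta^{(i)}-s\cdot\bar\eta^{(i)}\|_1<C\varepsilon$; in part~(2) one must check that the ultralimit kernel really factors through $g^{-1}h$, which amounts to showing that changing the basepoint $x_i$ to $\sigma_i(t)x_i$ alters $\tilde K_i(g,h)$ only on a $\mu_i$-negligible set of basepoints, so that the average (or a generic choice) is left-invariant.
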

	
	\begin{prop}\cite[Theorem 0.7]{kaiser2019combinatorial} The sofic approximation
		$\mathcal{G}$ is hyperfinite if and only if the group $\Gamma$ is amenable.
	\end{prop}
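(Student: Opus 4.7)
The plan is to exploit Benjamini--Schramm convergence of the sofic approximation $\mathcal{G}=\{X_i\}_{i\in\N}$ to $\Cay(\Gamma,S)$: for every $R>0$ the proportion of ``good'' vertices $v\in V(X_i)$ whose $R$-ball is rooted-isomorphic to the $R$-ball around $e\in\Cay(\Gamma,S)$ tends to $1$. Both implications are then proved by transporting finite sets between $X_i$ and $\Gamma$ through these local isomorphisms, combined with averaging arguments on the hyperfinite partition and with an Ornstein--Weiss quasi-tiling of $\Gamma$ respectively.

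For $(\Rightarrow)$, suppose $\mathcal{G}$ is hyperfinite. Fix $\varepsilon>0$ small and take the hyperfinite partition $V(X_i)=\bigsqcup_j A_j^i$ with $|A_j^i|\le K_\varepsilon$ and $|E_i^\varepsilon|\le\varepsilon|V(X_i)|$, setting $R=K_\varepsilon+1$. A double averaging estimate, combined with the density-$(1-\varepsilon)$ of good vertices from BS--convergence, produces a piece $A$ of boundary ratio $\le\sqrt\varepsilon$ containing at least one good vertex $v$: pieces consisting only of bad vertices have total mass $\le\varepsilon|V(X_i)|$, while pieces of boundary ratio $>\sqrt\varepsilon$ have total mass $\le 2\sqrt\varepsilon|V(X_i)|$. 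Since $\diam A<R$, every edge of $X_i$ incident to $A$ sits inside $B_R^{X_i}(v)$, so the local isomorphism ships $A$ to a finite subset $F\subset\Gamma$ with $|F|=|A|$ and $|\partial F|/|F|\le\sqrt\varepsilon$. If $\Gamma$ were non-amenable the Cheeger constant $h(\Gamma)>0$ of $\Cay(\Gamma,S)$ would force $\sqrt\varepsilon\ge h(\Gamma)$, impossible for small $\varepsilon$; hence $\Gamma$ is amenable.

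For $(\Leftarrow)$, assume $\Gamma$ amenable. Given $\varepsilon>0$, invoke the Ornstein--Weiss quasi-tiling theorem inside $\Gamma$ to obtain finitely many F\o{}lner sets $T_1,\dots,T_m\subset\Gamma$ of common diameter $\le D$ and boundary ratio $\le\varepsilon$, whose disjoint translates $(1-\varepsilon)$-quasi-tile $\Gamma$. Applying BS--convergence at radius $R=D+1$, copies of the $T_j$'s can be read off in $X_i$ around each good vertex; an Ornstein--Weiss greedy packing run directly in $X_i$, restricted to the very-good subset whose $2R$-ball is Cayley-isomorphic, then places a disjoint family of $T_j$-copies covering all but an $O(\varepsilon)$-fraction of $V(X_i)$. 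Completing the partition by assigning the leftover vertices to singleton blocks yields a hyperfinite decomposition with block size $\le\max_j|T_j|$ and cross-edge count $O(\varepsilon)|V(X_i)|$.

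The main obstacle is precisely this transfer of Ornstein--Weiss from $\Gamma$ to the finite graphs $X_i$: one has to run the greedy procedure so that $T_j$-copies that look disjoint inside individual Cayley-patches are actually disjoint in $X_i$, and so that the $X_i$-boundaries of the chosen blocks coincide with their Cayley counterparts. Restricting attention to vertices whose $2R$-balls are Cayley-isomorphic is what makes the bookkeeping work; the details in the sofic setting are carried out in \cite{elek2006combinatorial} and \cite{kaiser2019combinatorial}.
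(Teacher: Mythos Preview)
The proposition is quoted from \cite{kaiser2019combinatorial} without proof at the point where it appears, so strictly speaking there is no ``paper's own proof'' to match at that spot; your outline is essentially the direct combinatorial argument of Elek and Kaiser that the paper is citing. The paper does, however, rederive the statement by a genuinely different route later on: by Theorem~\ref{mainthm} hyperfiniteness of $\mathcal G$ is equivalent to measurable amenability of $(\partial G(X),\hat\mu_\omega)$ for every $\omega$, and Corollary~\ref{amenableApli} then links this to amenability of $\Gamma$ via the a.e.\ isomorphism $(\partial G(X),\hat\mu_\omega)\cong(\partial\beta X,\mu_\omega)\rtimes\Gamma$ from \cite{alekseev2016sofic} together with the standard fact that a crossed-product groupoid carrying an invariant probability measure is measurably amenable iff the acting group is. In effect the paper trades the Ornstein--Weiss quasi-tiling step, which is the hard direction in your sketch, for the Connes--Feldman--Weiss theorem (Theorem~\ref{CFW}) plus the structural groupoid isomorphism. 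Your approach stays entirely on the finite-graph side and is more elementary and self-contained; the paper's approach is less explicit but uniform with its treatment of the a-T-menable case and avoids rerunning a tiling argument inside each $X_i$.

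Your forward direction is correct. One point worth making explicit is that the hyperfinite pieces may be assumed connected without loss of generality, since splitting a piece into the connected components of its induced subgraph creates no new cross-edges; you are tacitly using this when you bound $\diam A<R=K_\varepsilon+1$. For the backward direction you have correctly located the real work, namely running the Ornstein--Weiss packing inside $X_i$ so that tiles which look disjoint in local Cayley patches are genuinely disjoint in the graph, and you defer to \cite{elek2006combinatorial} and \cite{kaiser2019combinatorial} for the bookkeeping. That is a fair outline, but as written it is a sketch rather than a proof: the $2R$-ball condition on ``very good'' vertices and the greedy procedure need to be spelled out before one can read off the $O(\varepsilon)$ bound on cross-edges.
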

	
	Even more, in the same theorem, Kaiser proved that it is also equivalent to the sequence of graphs have property almost-A, that is a measurable version of property A (see Definition \ref{propertyalmostAdef}). In this work we systematicalle study measurable versions of property A and asymptotic coarse embeddability by relating them to measure-theoretic properties of the boundary groupoid.

	\section{Coarse groupoids and their properties}\label{chapGroupoid}
	
	The aim of this section is to present the necessary definitions and results related to the coarse groupoid and one reduction of it (coarse boundary groupoid). Some notion of coarse geometry will be used here. For an introduction to coarse geometry we recommend \cite{roe2003lectures} or \cite{bunn2011bounded}. 
	
	The coarse boundary groupoid will be fundamental for our main results of this work. The coarse boundary groupoid of a space of graphs can be related to the ultraproduct of the sequence of graphs, as we will see on the next subsection. This fact help us to define a measure on this groupoid. The coarse boundary groupoid is a topological object, but if we add a measure on its space of objects, the coarse boundary groupoid can be seen as a measurable groupoid using the fact that it has a ultraproduct point of view.
	
	For this reason we introduce here the set up for the measurable case as well. Even more, we extend the group properties, like amenability and a-T-menability, to the groupoid, in both situations: topologically and measurably. The ultraproduct set up will appear only on the next section.
	
	\subsection{Groupoids and their properties}
	
\subsubsection{Topological groupoids}
	A groupoid is an algebraic structure that generalizes the notion of the group. There are different ways to define a groupoid using algebraic relations or categorical ones, for example. For more information about this object see \cite{anantharaman2001amenable}, \cite{brown2008textrm} or \cite{renault2006groupoid}. We start with some examples of topological groupoids.

	\begin{ex}
		Let $X$ be a locally compact topological space and $\Gamma$ a group acting on the space $X$. Then, $ G^{(1)} = X \times \Gamma$ is a groupoid with the space of objects $X$ and maps give by 
		$$ s(x,g)=  x g \hspace{2cm} r(x,g)= x,$$ 
		and the operation $(x,g)\cdot (xg,h)= (x,gh)$. This groupoid is called \textit{crossed product groupoid} of $X$ by $\Gamma$ and denoted by $X\rtimes \Gamma$.
	\end{ex}

	\begin{ex}
		Let $X$ be a topological locally compact space. Take $G^{(1)}= X \times X$, $ G^{(0)}= X $ and the maps $\pi_2$ and $\pi_1$ the projections on the second and first coordinates as the source and the range map, respectively. That means 
		$$ s(x,y)= \pi_2(x,y)=y $$
		$$r(x,y)=\pi_1(x,y)=x $$
		
		So, the natural choice for the multiplication is the operation $\cdot: G^{(2)} \to G^{(1)} $ given by $(x,y)\cdot (y,z)= (x,z) $.
		This is called the \textit{pair groupoid}.
	\end{ex}

	Every groupoid induces an equivalence relation on the set of objects $G^{(0)}$. Given a groupoid $G$, we define a relation $x \sim y$ in $G^{(0)}$ if and only if there is $g \in G^{(1)}$ such that $ s(g)=x, r(g)=y $, i.e, if exist some arrow connecting $x$ to $y$. By the definition of the groupoid, this is an equivalence relation. The relation set is given by
	$$R_G \colon = \{(x,y)\in G^{(0)}\times G^{(0)}; x\sim y  \} = \{(s(g),r(g))\in G^{(0)}\times G^{(0)};g \in G^{(1)} \}.$$  
	
	We can also induce a surjective groupoid homomorphism between the groupoid $G$ and the related equivalence relation $R_G$ by $$\pi: G \twoheadrightarrow R_G$$ $$\hspace{1.6cm} g\mapsto (s(g),r(g)).$$
	
	The kernel $\ker \pi = \{g \in G; s(g)=r(g)\} $ of this homomorphism is a subgroupoid. We have an isomorphism between $G$ and $R_G$, if and only if $\ker \pi$ is trivial. Moreover, there is an exact sequence $ G_{\stab} \hookrightarrow G \twoheadrightarrow R_G$, given by the inclusion, where $G_{\stab}$ is the stabilizer set $\{g\in G^{(1)}; g \cdot x = x, \forall x \in G^{(1)}\}$. Thus, $G\cong R_G / G_{\stab}$. This quotient and the isomorphism are a bit more complicated, but in the case of a groupoid with the pair operation, the stabilizer is trivial, that means, $G\cong R_G$. For the details of this construction in the general case, see \cite[section 1]{goehle2009groupoid}.

	\subsubsection{Measured groupoids}
	
	In this section, we introduce the basic terms of a measurable groupoid, one can see more about it on \cite{sauer2002l2} or \cite{anantharaman2001amenable}. As we commented in the introduction of this section, we can attach a measure on the coarse boundary groupoid, and in this case, the groupoid will be a measurable groupoid. We expect that the reader knows a little about Measure Theory. If not, a good introduction to  this area can be found in \cite{roberts2001measure}.
	
	\begin{defi}
		A discrete \textit{measurable groupoid} $G$ is a groupoid with a measure space structure on $G^{(1)}$ such that the composition map on $G^{(1)} \times G^{(1)}$ and the inverse map on $G^{(1)}$ are measurable maps and $s^{-1}(x)$ is countable, for all $x\in G^{(0)}$ (or, $r^{-1}(x)$ is countable). In this case, the source map and the range map are measurable maps and $G^{(0)}$ is a measurable subset of $G^{(1)}$ via the inclusion map. \end{defi}

	If we have a measurable groupoid $G$ with a measure $\mu$ on the space of objects $G^{(0)}$, we can define the \textit{left counting measure} $\mu_s$ induced by $\mu$ in $G^{(1)}$ on all measurable sets $A\subseteq G^{(1)}$ as
	$$ \mu_s(A) \coloneqq \int_{G^{(0)}}  |s^{-1}(x)\cap A|  \hspace{0.2cm}d\mu(x).$$
	
	The same holds for the \textit{right counting measure} $\mu_r$ of $\mu$ switching the source map with the range map. We call a measure on $G^{(0)}$ \textit{invariant} if $\mu_s=\mu_r$. Since this measure is now defined in $G^{(1)}$, this lift is denoted by $\hat{\mu}$.

	\begin{defi}\label{measured groupoid}
		A \textit{measured groupoid} $(G,\hat{\mu})$ is a measurable groupoid with an invariant measure $\mu$ on $G^{(0)}$, i.e, $\mu_s=\mu_r=\mu$, where $\hat{\mu}$ is the lift of $\mu$ to the set of morphisms.
	\end{defi}
	
	One can find in \cite{anantharaman2001amenable} a more general theory for measured groupoids. For them, a \textit{measured groupoid} is a triple $(G,\lambda^x,\mu)$, where $G$ is a measurable groupoid, $\{\lambda^x\}$ is the Haar system (i.e, a family of probability measures $\{\lambda^x\}_{x \in G^{(0)}}$ such that the support of $\lambda^x$ is contained in the range fiber $G^x=\{g\in G^{(1)}; r^{-1} (x)=g\}$) and $\mu$ is an invariant measure. But since we will work with discrete measurable groupoids that have a family of counting measures as the Haar system, we omitted the family $\{\lambda^x\}_{x \in G^{(0)}}$ of probability measures.
	
	\subsubsection{Amenable and a-T-menable groupoids}

	We introduced amenability and a-T-menability for discrete groups on the first section. In this section, we extend those properties for groupoids. Unlikely for groups, we have a topological and measurable version of those properties.

	\begin{defi}
		A discrete groupoid $G$ with compact space of objects is \emph{topologically amenable} if there is a sequence of continuous functions $\phi_n: G \to \R$ with compact support such that:
		\begin{enumerate}[(1)]
			\item for all $n \in \N$ and $y\in G^{(0)}$, we have $$\sum_{x\in G^y} \phi_n(x)=1 ;$$
			\item the sequence $$\sum_{y\in G^{s(x)}} |\phi_n(xy)-\phi_n(y)|$$ tends to zero uniformly for all $x$ in compact set of $G$.
		\end{enumerate}
	\end{defi}

	\begin{rem}
		One can define a topologically amenable groupoid as a groupoid with an invariant mean, as we did for groups.
	\end{rem}
	
	We take the definition of measurable amenability of groupoids, known as Weak Reiter's Condition, described in \cite{anantharaman2001amenable} to compare with the above topological version.
	
	\begin{defi} \label{AmeGroupoid}
		Let  $(G,\hat{\mu})$ be a measured groupoid with the measure $\mu$ in $G^{(0)}$. We say that the groupoud $G$ is \textit{measurably amenable} if there is sequence $(\phi_n)_{n\in\N}$ of functions in $L^\infty(G^{(0)},\ell^1(G,\lambda^x))$ such that
		
		\begin{enumerate}[(1)]
			\item for all $x \in G^{(0)}$ and $n\in \N$, each function $\phi_n$ is normalized, i.e, $$ \sum_{x \in G^y}(\phi_n(x))  =1;$$
			\item for all $f\in L^1(G)$ and $x\in G^{(1)}$, 
			$$\lim_{n\to \infty}  \sum_{y\in G^{x}} \sum_{z\in G^{s(y)}} f(x) |\phi_n(yz)-\phi_n(z)|=0.$$
		\end{enumerate} 	
	\end{defi}

	We have some trivial examples of amenable groupoids coming from well-known examples of amenable groups.  In the case where the group $\Gamma$ is amenable then the crossed product groupoid $X\rtimes \Gamma$ is amenable. The converse is not always true but if $X\rtimes \Gamma$ is amenable and $X$ has a $\Gamma-$invariant measure then $\Gamma$ is amenable as noticed in the Example 2.7.(2) of \cite{anantharaman2002amenability}.
	
	Now we can focus on a-T-menability. We saw the similar definition for groups but we write it here for groupoids to make clear where the elements and maps are situated. For a general view to this property on groupoids we recommend \cite{anantharaman2013haagerup}.

	\begin{defi}
		Let $G$ be a groupoid. A map $\psi:G \to \R$ is a \emph{conditionally negative definite function} if it is:
		\begin{enumerate}[$(i)$]
			\item \emph{Normalized on the diagonal}, i.e, $\psi (x,x)=0$, for all $x\in G^{(0)}$;
			\item \emph{Symmetric}, i.e, for all $y \in G^{(1)}$, we have  $\psi(y)=\psi(y^{-1});$
			\item \emph{Conditionally negative type}, i.e, for all $x\in G^{(0)}$, given any points $y_1,\dots, y_n \in G^x$ on the range fiber of $x$ and for any real numbers $\lambda_1,\dots,\lambda_n$ such that $\sum\limits^n_{i=1}\lambda_i = 0$, $\psi$ satisfies $$\sum_{i,j=1}^n  \lambda_i \lambda_j \psi(y_i^{-1}y_j) \leq 0 .$$ 
		\end{enumerate}
	\end{defi}
	
	%
	
	\begin{defi}\label{aTmean}
		
		\begin{enumerate}[$(1)$]
			\item \label{aTmeanTOP} A topological locally compact Hausdorff groupoid $G$ is a-T-menable if there is a proper continuous conditionally negative definite function $\psi: G\to \R$. Recall that \emph{proper} means that the preimage of compact sets is compact. 
			
			\item \label{aTmenaMEA} A measured groupoid $(G,\hat{\mu})$ is a-T-menable if there is a measurably proper conditionally negative definite function $\psi: G\to \R$. Now, the map $\psi $ is a \emph{measurably proper} if, for all $C>0$, the measure $\hat{\mu}(\{g\in G; \psi(g)\leq C\})$ is finite.
		\end{enumerate}
	\end{defi}
	
	Like for groups, given a topological groupoid that attached with a measure is also a measured groupoid, we have that if this groupoid is topologically a-T-menable then it is also measurably a-T-menable and the same is valid for amenability. 
	
	Similarly with the amenable example, if we have a discrete a-T-menable group $\Gamma$ acting in a measurable space $(X,\mu)$, then the crossed product groupoid $X\rtimes\Gamma$ is a-T-menable. Indeed, if there is a conditionally negative definite function $\psi: \Gamma\to \R$. Thus, we can extend it to $\psi': X\rtimes\Gamma\to \R$
	given by $\psi'(x,g)=\psi(g)$  that is also a proper conditionally negative definite function on the groupoid. The converse is not always true, but if $X\rtimes\Gamma$ is a measurably a-T-menable groupoid such that the action from $\Gamma$ on $X$ is free and preserves the measure then $\Gamma$ is an a-T-menable group.
	
	\subsection{Coarse groupoids}
	
	In this section we finally define the coarse groupoid and its reduction, the coarse boundary groupoid. For both objects, we need some coarse geometry properties to realize that the coarse groupoid carries coarse properties of the original space, as noticed in \cite{skandalis2002coarse}.
	
\subsubsection{The coarse groupoid}

	Let $X$ be an uniformly discrete metric space of bounded geometry. Consider the collection of sets $$E_R=\{ (x,y) \in X\times X; d(x,y)\leq R\},$$ for every $R>0$. Those $E_R$'s are also known by the name of \textit{entourages}. We take $\mathcal{E}$ to be the coarse structure generated by this collection. This coarse structure $\mathcal{E}$ is called the \textit{metric coarse structure}.
	
	Define 
	
	$$G(X) = \bigcup_{R>0} \overline{E_R}^{\beta X \times \beta X},$$
	where the closure of $E_R$ is the Stone-Čech compactification in $\beta X \times \beta X$. This compactification is first inside $\beta(X\times X)$, since $E_R\subseteq X\times X$, but we can extend the inclusion $E\to X\times X$ to an injective homeomorphism $\overline{E_R} \to \beta X \times \beta X$ via the universal property of the Stone-Čech compactification. This is essential to define a partial operation on $\beta X \times \beta X$. For the details on this construction see \cite[section 10.3]{roe2003lectures}.

	\begin{defi}
		Let $G(X)$ be the set defined above. The \emph{coarse groupoid} is given by $G^{(1)}=G(X)$ with the pair groupoid operation and the projections $\pi_2$ and $\pi_1$ as the respective source and range maps over $G^{(0)}= \beta X$. We will keep denoting it by $G(X)$. 
	\end{defi} 
	
	By results of \cite{skandalis2002coarse}, the coarse groupoid $G(X)$ is an locally compact Hausdorff étale topological groupoid. 

	The coarse groupoid captures the coarse properties of the related metric space in the following sense:
	
	\begin{thm}[{\cite[Theorem 5.3 and Theorem 5.4]{skandalis2002coarse}}]\label{SkandalisTuYu} 
		Let $X$ be an uniformly discrete space with bounded geometry.\begin{enumerate}[$(1)$]
			\item The space $X$ has property A if and only if the topological coarse groupoid $G(X)$ is topologically amenable.
			\item The space $X$ admits a coarse embedding into a Hilbert space if and only if the topological coarse groupoid $G(X)$ is topologically a-T-menable.
		\end{enumerate}
	\end{thm}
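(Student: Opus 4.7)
The plan is to establish both parts via the same template: transfer back and forth between functions/kernels on $X\times X$ and continuous functions on $G(X)$, exploiting that $G(X)=\bigcup_{R>0}\overline{E_R}^{\beta X\times\beta X}$ is covered by Stone-Čech closures of the entourages. By the universal property of $\beta X$, any bounded function $f:E_R\to\R$ extends uniquely to a continuous function on $\overline{E_R}$; and since $X\times X$ is dense in each $\overline{E_R}$ and the algebraic constraints involved (sums, symmetry, negative-type inequalities) are closed under pointwise limits along nets, the relevant properties transport in both directions.

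For part $(1)$, if $\mathcal{X}$ has property A with parameter $S>0$ and function $\eta:X\to\ell^1(X)$, I would define $\phi(g,h):=\eta_h(g)$ on $X\times X$. This is supported in the entourage $E_S$ and bounded, so it extends to a continuous compactly-supported $\tilde\phi$ on $G(X)$. The normalization $\|\eta_h\|_1=1$ becomes $\sum_{x\in G^h}\tilde\phi(x)=1$, valid on $X$ and extended by continuity to all of $\beta X$; the edge-variation bound extends to the uniform variation bound along the compact $\overline{E_1}$. Conversely, one restricts the continuous normalized $\phi_n:G(X)\to\R$ witnessing topological amenability, with support in some $\overline{E_{S_n}}$, to $X\times X$ and sets $\eta_h^{(n)}(g):=\phi_n(g,h)$; normalization and support (inside $B_{S_n}(h)$) are immediate, and given $\varepsilon>0$ one picks $n$ large enough so that the $\phi_n$-variation on the generating compact $\overline{E_1}$ is below $\varepsilon$.

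For part $(2)$, starting from a normalized conditionally negative definite kernel $K:X\times X\to\R$ with controls $\rho_1(d(x,y))\leq K(x,y)\leq\rho_2(d(x,y))$, the upper bound makes $K$ bounded on every $E_R$, so $K$ extends continuously to $\tilde K:G(X)\to\R$. Symmetry and diagonal-normalization pass to $\tilde K$ by continuity, and the negative-type inequality on any finite tuple in a range fiber of $G(X)$ follows by approximating this tuple by finite tuples sitting in $E_R\subset X\times X$. Properness is immediate from the lower bound: for each $C>0$ the sub-level set $\{\tilde K\leq C\}$ lies in some $\overline{E_R}$ with $\rho_1(R)>C$, which is compact. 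Conversely, given a proper continuous conditionally negative definite $\psi$ on $G(X)$, I would restrict $\psi$ to $X\times X$ to obtain a kernel $K$; $\rho_2(R):=\sup_{\overline{E_R}}\psi$ is finite by compactness, and $\rho_1(R):=\inf\{K(x,y):d(x,y)\geq R\}$ must tend to infinity, for otherwise a sequence of distant pairs would give a non-compact sub-level set of $\psi$.

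The main obstacle is transporting finite-sum inequalities and fiberwise normalizations from $X$ to points $y\in\beta X\setminus X$, because the range fiber $G^y$ is not a fiber of $X\times X$ but rather consists of ``nonstandard'' limits of pairs over $y$. The key technical ingredient is that any finite tuple $g_1,\dots,g_n\in G^y$ can be simultaneously approximated by tuples lying in a common entourage $E_R\subset X\times X$: each $g_i$ sits in some $\overline{E_{R_i}}$ and is a net-limit of elements of $E_{R_i}$, so taking $R=\max_i R_i$ and refining nets produces a simultaneous approximation in $(E_R)^n$. Once this approximation lemma is in hand, both amenability and a-T-menability of the groupoid translate faithfully into property A and coarse embeddability of $X$ into a Hilbert space, respectively.
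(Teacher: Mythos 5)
First, a point of comparison: the paper does not prove this statement at all --- it is quoted from Skandalis--Tu--Yu \cite{skandalis2002coarse} (Theorems 5.3 and 5.4 there) --- so there is no in-paper argument to measure you against; you are reconstructing the standard proof, and your overall strategy (extend bounded functions and kernels from the entourages $E_R$ to their Stone--\v{C}ech closures $\overline{E_R}$, and transport normalization, variation and negative-type conditions across the dense inclusion $E_R\subseteq\overline{E_R}$) is indeed the right one. Two smaller issues first: with the paper's conventions $r=\pi_1$, $s=\pi_2$, setting $\phi(g,h)=\eta_h(g)$ makes the range-fibre sum over $G^h$ equal to $\sum_{y}\eta_y(h)$ rather than $\|\eta_h\|_1$, so you need to swap coordinates or work with source fibres; and for a general uniformly discrete bounded geometry space (which is the generality of the statement) property A requires the variation bound over \emph{every} entourage $E_R$, so controlling only $\overline{E_1}$ is not enough --- that shortcut is valid for graph or geodesic spaces only.

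The genuine gap is in the proof of what you correctly identify as the key technical ingredient. ``Taking $R=\max_i R_i$ and refining nets'' does produce a tuple of nets in $(E_R)^n$ converging to $(g_1,\dots,g_n)$, but not one of the required form: for the products $g_i^{-1}g_j$ to be limits of products of the approximants, the $n$ approximating pairs at each stage must be composable, i.e.\ must all be anchored at a \emph{single} point of $X$ converging to $y$, and nothing in refining $n$ independent nets forces their anchor coordinates to agree. This is precisely where bounded geometry is used: decompose each $E_R$ into finitely many partial translations (graphs of partial bijections $t\colon\dom t\to\ran t$), so that $\overline{E_R}=\bigsqcup_j\overline{t_j}$ with each piece clopen; then each $g_i\in G^y$ lies in the closure of a unique $t_i$, and for a single net $x_\alpha\to y$ in $X$ the tuple $\bigl((t_i(x_\alpha),x_\alpha)\bigr)_{i=1}^n$ converges simultaneously to $(g_1,\dots,g_n)$ with a common anchor. (In this paper the same role is played by the ultraproduct description of the fibres, Proposition \ref{isometric}.) Once that lemma is correctly established, the rest of your argument goes through, including the properness transfer in part $(2)$, where sublevel sets of $\tilde K$ are closed subsets of some compact open $\overline{E_R}$ because $\rho_1\to\infty$. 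Note finally that, in contrast with the boundary-groupoid statement proved in Section 5 of the paper via Tietze extension and a spectral perturbation of the lifted kernels, no perturbation is needed here: the fibres of $G(X)$ over points of $X$ are honest subsets of $X\times X$, so the restriction of $\psi$ is exactly, not just asymptotically, conditionally negative definite.
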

	

	\subsubsection{The coarse boundary groupoid}

	On the construction of the coarse groupoid we start with a uniformly discrete metric space of bounded geometry $X$. Notice that $X$ is an open subset of the compactification $\beta X$. Then, $X$ is a saturated subset of $G^{(0)}$, i.e, for every $g \in G^{(1)}= \beta X \times \beta X$ with $s(g)\in X$, we have $r(g)\in  X$. So, we can restrict the coarse groupoid, taking $G^{(0)}= X$ and  $$G^{(2)}\big|_X=\{(g,g')\in G^{(2)}; s(g), r(g)=s(g'), r(g') \in X\}.$$ 
	Naturally, we can also reduce this groupoid doing the restriction to the Stone-Čech boundary $\partial\beta X$, as $\beta X = X \sqcup \partial \beta X$.
	
	\begin{defi}
		Given a uniformly discrete metric space of bounded geometry $X$. We denote the reduction of the coarse groupoid $G(X)|_{\partial \beta X}$ associated to $X$ by $\partial G(X)$ and call it \textit{coarse boundary groupoid}.
	\end{defi}

	\begin{ex}
		According to \cite[Proposition 2.5]{finn2014spaces}, if a finitely generated discrete group $\Gamma$ acts on a uniformly discrete metric space of bounded geometry $X$ such that the induced action on $\beta X$ is free on $\partial \beta X$ and the action generates the metric coarse structure at infinity, (i.e, if for every $R>0$, there is $S_1,\cdots,S_n\in \mathcal{E} = \{E_g\}_{g\in \Gamma}= \{(x,xg); x\in X\}_{g\in \Gamma}$ and a finite subset $F\subset X\times X$ such that $E_R\subseteq (\cup_{i=1}^n S_n) \cup F$), then $\partial G(X)\cong \partial \beta X \rtimes \Gamma$. If we take a finitely generated residually finite group $\Gamma$, we know that $\Gamma$ acts on its box space $\square_{\Lambda_i} \Gamma$. Thus, $\partial G(\square_{\Lambda_i}\Gamma)\cong \partial\beta( \square_{\Lambda_i}\Gamma) \rtimes \Gamma$. 
	\end{ex}
	
	

We note some results from the literature relating the coarse boundary groupoid to the geometry of the space of graphs. 
\begin{thm}[{\cite[Section 4]{willett2006some},\cite[Lemma 5.3]{willett2015random}}]\label{Rufus}
\begin{enumerate}[$(1)$]
\item A space of bounded geometry $X$ has property A if and only if the course boundary groupoid $\partial G(X)$ is topologically amenable.
\item Let $\mathcal{X}=\{X_i\}_{i\in \N}$ be a sequence of finite graphs with bounded degree that admits an asymptotic coarse embedding into a Hilbert space and $X$ the related space of graphs, then $\partial G(X)$ is topologically a-T-menable.
\end{enumerate}
\end{thm}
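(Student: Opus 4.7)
The plan is to reduce both parts to Theorem \ref{SkandalisTuYu}, which already characterises topological amenability and a-T-menability of the \emph{full} coarse groupoid $G(X)$, and to transfer these across the decomposition $G(X)^{(0)} = X \sqcup \partial\beta X$. For part $(1)$, one direction is routine: if $X$ has property A then $G(X)$ is topologically amenable by Theorem \ref{SkandalisTuYu}, and topological amenability passes to the reduction to the closed invariant subset $\partial\beta X \subset \beta X$, giving amenability of $\partial G(X)$. For the converse, I would glue amenability of $\partial G(X)$ with the trivial amenability of $G(X)|_X$ — a disjoint union of pair groupoids over the finite graphs $X_i$ — by a cutoff argument using a partition of unity subordinate to the open cover $X \sqcup \partial\beta X$ of $\beta X$; this produces compactly supported approximate means on all of $G(X)$, whence property A of $X$ follows from Theorem \ref{SkandalisTuYu} again.

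For part $(2)$, I would manufacture a proper continuous conditionally negative definite function $\psi$ on $\partial G(X)$ directly out of the $(R_i)$-locally conditionally negative definite kernels $K_i$ supplied by Definition \ref{asympEmb}. Define $K \colon X \times X \to \R$ by $K(x,y) = K_i(x,y)$ when $x,y$ lie in the same component $X_i$, and $K(x,y) = 0$ otherwise. For each fixed $R > 0$ the bound $K \le \rho_2(R)$ on the entourage $E_R$ means that $K|_{E_R}$ is a bounded function on a discrete set, so the universal property of the Stone--\v{C}ech compactification extends it uniquely and continuously to $\overline{E_R}^{\beta X \times \beta X}$. Taking the union over $R$ and restricting to $\partial G(X)$ yields a continuous $\psi$, symmetric by symmetry of each $K_i$, normalised on the diagonal of $\partial G(X)$ thanks to the limit-normalisation clause, and proper because $\{g \in \partial G(X) : \psi(g) \le C\}$ is contained in the compact closure of $E_{\rho_1^{-1}(C)}$ by the lower control function.

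The main obstacle is verifying conditional negative definiteness on the range fibres of $\partial G(X)$. A finite family $g_1,\dots,g_n$ in a single range fibre at a boundary point $\omega \in \partial\beta X$, together with coefficients $\lambda_k$ summing to zero, is represented by ultrafilter limits of pairs $(x_k^{(i)}, y^{(i)})$ whose graph distances are uniformly bounded by some $R$; consequently the representatives $x_1^{(i)},\dots,x_n^{(i)}$ all lie inside a ball of diameter $\le 2R$ for each $i$. Since $R_i \to \infty$, for sufficiently large $i$ item $(3)$ of Definition \ref{asympEmb} applies and yields $\sum_{k,l}\lambda_k\lambda_l K_i(x_k^{(i)}, x_l^{(i)}) \le 0$, and this inequality is preserved under the continuous extension. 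Turning the informal ``for large $i$'' into a precise statement about boundary points — presumably via the ultraproduct description of $\partial G(X)$ developed in the next section of the paper — is the technical heart of the argument and will require careful identification of fibre elements with tail behaviour of sequences of graph points; once that identification is in place, $\psi$ meets the definition of a proper conditionally negative definite function and $\partial G(X)$ is topologically a-T-menable by item $(1)$ of Definition \ref{aTmean}.
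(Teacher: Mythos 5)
This theorem is quoted by the paper from the literature (\cite[Section 4]{willett2006some} and \cite[Lemma 5.3]{willett2015random}) and is not proved in the text, so there is no internal proof to compare against; your proposal reconstructs essentially the standard arguments from those sources and is correct in outline. For part $(1)$ your reduction to Theorem \ref{SkandalisTuYu} via the decomposition of $\beta X$ into the open invariant subset $X$ and its closed complement $\partial\beta X$ is exactly the usual route (amenability of $G(X)$ is equivalent to amenability of $G(X)|_X$ and of $\partial G(X)$, and the former is automatic since $G(X)|_X$ is a proper groupoid); the only slip is calling $X \sqcup \partial\beta X$ an \emph{open} cover --- $\partial\beta X$ is closed, and the correct gluing extends the approximate means from the closed subgroupoid to a neighbourhood and observes that the complement of that neighbourhood is a finite subset of $X$, where trivial means suffice. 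For part $(2)$ your construction of $\psi$ as the continuous extension of the glued kernel $K$ over each $\overline{E_R}$, together with the verification of symmetry, normalisation on the boundary diagonal via limit-normalisation, properness via $\rho_1$, and conditional negative definiteness by representing fibre elements as ultralimits of points in a ball of radius $R$ and invoking item $(3)$ of Definition \ref{asympEmb} once $R_i > R$, is precisely Willett's argument; the ``technical heart'' you flag is supplied by Proposition \ref{isometric}, which identifies fibres of $\partial G(X)$ with metric ultraproducts and makes the passage from ``for cofinitely many $i$'' to the boundary value legitimate.
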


 It is natural to ask whether asymptotic coarse embeddability of $X$ is equivalent to a-T-menability of the coarse boundary groupoid $\partial G(X)$, and this question was indeed raised by Willett in \cite{willett2015random}. We will provide an answer to this in Theorem \ref{thmtopaTmenable}.

	\subsection{Measure theory on the coarse boundary groupoid}
	
	The aim of this section is to exploit the existence of a measure on the coarse boundary groupoid along a non-principal ultrafilter, in the special case that $X$ is the space of graphs of a sequence of finite bounded degree graphs. The measure comes from the limit of all counting measures on the sequence of finite graphs.

	We construct here the ultraproduct in the particular case of the space of graphs coming from a sequence of finite bounded degree graphs. It is easily extendable to the general metric space case or even for normed spaces. For more about ultraproducts of graphs, see \cite{conley2013ultraproducts}.

	Given a sequence $\mathcal{X}=\{X_i\}_{i \in \N}$ of finite graphs with bounded degree such that the cardinality of each graph $ X_i$ is going to infinity when $i$ goes to infinity. Fix a non-principal ultrafilter $\omega\in \partial\beta \N$.
	
	\begin{defi}
		The \emph{ultraproduct} of the sequence of graphs $(X_i)_{i\in\N}$ is the set $$\prod_{i\to\omega} X_i \coloneqq\bigg(\prod_{i\in\N} X_i\bigg) \bigg/  \sim_\omega, \text{ where } (z_i)_{i\in \N} \sim_\omega (y_i)_{i\in \N} \text{ iff }\{i\in \N; z_i=y_i\} \in \omega.$$
	\end{defi} 
	
	\begin{rem}
		The graphs are also metric spaces so we can write the equivalence relation as $$ (z_i)_{i\in \N} \sim_\omega (y_i)_{i\in \N} \text{  if  and only if }\{i\in \N; d_i(z_i,y_i)=0\} \in \omega $$
		Even more, the sequence of length distances $(d_i)_{i\in \N} $ on each graph $X_i$ induces a metric on the ultraproduct of the sequence of graphs given by $ d_\omega((y_i)_i,(z_i)_i)= \lim\limits_{i \to \omega} d_i(y_i,z_i)$ along the non-principal ultrafilter $\omega \in \partial \beta \N$. Notice that the metric $d_\omega$ might be not finite, so we allow the metric take values on $[0,\infty]$.
	\end{rem}

	When necessary, we denote by $[y_i]_\omega$ or $\lim\limits_{i\to\omega} y_i$, an element $y \in\prod\limits_{i\to\omega} X_i$. Even more, sometimes we use the notation $\prod\limits_{i\to\omega}(X_i,d_i) $ for ultraproduct, to make clear the distance in each graph $(X_i,d_i)$. We will use this notation for sofic groups and in the more general situation for measured spaces.

	In other to guarantee the convergence of the sequences some authors attach a fixed sequence $x_i \in X_i$ along the sequence of graphs and define
	$$\prod\limits_{i\in \N} \big(X_i,x_i,d_i\big)=\bigg\{(y_i)_{i\in\N} ; \sup_{i\in \N} d_i(x_i,y_i) < \infty \bigg\} \hspace{0.3cm} \text{and}  \hspace{0.3cm} d_\omega((y_i)_i,(z_i)_i)= \lim\limits_{i \to \omega} d_i(y_i,z_i).$$

	Notice that $\prod\limits_{i\in \N} \big(X_i,x_i\big)$ is the set of sequences finitely close to $(x_i)_{i\in\N}$. Thus, the distance $d_\omega$ is well-defined and $ \big( \prod\limits_{i\in \N} \big(X_i,x_i\big) ,d_\omega \big)$ is a metric space by the continuity of the limit. Putting in the same equivalence class, the sequences that converge to the same point along the non-principal ultrafilter $\omega \in \partial \beta \N$, i.e, $(y_i)_i \sim_\omega (z_i)_i $  if and only if $d_\omega((y_i)_i,(z_i)_i)=0$. We denote by $\prod\limits_{i\to \omega} \big(X_i,x_i,d_i\big)$ the quotient of $\prod\limits_{i\in \N} \big(X_i,x_i,d_i\big)$ by this equivalence relation.
	
	One can do the ultraproduct of the pair of graphs $X_i \times X_i$ with the sequence of metric $d_i$, where $\{X_i\}_{i\in\N}$ is a sequence of graphs. More specifically, if we take the ultraproduct $\prod\limits_{i\to\omega} (\{(x_i,y_i)\in X_i \times X_i; d_i(x_i,y_i)\leq \infty\},d_i)$, it give us a description of the coarse boundary groupoid of the space of graphs as notice in \cite[Example 2.33]{carderi2018non}. We will use this fact later on the text when we talk about measured ultraproducts. In the same way we did for the Stone-Čech compatification of $\N$, one can see a point in $\partial \beta X$ as a limit of a sequence along a non-principal ultrafilter $\omega \in \partial \beta \N$. Based on that, an important geometric relation between them was proved in \cite{alekseev2016sofic} and written here for completeness:

	\begin{prop}[{\cite[Proposition 2.14]{alekseev2016sofic}}]\label{isometric}
		Let $\partial G(X)$ be the boundary groupoid, where $X$ is a space of finite bounded degree graphs. For a fixed non-principal ultrafilter $\omega \in \partial \beta \N$ and a point $\eta\in \partial \beta X$ such that $\eta =  [x_i]_\omega $, the source fiber $\big(\partial G( X)\big)_\eta =\{(\eta_1,\eta)\in \partial\beta X \times \partial\beta X \} $ with the metric $ d_\eta\big((\eta_1,\eta), (\eta_2,\eta) \big) = \inf \{R>0 ; (\eta_1,\eta_2)\in \overline{E_R}  \}$ has base point isometry with $\bigg(\prod\limits_{i\to\omega} X_i,x_i,d_i\bigg)$ via $$f:\bigg(\prod\limits_{i\to\omega} X_i,x_i,d_i\bigg) \to \big(\partial G(X)\big)_\eta$$  $$\hspace{1.6cm}\big([y_i]_\omega\big) \mapsto ([y_i]_\omega,\eta).$$
	\end{prop}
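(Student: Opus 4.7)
My plan is to establish that $f$ is a base-point preserving bijection and an isometry; the main geometric tool throughout is bounded geometry, which forces each entourage to decompose as a finite union of graphs of partial bijections. First, well-definedness: a representative $(y_i)_{i\in\N}$ of a class in the ultraproduct satisfies $\sup_i d_i(x_i, y_i) = R<\infty$, so $(y_i, x_i)\in E_R$ for every $i$, which forces $([y_i]_\omega, \eta)\in \overline{E_R}^{\beta X \times \beta X}\subseteq G(X)$. Non-principality of $\omega$ combined with $\eta\in \partial\beta X$ ensures $[y_i]_\omega\in \partial\beta X$, so $f$ lands in $\big(\partial G(X)\big)_\eta$. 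Two representatives agreeing on an $\omega$-set induce the same pushforward to $\beta X$, so $f$ descends to the ultraproduct, and the base class $[x_i]_\omega$ is sent to $(\eta,\eta)$, the unit at $\eta$, giving base-point preservation.

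The core geometric input is the following \emph{partial-bijection decomposition}: by bounded geometry, $E_R\subseteq X\times X$ has uniformly bounded degree (at most $N(R):=\sup_x |B_R(x)|$) when viewed as a bipartite graph on two copies of $X$, so by a K\"onig-type edge coloring it decomposes as a finite union
\[
E_R = \tau_1\cup\cdots\cup \tau_M
\]
of graphs of partial bijections $\tau_k\colon D_k\to X$ satisfying $d(x,\tau_k(x))\leq R$. Each $\tau_k$ extends uniquely to a continuous $\tilde\tau_k\colon \overline{D_k}^{\beta X}\to\beta X$, and since closures in $\beta X\times\beta X$ commute with finite unions we obtain
\[
\overline{E_R}^{\beta X\times\beta X} \;=\; \bigcup_{k=1}^M \bigl\{(\tilde\tau_k(\xi),\xi)\colon \xi\in\overline{D_k}^{\beta X}\bigr\}.
\]
Surjectivity of $f$ is now immediate: for $(\eta_1,\eta)\in \big(\partial G(X)\big)_\eta$ with $(\eta_1,\eta)\in \overline{E_R}$, pick $k$ with $\eta\in \overline{D_k}^{\beta X}$ and $\eta_1=\tilde\tau_k(\eta)$; then $J=\{i: x_i\in D_k\}\in\omega$, and setting $y_i:=\tau_k(x_i)$ on $J$ (arbitrarily elsewhere) produces a representative with $d_i(x_i,y_i)\leq R$ and $f([y_i]_\omega)=(\eta_1,\eta)$.

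For the isometry, the inequality $d_\eta(f[y_i],f[z_i])\leq d_\omega([y_i],[z_i])$ is straightforward: if $R>d_\omega$ then $\{i: d_i(y_i,z_i)\leq R\}\in\omega$, and the latter forces $(\lim_\omega y_i,\lim_\omega z_i)\in \overline{E_R}$. The reverse inequality, which I expect to be the main obstacle, requires that $(\lim_\omega y_i,\lim_\omega z_i)\in \overline{\tau_k}$ implies $\lim_\omega z_i = \tilde\tau_k(\lim_\omega y_i)=\lim_\omega \tau_k(y_i)$ in $\beta X$ and hence already $z_i = \tau_k(y_i)$ on an $\omega$-set. This is a Katetov-type rigidity principle: for sequences in a uniformly discrete bounded-geometry space that stay in a uniformly bounded neighborhood of a common base sequence, equality of pushforwards in $\beta X$ implies $\omega$-almost-everywhere equality. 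The principle holds here because bounded geometry forces the ultraproduct $\prod_{i\to\omega} B_R(x_i)$ to have uniformly bounded cardinality and hence to embed injectively into $\beta X$ via the pushforward. Granted this, $d_i(y_i,z_i) = d_i(y_i,\tau_k(y_i))\leq R$ on an $\omega$-set, giving $d_\omega\leq R$. Injectivity of $f$ is then an immediate consequence of the isometry, completing the proof.
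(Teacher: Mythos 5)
Your proof is correct. Note that this paper does not actually prove Proposition \ref{isometric} --- it is quoted from \cite[Proposition 2.14]{alekseev2016sofic} ``for completeness'' --- so there is no in-paper argument to compare against; but your route via the decomposition of $E_R$ into finitely many partial bijections and the resulting description of $\overline{E_R}^{\beta X\times\beta X}$ as a union of graphs of their continuous extensions is exactly the standard Skandalis--Tu--Yu technology that the cited proof also rests on. Two small remarks. First, in the hard inequality you swap the roles of $y_i$ and $z_i$ relative to your own convention $\overline{E_R}=\bigcup_k\{(\tilde\tau_k(\xi),\xi)\}$: the conclusion should read $\lim_\omega y_i=\tilde\tau_k(\lim_\omega z_i)$, hence $y_i=\tau_k(z_i)$ on an $\omega$-set; this is harmless since $E_R$ is symmetric. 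Second, your ``rigidity principle'' is right but can be justified even more cheaply than by bounded geometry: since the $X_i$ are pairwise disjoint and $a_i,b_i\in X_i$, if $S=\{i:a_i\neq b_i\}\in\omega$ then $A=\{a_i:i\in S\}$ satisfies $\{i:a_i\in A\}\in\omega$ while $\{i:b_i\in A\}\cap S=\emptyset$, so $\lim_\omega a_i\in\overline{A}$ but $\lim_\omega b_i\notin\overline{A}$; as stated, your bounded-cardinality argument still needs the observation that the finitely many branch sets $\{w_i^j\}_i$ are pairwise disjoint subsets of the discrete space $X$ and hence have disjoint closures in $\beta X$, which is again the disjointness of the $X_i$ doing the work. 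You should also discard the finitely many $i$ for which $\tau_k(x_i)$ lands in a different component $X_j$, but this set is never in $\omega$.
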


	Notice that, given $x\in\partial\beta X$, there are a non-principal ultrafilter $\omega \in \partial\beta \N$ and sequence $(x_i)_{i\in\N}$ such that $x=\lim\limits_{i\to\omega} x_i=[x_i]_\omega$. This approximation can generate many different choices, but the proposition above proves that those choices have isometric source fibers. This result assists us to define a measure on $\partial\beta X$ induced by the ultralimit of the counting measure $\mu_i$ in each finite graph $X_i$, i.e, $\mu_i(A)= \frac{|A|}{|X_i|}$, for all subsets $A\in X_i$. 
	
	Fix a non-principal ultrafilter $\omega \in \partial\beta\N$, we can obtain a measure $\mu_\omega$ on $G^{(0)}=\partial \beta X$ related to the state $\tau_\omega: C(\beta X)\to \R$ defined by 
	$$ \tau_\omega(f) = \lim\limits_{i \to\omega} \dfrac{1}{|X_i|} \sum_{x \in X_i} f(x).$$
	Our measure of sets on a clopen subset  $A \subseteq \partial  \beta X$ is given by $$\mu_\omega(A)= \tau_\omega(\chi_{[A_i]_\omega}) =\lim\limits_{i \to\omega} \dfrac{1}{|X_i|} \sum\limits_{x \in X_i} \chi_{A_i}(x)= \lim\limits_{i\to\omega} \mu_i (A_i),$$ where $\chi$ denotes the respective characteristic function. Note that $\mu_\omega(X)=0$ and $\mu_\omega(\partial \beta X)=1$, moreover, $\mu_\omega$ is a probability measure over a non-standard space $\partial \beta X$. With this measure on $\partial \beta X$ we can lift this measure to the coarse boundary groupoid and then have a measured groupoid as we saw in the last section. 
	
	As we have seen before, the coarse boundary groupoid $\partial G(X)$
	is a topological groupoid. For a given non-principal ultrafilter $\omega \in \partial \beta \N$, we can attach with it a measure $\mu_\omega$ on the base space $\partial \beta X$. In this situation, $(\partial G(X), \hat{\mu}_\omega)$ is a measured groupoid, where $\hat{\mu}_\omega$ is the lift of $\mu_\omega$ on $\partial G(X)$.
	Sometimes we refer to the measured coarse boundary groupoid $(\partial G(X), \hat{\mu}_\omega)$ by it equivalence relation $(R_{\partial G(X)},\hat{\mu}_\omega)$ when we talk about hyperfiniteness. We are allowed to do this because of the isomorphism between these objects and it is more convenient to keep the notation common in ergodic theory, when we refer to hyperfiniteness of measured spaces.

	\subsubsection{Measured Ultraproducts}

	Another way to construct the measure $\mu_\omega$ on $\partial\beta X$ is by using Caratheodory's Theorem and the theory of ultraproducts of measure spaces; we refer to \cite[Section 1.1.3]{carderi2015groups}.
	
	Take a sequence of measurable spaces $\{(X_i,\mathcal{B}_i,\mu_i)\}_{i\in \N}$, where $\mu_i$ the respective measure over the $\sigma$-algebra $\mathcal{B}_i$. Let $\prod\limits_{i\to\omega} X_i$ be the ultraproduct of $\{X_i\}_{i\in\N}$ for a fixed non-principal ultrafilter $\omega \in \partial \beta \N$. Define a map $\theta_\omega$ from the power set of the ultraproduct $P\bigg(\prod\limits_{i\to\omega} X_i\bigg) $ to $[0,\infty)$, for $A \in P\bigg(\prod\limits_{i\to\omega} X_i\bigg)$, as
	$$ \theta_\omega(A)=\inf\bigg\{\sum_{n\in \N}\lim_{i \to \omega} \mu_i(B_i^n); A \subseteq \bigcup_n [B_i^n]_\omega, B_i^n \in \mathcal{B}_i \bigg\}.$$
	
	This map defined above is an outer measure, i.e, a monotone, countably subadditive map with $\theta_\omega(\emptyset)=0$. The \textit{measured ultraproduct} along a non-principal ultrafilter $\omega \in \partial \beta \N$ of a sequence of measure spaces $\{(X_i,\mathcal{B}_i,\mu_i)\}_{i\in \N}$ is the triple $\bigg(\prod\limits_{i\to\omega} X_i, \mathcal{B}, \theta_\omega\bigg)$, where $\bigg(\prod\limits_{i\to\omega} X_i\bigg)$ is the ultraproduct as we saw before with the measure $\theta_\omega$ over the $\sigma-$algebra $$\mathcal{B}=\bigg\{A \subseteq \prod\limits_{i\to\omega} X_i;\hspace{0.1cm} \theta_\omega(C) \geq \theta_\omega(C\cap A) + \theta_\omega(C \cap A^c), \hspace{0.3cm} \forall C \subseteq \prod\limits_{i\to\omega} X_i \bigg\}.$$
	
	By Caratheodory's Extension Theorem,  $\bigg(\prod\limits_{i\to\omega} X_i, \mathcal{B}, \theta_\omega\bigg)$ is a measured space (see \cite{roberts2001measure}). Even more with the next result one can compute the measure along the approximation using the ultraproducts. A more general version of the next lemma is proved in \cite{carderi2015groups}.  
	
	\begin{lemma}[{\cite[Proposition 1.1.7]{carderi2015groups}}]\label{subsetUltra}
		Let  $\bigg(\prod\limits_{i\to\omega} X_i, \mathcal{B}, \theta_\omega\bigg)$ be the measured ultraproduct of the sequence of measured spaces $\{(X_i,\mathcal{B}_i,\mu_i)\}_{i\in \N}$, then; 
		\begin{enumerate}[(1)]
			\item For all sequences $\{ B_i\}_{i\in \N} \in \mathcal{B}_i$, the class $[B_i]_\omega$ is in $\mathcal{B} $ and $\theta_\omega( [B_i]_\omega ) = \lim\limits_{i \to \omega} \mu_i(B_i)$;
			\item For all $A\in \mathcal{B}$, there is a sequence $\{ B_i\}_{i\in \N} \in \mathcal{B}_i$ such that $\theta_\omega(A \triangle [B_i]_\omega)=0.$
			
		\end{enumerate}
	\end{lemma}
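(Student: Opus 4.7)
The plan is to verify both statements via a Caratheodory-style analysis combined with an $\aleph_1$-saturation argument characteristic of ultraproducts along a non-principal ultrafilter on $\N$. The main obstacle in both parts is that an inclusion of the form $[B_i]_\omega\subseteq\bigcup_n [B_i^n]_\omega$ is strictly weaker than the corresponding inclusion of the underlying sets holding $\omega$-almost everywhere; only a countable-intersection argument inside the ultraproduct forces a finite truncation to absorb the full measure.

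For part $(1)$, I would first show that every cylinder class $[B_i]_\omega$ lies in the $\sigma$-algebra $\mathcal{B}$. Given a test set $C\subseteq\prod_{i\to\omega}X_i$ and an almost-optimal cover $C\subseteq\bigcup_n [D_i^n]_\omega$, split each $D_i^n=(D_i^n\cap B_i)\sqcup(D_i^n\cap B_i^c)$, take ultralimits, and sum; finite additivity of each $\mu_i$ then yields the Caratheodory inequality $\theta_\omega(C)\ge\theta_\omega(C\cap[B_i]_\omega)+\theta_\omega(C\cap[B_i]_\omega^c)$. The identity $\theta_\omega([B_i]_\omega)=\lim_{i\to\omega}\mu_i(B_i)$ is then checked in two steps: the upper bound is immediate by covering $[B_i]_\omega$ by itself, and for the lower bound I would argue by contradiction. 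Assume $[B_i]_\omega\subseteq\bigcup_n [B_i^n]_\omega$ while $\delta:=\lim_{i\to\omega}\mu_i(B_i)-\sum_n\lim_{i\to\omega}\mu_i(B_i^n)>0$. Finite additivity gives $\lim_{i\to\omega}\mu_i\bigl(B_i\setminus\bigcup_{n\le N}B_i^n\bigr)\ge\delta$ for every $N$, so the sets $I_N:=\{i:\mu_i(B_i\setminus\bigcup_{n\le N}B_i^n)\ge\delta/2\}$ all lie in $\omega$. Setting $J_N:=I_1\cap\cdots\cap I_N\in\omega$ and $k(i):=\sup\{N:i\in J_N\}$, I would pick $y_i\in B_i\setminus\bigcup_{n\le k(i)}B_i^n$ whenever $k(i)\ge 1$. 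Because $\{i:k(i)\ge n\}\supseteq J_n\in\omega$ for every fixed $n$, the class $[y_i]_\omega$ belongs to $[B_i]_\omega$ but to no $[B_i^n]_\omega$, contradicting the cover.

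For part $(2)$, fix $A\in\mathcal{B}$ (necessarily of finite outer measure, since $\theta_\omega$ is a probability measure). For each $k$ choose a cover $A\subseteq\bigcup_n [B_i^{n,k}]_\omega$ with total mass at most $\theta_\omega(A)+2^{-k}$, and truncate at an index $N_k$ so that the tail contributes less than $2^{-k}$. Then $C_i^k:=\bigcup_{n\le N_k}B_i^{n,k}$ satisfies $\theta_\omega(A\triangle[C_i^k]_\omega)\le 3\cdot 2^{-k}$, because both $A\setminus[C_i^k]_\omega$ (inside the small-measure tail) and $[C_i^k]_\omega\setminus A$ (controlled by the near-equality of $\theta_\omega([C_i^k]_\omega)$ and $\theta_\omega(A)$, using part $(1)$ applied to the cylinder $[C_i^k]_\omega$) are small. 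Finally, refining the $C_i^k$ so that $\{i:\mu_i(C_i^k\triangle C_i^{k+1})<2^{-k}\}\in\omega$ for every $k$, I would diagonalize by picking $k(i)\to\infty$ along $\omega$ and setting $B_i:=C_i^{k(i)}$; a telescoping estimate then yields $\lim_{i\to\omega}\mu_i(B_i\triangle C_i^k)\to 0$ as $k\to\infty$, whence $\theta_\omega([B_i]_\omega\triangle A)=0$.
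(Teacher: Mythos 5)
The paper does not actually prove this lemma --- it is imported from Carderi's thesis (\cite[Proposition 1.1.7]{carderi2015groups}) with the remark that a more general version is proved there --- so there is no in-text argument to compare against. Your proof is correct and self-contained, and it follows the standard Loeb-measure-style route that the cited source also takes: Caratheodory measurability of cylinder sets by splitting an almost-optimal cover along $B_i$ and $B_i^c$, the lower bound $\theta_\omega([B_i]_\omega)\ge\lim_{i\to\omega}\mu_i(B_i)$ via countable incompleteness of $\omega$ (any countable cover must already absorb the mass at a finite truncation, else a diagonal point of $[B_i]_\omega$ escapes every $[B_i^n]_\omega$), and approximation of a general measurable set by a diagonalized sequence of finite unions of cylinders. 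Three small points to tighten. First, in part (1), when $k(i)=\infty$ the nonemptiness of $B_i\setminus\bigcup_n B_i^n$ requires continuity from above of the finite measure $\mu_i$; it is cleaner to cap $k(i)$ at $i$, since $\{i: k(i)\ge n\}$ then still contains $J_n\cap\{i\ge n\}\in\omega$ and only finite truncations of measure at least $\delta/2$ are ever used. Second, the refinement step in part (2) naturally yields $\{i:\mu_i(C_i^k\triangle C_i^{k+1})<5\cdot 2^{-k}\}\in\omega$ rather than the stated $2^{-k}$ (via $\theta_\omega([C_i^k]_\omega\triangle[C_i^{k+1}]_\omega)\le 3\cdot 2^{-k}+3\cdot 2^{-k-1}$ together with part (1) applied to the cylinder $[C_i^k\triangle C_i^{k+1}]_\omega$); the telescoping estimate is unaffected. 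Third, your argument for part (2) needs $\theta_\omega(A)<\infty$, which you note and which covers every use in the paper, since the base measures are probabilities and the groupoid measure is handled entourage by entourage.
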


	In the particular case of the space of graphs, the measured ultraproduct of the sequence $\{(X_i,\mathcal{B}_i,\mu_i)\}_{i\in \N}$, where $\{X_i\}_{i \in \N} $ is the sequence of bounded degree finite graphs, $\mathcal{B}_i$ is the $\sigma-$algebra of the metric space and $\mu_i$ the counting measure in each graph $X_i,$ is  $(\partial\beta X, \mathcal{B}, \mu_\omega)$. Notice that the measure defined beforehand coincide, that is, $\theta_\omega=\lim\limits_{i \to \omega}\mu_i=\mu_\omega$. Not only the lift of the measures from the graphs to the groupoid is possible. The structure of the ultraproduct allows us to lift measurable bounded functions defined on it.

	\subsection{Approximation of functions defined on the coarse boundary groupoid}

	As we saw in the last section, when $\partial G(X)$ is the coarse boundary groupoid of a space of finite bounded degree graphs $X$, we can attach it with the measure $\hat{\mu}_\omega$ such that the coarse boundary groupoid is a measured groupoid. From now on, all claims will be restricted for this case. The goal of this section is to describe how one can lift functions from the measurable coarse boundary groupoid back to their space of graphs.

	As the groupoid $(\partial G(X),\hat{\mu}_\omega)$ can been seen as an ultraproduct of $X \times X$ where the distance of points are finite. We can work with approximations it that are compatible metric and measurable wise. For example, given an non-principal ultrafilter $\omega \in \partial \beta \N$ and $R >0$, the closure of the entourages $\overline{E_R}=\{(a,b)\in  \partial\beta X \times \partial\beta X ; d(a,b)\leq R\}\subseteq \partial G(X)$ can be described as a measurable limit of the sets $E_R^i= \{(a_i,b_i)\in X_i \times X_i ; d(a_i,b_i)\leq R\}$. More precisely, from the last Lemma, one can see that $\hat{\mu}_\omega([E_R^i]_\omega \triangle \overline{E_R})=0$.For this reason, sometimes we saw that $[E_R^i]_\omega = \overline{E_R}$ almost everywhere. Moreover, functions defined on the measurable coarse boundary groupoid also admits an approximation.
	
	\begin{defi}
		Fix an non-principal ultrafilter $\omega \in\partial \beta \N$. We say that a function $\psi \in L^\infty({\partial G(X)}, \hat{\mu}_\omega)$ \emph{admits an approximation}, if there is a sequence of functions $\psi_i \in L^\infty(X_i \times X_i, \hat{\mu}_i)$ such that, for every $k\in \N$, $\psi|_{\overline{E_k}}= [\psi_i|_{E_k^{i}}]_\omega$ almost everywhere, where $E_k^i$ is the sequence of sets $\{(x_i,y_i) \in X_i \times X_i; d(x_i,y_i)\leq k\}$ that describes almost everywhere the entourage $\overline{E_k}= [E_k^i]_\omega=\{(x,y) \in \partial\beta X\times \partial \beta X; d_\omega(x,y)\leq k\}$.	
	\end{defi}
	
	The next results can also be founded in the Remark 3.12 of \cite{alekseev2016sofic} with an operator algebra approach and in the case where $X$ is the space of graphs of a sofic approximation. Here we claim for a more general situation.

	\begin{lemma}\label{appfunc}
		All functions $\psi \in L^\infty({\partial G(X)}, \hat{\mu}_\omega)$ admit an approximation.
	\end{lemma}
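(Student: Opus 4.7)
The strategy is to promote the set-level approximation of Lemma \ref{subsetUltra}(2) to arbitrary bounded measurable functions, working along the exhaustion $\partial G(X) = \bigcup_{k \in \N}\overline{E_k}$ by entourages. Because the graphs have uniformly bounded degree $d$, one has $|E_k^i|/|X_i| \leq d^k$ and hence $\hat{\mu}_\omega(\overline{E_k}) \leq d^k$; each $\overline{E_k}$ is thus of finite $\hat{\mu}_\omega$-measure, and we may carry out the approximation level by level.

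For a characteristic function $\chi_A$ with $A \subseteq \overline{E_k}$ measurable, Lemma \ref{subsetUltra}(2) supplies $B_i \subseteq X_i \times X_i$ with $\hat{\mu}_\omega(A \triangle [B_i]_\omega) = 0$; after intersecting with $E_k^i$ we may assume $B_i \subseteq E_k^i$, and then $(\chi_{B_i})_i$ approximates $\chi_A$ in the sense of the definition. By linearity this extends to any simple function supported on $\overline{E_k}$. Since bounded measurable functions are uniform limits of simple functions (obtained by partitioning the range into narrow slabs and taking preimages), it suffices to check that the subspace of $L^\infty(\overline{E_k},\hat{\mu}_\omega)$ admitting an approximation is closed under uniform limits. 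This follows from a standard Banach-space ultraproduct argument: given a uniformly Cauchy sequence of approximants $(\psi_i^{(n)})_i$ indexed by $n\in\N$, a diagonal choice $\psi_i := \psi_i^{(n(i))}$ with $n(i)\to\infty$ slowly enough along $\omega$ gives a single representing sequence for the uniform limit.

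It remains to amalgamate the per-level approximants $(\psi_i^{(k)})_i$, each supported in $E_k^i$, into one sequence $(\psi_i)_i$ that works simultaneously for every $k$. For each $k$, the restrictions $\psi_i^{(k)}|_{E_k^i}$ and $\psi_i^{(k+1)}|_{E_k^i}$ represent the same element $\psi|_{\overline{E_k}}$ in the ultraproduct, so the sets
\[
I_k = \bigl\{\, i \in \N : \|(\psi_i^{(k)}-\psi_i^{(k+1)})|_{E_k^i}\|_\infty < 1/k \,\bigr\}
\]
all lie in $\omega$; arrange them into a nested $\omega$-large family $J_1 \supseteq J_2 \supseteq \cdots$, set $k(i):=\max\{k : i \in J_k\}$ (which tends to $\infty$ along $\omega$), and define $\psi_i := \psi_i^{(k(i))}$. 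For every fixed $k$, the set $\{i : k(i)\geq k\}$ lies in $\omega$, and on this set $\psi_i|_{E_k^i}$ ultralimits to $\psi|_{\overline{E_k}}$, yielding the desired sequence.

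The principal obstacle is precisely this final diagonalisation: the per-level approximants are a priori incompatible on overlaps $E_k^i \cap E_{k'}^i$ for $k\neq k'$ and must be made coherent in the ultraproduct. The fix relies on the fact that $\hat{\mu}_\omega$ is itself an ultraproduct measure, so that differences vanishing along $\omega$ in $L^\infty$-norm genuinely vanish in the ultraproduct $L^\infty$-space, and thus the diagonal choice above produces exact equality rather than mere uniform approximation.
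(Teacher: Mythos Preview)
Your approach differs from the paper's and contains a genuine gap in the final diagonalisation. You claim that the sets
\[
I_k = \bigl\{\, i \in \N : \|(\psi_i^{(k)}-\psi_i^{(k+1)})|_{E_k^i}\|_\infty < 1/k \,\bigr\}
\]
lie in $\omega$ because the two sequences represent the same element $\psi|_{\overline{E_k}}$ in the ultraproduct. But equality in $L^\infty(\overline{E_k},\hat{\mu}_\omega)$ is \emph{almost-everywhere} equality, not convergence of the $\|\cdot\|_\infty$-difference along $\omega$. Concretely: if $f_i \equiv 0$ and $g_i = \chi_{\{p_i\}}$ for a single point $p_i \in E_k^i$, then $[f_i]_\omega = [g_i]_\omega$ $\hat{\mu}_\omega$-a.e.\ (a singleton has asymptotically zero measure), yet $\|f_i - g_i\|_\infty = 1$ for every $i$. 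So $I_k \in \omega$ does not follow, and the diagonal choice $\psi_i := \psi_i^{(k(i))}$ is not justified as written. The same confusion between a.e.-equality and $\|\cdot\|_\infty$-closeness lurks in your ``uniform limit'' step as well. One can repair the argument by redefining $I_k$ via the measure of the disagreement set rather than the sup-norm, but the bookkeeping then becomes noticeably heavier.

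The paper sidesteps the whole overlap problem by replacing the nested entourages $E_k^i$ with the \emph{disjoint} annuli $S_k^i := E_k^i \setminus E_{k-1}^i$. Then $\partial G(X)$ decomposes $\hat{\mu}_\omega$-a.e.\ as $\bigsqcup_k \prod_{i\to\omega} S_k^i$, and correspondingly $L^\infty(\partial G(X),\hat{\mu}_\omega)$ splits as a direct sum over $k$ of ultraproducts $\prod_{i\to\omega} L^\infty(S_k^i,\hat{\mu}_i)$. Writing $\psi = \sum_k \psi^k$ with $\psi^k$ supported on the $k$-th annulus, each $\psi^k$ is represented by a sequence $(\psi_i^k)_i$ with $\psi_i^k$ supported in $S_k^i$; since these supports are pairwise disjoint for different $k$, one simply sets $\psi_i(x_i,y_i) := \psi_i^{d(x_i,y_i)}(x_i,y_i)$. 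No diagonalisation is needed. Switching to this disjoint decomposition is both shorter and avoids the subtle distinction between ultraproduct-$L^\infty$ equality and $\|\cdot\|_\infty$-convergence along $\omega$ that tripped up your argument.
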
 
	
	\begin{proof}
		We can describe the coarse boundary groupoid $({\partial G(X)}, \hat{\mu}_\omega) $ by the ultraproduct $\prod\limits_{i\to\omega} (\{(x_i,y_i)\in X_i \times X_i; d_i(x_i,y_i)\leq \infty\},d_i,\mu_i)$. Even more, it is almost everywhere seen as $ \bigcup_{k>0} \prod\limits_{i\to \omega} (E_k^i,\hat{\mu}_i)$. Define for $i,k \in \N$, the set $S_{k+1}^i\coloneqq E_{k+1}^i \setminus E_k^i$. Notice that $S_{k}^i$ is disjoint from $S_{k'}^i$ for all $k\neq k'$. So, we have the following approximation
		$$\bigsqcup_{j=0}^{k+1} \prod\limits_{i\to \omega} (S_j^i,\hat{\mu}_i)= \prod\limits_{i\to \omega} (E_k^i,\hat{\mu}_i).$$ 
		
		Thus,  $({\partial G(X)}, \hat{\mu}_\omega) $ can be written almost everywhere as $ \bigsqcup\limits_{k>0}\prod\limits_{i\to \omega } (S_k^i,\hat{\mu}_i)$. 
		Now take $\psi\in L^\infty({\partial G(X)},\hat{\mu}_\omega) \cong L^\infty\bigg( \bigsqcup\limits_{k>0}\prod\limits_{ i\to\omega } S_k^i,\hat{\mu}_i\bigg)\cong  \bigsqcup\limits_{k>0}\prod\limits_{ i\to\omega } L^\infty\bigg( S_k^i,\hat{\mu}_i\bigg) $, we can write $\psi= \sum\limits_{k=0}^{\infty} \psi^k$, where
		$$ \psi^k =\psi|_{\prod\limits_{ i\to \omega } (S_k^i,\hat{\mu}_i)} = [(\psi_i^k)]_\omega $$

		Each $ \psi_i^k $ is supported in $S_{k}^i$, thus, given $(x_i,y_i)\in X_i$ such that $d(x_i,y_i)=k$, we take $\psi_i:X_i\times X_i \to \R$ defined by $\psi_i(x_i,y_i)\coloneqq \psi_i^k(x_i,y_i)$, that satisfies the
		required properties. \end{proof}


	For unbounded functions, we ask for more specific notion of approximation, but still fitting with the one defined before.

	\begin{defi}
		Fix a non-principal ultrafilter $\omega \in\partial \beta \N$. A map $\psi \in L^\infty({\partial G(X)}, \hat{\mu}_\omega)$ is \emph{represented} by a sequence of functions $(\psi_i: X_i \times X_i \to \R)_{i\in \N}$, if, for all $k\in\N$ and $L\in \Z$,  $$\psi\big|_{H_{\omega,k,L}}=\big[\psi_i \big|_{H_{i,k,L}}\big]_\omega,$$ where $H_{i,k,L}= \{ (x_i,y_i) \in X_i \times X_i;  d(x_i,y_i)=k, \psi_i(x_i,y_i)\in [L,L+1) \} $ and $H_{\omega,k,L} = [H_{i,k,L}]_\omega$.
	\end{defi}

	Abusing notation we also say that $(\psi_i)_{i\in \N}$ approximates $\psi $ if the above definition is satisfied. Notice that we used the integers only to define a nice countable partition, one can take different countable partitions of the coarse boundary groupoid if necessary.

	\begin{prop}\label{measufunc}
		For every measurable function $\psi: \big(\partial G (X),\hat{\mu}_\omega\big) \to \R$, there exists a sequence of functions $(\psi_i)_{i\in \N}$ from $X_i\times X_i $ to the real numbers that represents $\psi$.
	\end{prop}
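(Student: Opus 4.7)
The plan is to reduce to the bounded-function case of Lemma \ref{appfunc} by splitting $\psi$ into an integer part and a fractional part and treating them separately. I would write $\psi = \psi^{(0)} + \psi^{(1)}$, where $\psi^{(0)} := \lfloor \psi \rfloor$ is the integer floor and $\psi^{(1)} := \psi - \psi^{(0)}$ takes values in $[0, 1)$. Since $\psi^{(1)} \in L^\infty(\partial G(X), \hat\mu_\omega)$, Lemma \ref{appfunc} directly furnishes a sequence $(\psi^{(1)}_i)_{i \in \N}$ of functions $X_i \times X_i \to \R$ approximating it; after clipping pointwise into $[0,1)$, which modifies each $\psi^{(1)}_i$ only on a set of null ultraproduct measure, one may assume $\psi^{(1)}_i$ already takes values in $[0,1)$ everywhere.

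For the integer part $\psi^{(0)}$, I would exploit the countable measurable partition of $\partial G(X)$ into the sets $H_{\omega,k,L} := D_k \cap \psi^{-1}([L, L+1))$, where $D_k := \overline{E_k} \setminus \overline{E_{k-1}}$ is the slice of pairs at distance exactly $k$ and equals $[S_k^i]_\omega$ almost everywhere, with $S_k^i := \{(x_i,y_i) \in X_i \times X_i : d(x_i,y_i) = k\}$. By Lemma \ref{subsetUltra}(2), for each pair $(k, L)$ one can choose $B_{i,k,L} \subseteq S_k^i$ with $\hat\mu_\omega(H_{\omega,k,L} \triangle [B_{i,k,L}]_\omega) = 0$. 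Enumerating $\Z$ and greedily disjointifying inside each fixed $k$ produces pairwise disjoint $A_{i,k,L} \subseteq S_k^i$ still satisfying $[A_{i,k,L}]_\omega = H_{\omega,k,L}$ a.e., since the discarded overlaps have null ultraproduct by pairwise disjointness of the target partition. Define $\psi^{(0)}_i(x_i,y_i) := L$ when $(x_i,y_i) \in A_{i,k,L}$ (with $k = d(x_i,y_i)$), and $\psi^{(0)}_i(x_i,y_i) := 0$ on the remaining ``gap'' set $S_k^i \setminus \bigcup_L A_{i,k,L}$, and set $\psi_i := \psi^{(0)}_i + \psi^{(1)}_i$.

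Verification of the representation then proceeds as follows. With $\psi^{(1)}_i \in [0,1)$, the finite level sets $H_{i,k,L} = \{(x_i,y_i) \in S_k^i : \psi_i(x_i,y_i) \in [L, L+1)\}$ coincide with $A_{i,k,L}$ for $L \neq 0$, and with $A_{i,k,0}$ plus the gap set for $L = 0$. Since the gap $S_k^i \setminus \bigcup_L A_{i,k,L}$ has null ultraproduct measure (the partition $\{H_{\omega,k,L}\}_L$ covers $D_k$ a.e.), one gets $[H_{i,k,L}]_\omega = H_{\omega,k,L}$ a.e.\ for all $k$ and $L$. On each $H_{i,k,L}$ we have $\psi_i = L + \psi^{(1)}_i$, so $[\psi_i|_{H_{i,k,L}}]_\omega = L + [\psi^{(1)}_i|_{H_{i,k,L}}]_\omega = L + \psi^{(1)}|_{H_{\omega,k,L}} = \psi|_{H_{\omega,k,L}}$, the middle equality coming from Lemma \ref{appfunc} applied to $\psi^{(1)}$ and restricted to $H_{\omega,k,L} \subseteq \overline{E_k}$.

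The main obstacle is the measure-theoretic bookkeeping to ensure that the various modifications made at the finite level, namely the clipping of $\psi^{(1)}_i$ into $[0,1)$, the disjointification of the sequences $B_{i,k,L}$ into $A_{i,k,L}$, and the absorption of the leftover gap set into the $L=0$ piece, all disappear in the ultraproduct along $\omega$. This ultimately reduces to combining the pairwise disjointness and covering properties of the target partition $\{H_{\omega,k,L}\}$ with Lemma \ref{subsetUltra}, but it requires keeping track of several null-set equalities simultaneously to assemble them into the single statement $[H_{i,k,L}]_\omega = H_{\omega,k,L}$ needed for representation.
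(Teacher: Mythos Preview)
Your proof is correct and close in spirit to the paper's, but it uses a genuinely different decomposition. The paper cuts $\psi$ directly into the bounded pieces $\psi_{[L,L+1)} = \psi \cdot \chi_{\psi^{-1}[L,L+1)}$, applies Lemma~\ref{appfunc} separately to each one, then inductively disjointifies the supports of the resulting $\psi_{i,L}$ and sums. You instead split $\psi$ once into its integer floor $\psi^{(0)}$ and fractional part $\psi^{(1)}$: the fractional part is globally bounded, so a single invocation of Lemma~\ref{appfunc} handles it, while the integer part is rebuilt purely set-theoretically from Lemma~\ref{subsetUltra} applied to the level sets $H_{\omega,k,L}$. Both routes hinge on the same partition by $[L,L+1)$ and the same disjointification step; your version trades countably many applications of Lemma~\ref{appfunc} for one application plus countably many applications of the more elementary Lemma~\ref{subsetUltra}, at the cost of the extra bookkeeping you flag (clipping $\psi^{(1)}_i$ into $[0,1)$ and absorbing the gap set into $L=0$). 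One small point to make precise: ``clipping into $[0,1)$'' needs a concrete recipe since $[0,1)$ is not closed---for instance, replace $\psi^{(1)}_i$ by $\max\bigl(0,\min(\psi^{(1)}_i,1-1/i)\bigr)$ and observe that the modification locus has null ultraproduct because $\psi^{(1)}<1$ everywhere.
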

	\begin{proof}
		Given  $L\in \Z$, define the cutoff of  $\psi$ on the set $[L,L+1)$ as $$\psi_{[L,L+1)}(\gamma)\coloneqq \bigg\{ \begin{array}{cc}
			\psi(\gamma), & \text{if } \psi(\gamma) \in [L,L+1)\\
			0, & otherwise \end{array}  $$
		and denote by $A_{\omega,L}$ the support of $\psi_{[L,L+1)}$. Since $\psi_{[L,L+1)}$ is in $L^\infty(\partial G(X),\hat{\mu}_\omega) $ and by the Lemma \ref{appfunc}, there is a sequence $(\psi_{i,L})_{i\in \N}$ that approximates  $\psi_{[L,L+1]}$ such that for every $k\in \N$, $\psi_{[L,L+1)}|_{E_k}= [\psi_{i,L}|_{E_k^{i}}]_\omega$. Without loss of generality, we can assume, for all $L\in\Z$, that $A_{i,L}\coloneqq \supp (\psi_{i,L})$ is disjoint from $A_{i,L'} \coloneqq \supp (\psi_{i,L'})$, for all $L\neq L'$ and every $i\in \N$. Indeed, since $\psi_{[L,L+1)}$ and $\psi_{[L',L'+1)}$ are bounded, so there are sequences $(\psi_{i,L})_{i\in \N}$ and $(\psi_{i,L'})_{i\in \N}$ that approximates $\psi_{[L,L+1)}$ and $\psi_{[L',L'+1)}$ respectively. Clearly, $\supp\big(\psi_{[L,L+1)}\big)$ and $\supp\big(\psi_{[L',L'+1)}\big)$ are disjoint, so $$\lim_{i\to\omega} \hat{\mu}_i \big( \supp (\psi_{i,L}) \cap \supp (\psi_{i,L'}) \big)=0.$$ Sending to zero the values of $\psi_{i,L}$, where $\supp (\psi_{i,L}) \cap \supp (\psi_{i,L'}) \neq \emptyset$, we still representing $\psi_{[L,L+1)}$. We can apply this procedure inductively for every $L\in\Z$, such that the support of each approximation is disjoint from each other.

		Take $\psi_i = \sum\limits_{L=-\infty}^\infty \psi_{i,L}$ and $\psi = \sum\limits_{L=-\infty}^\infty \psi_{[L,L+1)}$. Notice that $H_{\omega,k,L}= S_{\omega,k} \cap A_{\omega,L} $, then
		
		$$\psi\big|_{H_{\omega,k,L}} = \psi_{[L,L+1)}\big|_{S_{\omega,k}}= \bigg[\psi_{i,L}\big|_{S_k^{i}}\bigg]_\omega =\bigg[\psi_i \big|_{H_{i,k,L}}\bigg]_\omega$$ 
		
		the second equality holds because the sets $S^i_k$'s are also disjoint, for all $k\in\N$.\end{proof}

	\section{Geometry of the space of graphs from its coarse boundary groupoid}

	This section is devoted to the main results of this article; here we systematically relate topological and measurable amenability or a-T-menability of the coarse boundary groupoid $\partial G(X)$ of a space of finite bounded degree graphs $X$ to the geometry of the space of graphs.
	
	

\subsection{Amenability}
	As pointed out before, it is known that a space of graphs $X$ has property A if and only if the coarse boundary groupoid $\partial G(X)$ is topologically amenable \cite[Section 4]{willett2006some}. In this section, we consider this question in the measurable setting, ultimately proving the following
	
	\begin{thm}[see Theorem \ref{mainthm}]
		Let $\mathcal{X}=\{X_i\}_{i\in\N}$ be a sequence of finite graphs with bounded degree such that the cardinality of $X_i$ goes to infinity when $i$ goes to infinity. Take $X$ to be the space of graphs of this sequence and ${\partial G(X)}$ the related coarse boundary groupoid. The following statements are equivalent:
		\begin{enumerate}[$(1)$]
			\item The measurable coarse boundary groupoid $({\partial G(X)},\hat{\mu}_\omega)$ is measurably amenable, for every non-principal ultrafilter $\omega\in\partial\beta\N$;
			\item  The sequence of graphs $\mathcal{X}$ has property A on average along every non-principal ultrafilter $\omega\in\partial\beta\N$;
			\item The sequence of graphs $\mathcal{X}$ has property almost-A along every non-principal ultrafilter $\omega \in \partial \beta \N$;
			\item The sequence of graphs $\mathcal{X}$ is hyperfinite;
			\item The measurable equivalence relation $({\partial G(X)},\hat{\mu}_\omega)$ induced by the coarse boundary groupoid is $\mu_\omega$-hyperfinite, for every non-principal ultrafilter $\omega\in\partial\beta\N$.
		\end{enumerate}
	\end{thm}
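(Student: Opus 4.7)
The plan is to prove the five conditions equivalent by arranging them in a circle $(4)\Leftrightarrow(3)\Rightarrow(2)\Rightarrow(1)\Leftrightarrow(5)\Rightarrow(4)$ that splits naturally into a ``combinatorial half'', relating hyperfiniteness to the various averaged versions of property A at the graph level, and a ``groupoid-theoretic half'', relating measurable amenability to hyperfiniteness of the associated measurable equivalence relation, with the bridge provided by the ultraproduct and approximation technology of Section \ref{chapGroupoid}. The equivalence (3)$\Leftrightarrow$(4) is essentially Kaiser's theorem from \cite{kaiser2019combinatorial}: for a bounded-degree sequence of graphs, property almost-A is equivalent to hyperfiniteness, and a standard ultrafilter argument shows that the quantifier ``along every non-principal ultrafilter $\omega$'' can be inserted for free once the property is formulated measure-theoretically.

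For (4)$\Rightarrow$(2), given a hyperfinite exhaustion with blocks of size at most $K_\varepsilon$ and at most $\varepsilon|V(X_i)|$ crossing edges, the normalised characteristic functions $\eta_x^i = |A|^{-1}\chi_A$, where $A$ is the block containing $x$, satisfy the defining property A inequalities on all but an $\varepsilon$-fraction of edges, which is precisely the on-average statement. The implication (2)$\Rightarrow$(1) then uses Proposition \ref{measufunc}: the Reiter-type kernels $(\eta_x^{i,n})_i$ witnessing property A on average along $\omega$ can be reassembled via the ultraproduct into functions $\Phi_n \in L^\infty(\partial\beta X, \ell^1(\partial G(X)))$, and the on-average edge condition translates verbatim into the Weak Reiter Condition of Definition \ref{AmeGroupoid} for $(\partial G(X),\hat{\mu}_\omega)$.

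For (1)$\Leftrightarrow$(5), observe that the coarse boundary groupoid is itself an equivalence relation, since its algebraic operation is the pair groupoid, so $R_{\partial G(X)} \cong \partial G(X)$ as measurable equivalence relations on $(\partial\beta X,\mu_\omega)$. By the Connes--Feldman--Weiss theorem, measurable amenability of a countable Borel equivalence relation on a standard probability space is equivalent to its $\mu_\omega$-hyperfiniteness, so both implications follow at once.

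The main obstacle is the closing implication (5)$\Rightarrow$(4), i.e.\ extracting a genuine hyperfinite partition of the finite graphs from $\mu_\omega$-hyperfiniteness of the ultraproduct equivalence relation for every non-principal $\omega$. The plan is to fix $\varepsilon>0$, take an increasing sequence of finite sub-equivalence relations $R_n \uparrow R_{\partial G(X)}$ with uniformly bounded class size, and use Lemma \ref{subsetUltra} to represent each $R_n$ as $[R_n^i]_\omega$ for a sequence of equivalence relations on $X_i$ with the same bound on class size for $\omega$-most $i$. One then checks that for $n$ sufficiently large the $\hat{\mu}_\omega$-measure of the set of pairs in $\overline{E_1}$ that are not in $R_n$ is smaller than $\varepsilon$, which by the approximation lemma forces the number of edges of $X_i$ crossing classes of $R_n^i$ to be $o(|V(X_i)|)$ along $\omega$. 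A diagonal argument through $\varepsilon = 1/n$, combined with the hypothesis that every non-principal ultrafilter detects the same behaviour, then yields a single hyperfinite exhaustion of $\mathcal{X}$. The subtlety to control is that classes of the approximating equivalence relation $R_n^i$ need not be connected subgraphs of $X_i$; one must refine to connected components of the induced subgraphs and verify that this refinement only cuts edges already accounted for in the defect.
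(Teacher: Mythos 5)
Your overall circular strategy is sound and in fact routes several implications differently from the paper: the paper proves $(1)\Rightarrow(2)\Leftrightarrow(3)\Rightarrow(4)\Leftrightarrow(5)\Leftrightarrow(1)$, obtaining $(3)\Rightarrow(4)$ through the groupoid (property almost-A gives measurable amenability of $\partial G(X)$, hence $\mu_\omega$-hyperfiniteness of the relation, hence hyperfiniteness of the graphs), whereas you import $(3)\Leftrightarrow(4)$ from Kaiser and instead prove $(4)\Rightarrow(2)$ directly by the block-characteristic-function construction and $(2)\Rightarrow(1)$ by lifting the Reiter functions to the ultraproduct. Those two combinatorial steps are essentially fine, with two small caveats: for $(4)\Rightarrow(2)$ you need the blocks to be connected so that bounded cardinality yields bounded diameter (which the paper's definition of hyperfiniteness does guarantee), and for $(2)\Rightarrow(1)$ the on-average estimate only controls $\|\xi_x-\xi_y\|_1$ over edges, so to verify the Weak Reiter condition on a compact set contained in $\overline{E_R}$ you must telescope along paths, picking up a factor depending on $R$ and the degree bound --- harmless, but not ``verbatim''.

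The genuine gap is in $(1)\Leftrightarrow(5)$. You invoke the Connes--Feldman--Weiss theorem for ``a countable Borel equivalence relation on a standard probability space'', but $(\partial\beta X,\mu_\omega)$ is not a standard probability space, so the classical theorem does not apply as stated. This is exactly the point of the paper's Theorem \ref{CFW}: the easy direction (hyperfinite implies amenable) is done by hand with the normalised indicator functions of the classes, while the hard direction passes to a p.m.p.\ factor on a standard Borel space which is weakly equivalent to $(\partial G(X),\hat{\mu}_\omega)$ (using the machinery of Carderi--Gaboriau--de la Salle), applies classical CFW there, and pulls hyperfiniteness back through the weak equivalence. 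Without this, or some substitute, your equivalence $(1)\Leftrightarrow(5)$ is unjustified. A second, smaller issue sits in $(5)\Rightarrow(4)$: Lemma \ref{subsetUltra} produces sequences of measurable sets $B_i^n\subseteq X_i\times X_i$ with $[B_i^n]_\omega=R_n$ up to null sets, but these representatives need not themselves be equivalence relations on $X_i$, nor need their classes be uniformly bounded for every $i$; you must repair them (symmetrise, discard the $\omega$-small set of indices where the class-size bound fails, and then, as you do note, pass to connected components) before the crossing-edge count makes sense. The paper sidesteps this by working with the ``remove a small vertex set'' characterisations (Proposition \ref{Elekweakdef}, Corollary \ref{Elekcor}) together with the fibrewise isometry of Proposition \ref{isometric}.
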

		
Let us remark here that the main novelty of this theorem is to put the interpret the known results about hyperfiniteness and property almost-A using the language of groupoids. Indeed, as it turns out, the subtle difference between property A and hyperfiniteness turns out to be precisely the difference between topological and measured amenability of the coarse boundary groupoid.

We first collect the necessary notions.

 \subsubsection{Hyperfiniteness along an ultrafilter}

	We saw before the basic notion of hyperfiniteness for a sequence of graphs. Notice that we defined a hyperfinite in sequence of graphs along the natural numbers, but we can do it taking this property for every non-principal ultrafilter $\omega\in \partial\beta \N$.
	
	\begin{defi}
		A sequence of finite graphs $\mathcal{X} = \{X_i\}_{i \in\N}$ is  \textit{hyperfinite along a non-principal ultrafilter $\omega \in \partial \beta \N$} if, for every $\varepsilon >0$, there is a $K_{\varepsilon} \in \N$ and a partition of the vertex sets $ V(X_i) = A_1^i \cup A_2^i \cup \dots \cup A_{n_i}^i$, for all $i \in \N$, such that $|A_l^i|< K_\varepsilon$, for  $1\leq l \leq n_i$ and
		if $E_i^\varepsilon = \{(x,y)\in E(X_i); x \in A_n^i, y \in A_m^i, n\neq m\}$ then
		$$ \lim_{i \rightarrow \omega} \dfrac{|E_i^\varepsilon|}{|V(X_i)|} \leq \varepsilon .$$
	\end{defi}

	Naturally, we have the following equivalence between the definitions that is only an argument over the ultralimits:
	
	\begin{lemma}\label{hyperultra}
		Let $\mathcal{X} = \{X_i\}_{i \in\N}$ be a sequence of finite graphs. The sequence $\{X_i\}_{i \in\N}$ is hyperfinite if and only if for every non-principal ultrafilter $\omega \in \partial\beta\N$, the sequence $\{X_i\}_{i\in \N}$ is hyperfinite along the ultrafilter $\omega \in \partial \beta \N$.
	\end{lemma}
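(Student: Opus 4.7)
The plan is to treat the two directions separately; the forward implication is essentially a tautology while the converse requires a short ultrafilter construction.

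For the forward direction, suppose $\mathcal{X}$ is hyperfinite. Given $\varepsilon>0$, pick the constant $K_\varepsilon$ and partitions $V(X_i)=A_1^i\cup\dots\cup A_{n_i}^i$ witnessing hyperfiniteness. For any non-principal ultrafilter $\omega\in\partial\beta\N$ we have $\lim_{i\to\omega} |E_i^\varepsilon|/|V(X_i)|\leq \limsup_{i\to\infty} |E_i^\varepsilon|/|V(X_i)|\leq\varepsilon$, so the very same partitions witness hyperfiniteness along $\omega$.

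For the converse I would argue by contraposition. Assume $\mathcal{X}$ is not hyperfinite. Then there exists $\varepsilon_0>0$ such that for every $K\in\N$, no sequence of partitions of the $X_i$'s into pieces of size less than $K$ can yield $\limsup_i |E_i^\varepsilon|/|V(X_i)|\leq\varepsilon_0$. Introduce the auxiliary quantity
\[
f(K,i) \;=\; \min\Bigl\{\tfrac{|E_i^\varepsilon|}{|V(X_i)|} \;:\; \text{partitions of } X_i \text{ into pieces of size} <K \Bigr\},
\]
which is well-defined (the $X_i$ are finite) and non-increasing in $K$. By assumption $\limsup_i f(K,i)>\varepsilon_0$ for every $K$, so $S_K:=\{i\in\N:\,f(K,i)>\varepsilon_0\}$ is infinite for each $K$.

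The key step is then to build a single non-principal ultrafilter $\omega$ that detects the failure at all scales $K$ simultaneously. Because $f(\cdot,i)$ is monotone the $S_K$ form a decreasing chain of infinite sets, so the collection $\{S_K\}_{K\in\N}$ together with the Fréchet filter has the finite intersection property. Extend this to a non-principal ultrafilter $\omega$. Then $S_K\in\omega$ for every $K$, hence $\lim_{i\to\omega} f(K,i)\geq\varepsilon_0$ for every $K$. Setting $\varepsilon:=\varepsilon_0/2$, no choice of $K$ can give $\lim_{i\to\omega} f(K,i)\leq\varepsilon$, so $\mathcal{X}$ fails to be hyperfinite along $\omega$, as required.

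The main obstacle is mild: it is the passage from the negation of a universally quantified $\limsup$-statement to a coherent family of "bad" index sets with the finite intersection property. The monotonicity of $f(K,i)$ in $K$ is what makes this clean, and the rest is a standard Fréchet-filter extension argument.
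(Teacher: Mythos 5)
Your proof is correct and follows essentially the same route as the paper: the forward direction via $\lim_{i\to\omega}\leq\limsup_{i\to\infty}$, and the converse by contraposition, manufacturing a non-principal ultrafilter concentrated on the sets of ``bad'' indices. In fact your version is slightly more careful than the paper's: the paper picks a single bad index $i_K$ for each $K$ and takes any non-principal ultrafilter containing $I=\{i_K\}$, which only works because of the monotonicity of the optimal ratio in $K$ (so that cofinitely many elements of $I$ are bad at every fixed scale) --- a point the paper leaves implicit, whereas your decreasing chain $S_K$ together with the Fr\'echet filter and the finite intersection property makes it explicit.
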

	
	\begin{proof}
		If the sequence of graphs $\{X_i\}_{i \in\N}$ satisfies the conditions of hyperfiniteness for all $i\in \N$ then, all properties hold for all $i\in \N$, since $\omega \in \partial\beta\N$ and $$\lim\limits_{i \rightarrow \omega} \frac{|E_i^\varepsilon|}{|V(X_i)|} \leq\limsup\limits_{i \rightarrow \infty} \frac{|E_i^{\varepsilon}|}{|V(X_i)|}.$$ 
		
		Now suppose the opposite, i.e, there is a $\varepsilon_0>0$ such that for all $K>0$, exist an $i_K\in \N$ that for all decompositions of $X_{i_K}$ with connected components of size smaller than $K$ satisfies $\frac{|E_{i_K}^{\varepsilon_0}|}{|V(X_{i_K})|} > \varepsilon_0 $. Note that $i_K$ tends to infinity when $K$ goes to infinity. Consider $I=\{i_K; K\in\N \} \subseteq \N$. Notice that $I$ is an infinite set, then there is a non-principal ultrafilter $\omega_{i}$ containing $I$. Thus, for all decompositions of $X_{i_k}$, we have 
		$$ \lim_{i\to\omega_{i}} \dfrac{|E_{{i_k}}^{\varepsilon_0}|}{|V(X_{{i_k}})|} > \varepsilon_0.$$
		
		That means, $\{X_i\}_{i\in \N}$	is not hyperfinite along $\omega_{i}$, contradicting with the fact that  $\{X_i\}_{i\in \N}$ is hyperfinte along all ultrafilters.	\end{proof}

	For the next result we identify the vertices of the edges that belong to $E_i^\varepsilon$ and prove that hyperfiniteness can be equivalent to removing a set with small measure from the vertex set. Recall that two edges are called \textit{incident} if they share a vertex. In the case of a sequence of finite graphs with bounded degree $d$ we have the following equivalence.
	
	\begin{prop}\label{Elekweakdef}
		Let $\mathcal{X} = \{X_i\}_{i \in\N}$ be a sequence of finite bounded degree graphs. The sequence of graphs $\{X_i\}_{i \in\N}$ is hyperfinite if and only if for every $\varepsilon>0$, there is $K_\varepsilon\in \N $ and $\{Z_i\}_{i\in\N}\subset \{X_i\}_{i\in\N}$ such that $\limsup\limits_{i \rightarrow \infty}\frac{|Z_i|}{|V(X_i)|} \leq \varepsilon$ and if we remove the edges incident to all $Z_i$'s, the resulting graph $\mathcal{X}'$ admits a decomposition  with connected components of size at most $K_\varepsilon$. \end{prop}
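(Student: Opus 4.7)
The plan is to prove both implications by a direct combinatorial argument, where the only piece of data used is the degree bound $d$ of the graphs $X_i$.

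For the forward direction, assume $\mathcal{X}$ is hyperfinite. Given $\varepsilon>0$, apply the definition with parameter $\varepsilon/2$ to obtain $K_{\varepsilon/2}\in\N$ and, for each $i$, a partition into connected components $V(X_i)=A_1^i\cup\cdots\cup A_{n_i}^i$ with $|A_j^i|<K_{\varepsilon/2}$ and $\limsup_i |E_i^{\varepsilon/2}|/|V(X_i)|\leq\varepsilon/2$. Set
\[
Z_i\coloneqq \{v\in V(X_i)\mid v\text{ is an endpoint of some edge in }E_i^{\varepsilon/2}\}.
\]
Since each edge has two endpoints, $|Z_i|\leq 2|E_i^{\varepsilon/2}|$, so $\limsup_i |Z_i|/|V(X_i)|\leq\varepsilon$. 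After removing every edge incident to $Z_i$, any surviving edge $(u,v)$ has $u,v\notin Z_i$, so it does not belong to $E_i^{\varepsilon/2}$; hence $u$ and $v$ lie in the same block $A_j^i$. Consequently every connected component of the modified graph is contained in some $A_j^i$ and has cardinality at most $K_{\varepsilon/2}$.

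For the backward direction, let $d$ be the degree bound and, given $\varepsilon>0$, apply the hypothesis with $\varepsilon/d$ to obtain $K_{\varepsilon/d}$ and subsets $Z_i\subset X_i$ with $\limsup_i |Z_i|/|V(X_i)|\leq \varepsilon/d$ such that the graph $\mathcal{X}'$ obtained by deleting all edges incident to $Z_i$ has connected components of size at most $K_{\varepsilon/d}$. Declare the partition $A_1^i,\dots,A_{n_i}^i$ of $V(X_i)$ to be the set of connected components of $\mathcal{X}'$ (these are connected by construction and have size at most $K_{\varepsilon/d}$). Any edge in $E_i^\varepsilon$ joins different $A_j^i$'s, so it must have been deleted in the passage from $X_i$ to $\mathcal{X}'$; that is, $E_i^\varepsilon$ is contained in the set of edges of $X_i$ incident to $Z_i$, which has cardinality at most $d|Z_i|$. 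Therefore
\[
\limsup_{i\to\infty}\frac{|E_i^\varepsilon|}{|V(X_i)|}\leq d\cdot \limsup_{i\to\infty}\frac{|Z_i|}{|V(X_i)|}\leq \varepsilon,
\]
so $\mathcal{X}$ is hyperfinite with constant $K_\varepsilon\coloneqq K_{\varepsilon/d}$.

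There is no real obstacle: the proof is bookkeeping around the two different ways of measuring ``badness'' (edges crossing a partition versus vertices sitting on crossing edges), and the degree bound is what translates between them. The only subtlety worth flagging explicitly is that in the forward direction the refined components produced after deleting edges incident to $Z_i$ are automatically connected (being contained in the connected sets $A_j^i$ and possibly split further), which is compatible with the original definition of hyperfiniteness requiring a decomposition into connected pieces.
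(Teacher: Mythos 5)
Your proof is correct and follows essentially the same route as the paper's: in one direction take $Z_i$ to be the endpoints of the cut edges $E_i^{\varepsilon'}$, and in the other direction take the components of $\mathcal{X}'$ as the partition and bound the cut edges by the edges incident to $Z_i$. In fact your bookkeeping in the backward direction is the more careful one: you correctly bound $|E_i^\varepsilon|\leq d|Z_i|$ and rescale by $\varepsilon/d$, whereas the paper's proof uses the bound $2|Z_i|$ at this step, which is only valid for degree at most $2$.
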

	
	\begin{proof}
		
		Given $\varepsilon>0$, take $\varepsilon_0 = \dfrac{\varepsilon}{2d}$. Then, there exists a $K_{\varepsilon_0}\in \N$ and a partition with the size of components bounded by $K_{\varepsilon_0}$ with
		$ \limsup\limits_{i \rightarrow \infty} \dfrac{|E_i^{\varepsilon_0}|}{|V(X_i)|} \leq \varepsilon_0 $. Let $Z_i\subset V(X_i)$ be the set of points $x \in V(X_i)$ such that exist a $y\in V(X_i)$ where $(x,y)$ is in the set $E_i^{\varepsilon_0}$. Notice that $|Z_i|\leq 2d |E_i^{\varepsilon_0}|$, where $d$ is the maximum degree of all graphs. Then, $$\limsup\limits_{i \rightarrow \infty}\frac{|Z_i|}{|V(X_i)|} \leq 2d{\varepsilon_0}=\varepsilon,$$ and by construction, $\mathcal{X}'$ has connected components of bounded size.
		
		Conversely, given an $\varepsilon>0$, there exist $K_{\frac{\varepsilon}{2}}$ and a sequence $\{Z_i\}_{i\in\N}\subset \{X_i\}_{i\in\N}$ such that $\limsup\limits_{i \rightarrow \infty}\frac{|Z_i|}{|V(X_i)|} \leq \frac{\varepsilon}{2}$ and the remaining graph $\mathcal{X}'$ has connected components of size bounded by $K_{\frac{\varepsilon}{2}}$. Then, there is a partition $V(X_i \backslash Z_i)=  A_1^i \cup A_2^i \cup \dots \cup A_{n_i}^i$
		such that $|A_l^i|< K_{\frac{\varepsilon}{2}}$, for all $i \in \N$, $1\leq l \leq n_i$. Notice that for each removed edge $(x,y)\in E_i^\varepsilon$ is the same as remove at most 2 points of $Z_i$, then 	
		
		$$ \limsup_{i \rightarrow \infty} \dfrac{|E_i^\varepsilon|}{|V(X_i)|} \leq \limsup_{i \rightarrow \infty} \dfrac{2 |Z_i|}{|V(X_i)|}\leq 2 \cdot \frac{\varepsilon}{2} = \varepsilon.$$\end{proof}

	By the previous lemma, we can also restate the last proposition in terms of ultralimits.
	
	\begin{prop}\label{Elekultrafilter}
		Let $\mathcal{X} = \{X_i\}_{i \in\N}$ be a sequence of finite bounded degree graphs. The sequence of graphs $\{X_i\}_{i \in\N}$ is hyperfinite for every non-principal ultrafilter $\omega \in \partial \beta \N$ if and only if for every $\varepsilon>0$, there are $K_\varepsilon\in \N $ and $\{Z_i\}_{i\in\N}\subseteq\{X_i\}_{i\in\N}$ such that $\lim\limits_{i \rightarrow \omega}\frac{|Z_i|}{|V(X_i)|} \leq \varepsilon$ and if we remove the edges incident to all $Z_i$'s, the resulting graph $\mathcal{X}'$ admits a decomposition  with connected components of size at most $K_\varepsilon$. 
		
	\end{prop}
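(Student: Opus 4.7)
The statement is the ultrafilter analogue of Proposition~\ref{Elekweakdef}, and the plan is to repeat that proof verbatim, replacing $\limsup_{i\to\infty}$ with $\lim_{i\to\omega}$ throughout. This substitution is legitimate because ultralimits of bounded sequences always exist and preserve non-strict inequalities, so every combinatorial inequality of the original argument transfers without change. The role played earlier by Lemma~\ref{hyperultra} in translating between ``hyperfinite'' and ``hyperfinite along every $\omega$'' is now built in: we simply work with a single fixed $\omega$ from the start.

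For the forward direction, I would fix $\varepsilon>0$, let $d$ be a common bound on the vertex degrees in $\mathcal{X}$, and apply hyperfiniteness along $\omega$ with parameter $\varepsilon_0=\varepsilon/(2d)$ to obtain a uniform size bound $K_{\varepsilon_0}$ and partitions $V(X_i)=A_1^i\sqcup\cdots\sqcup A_{n_i}^i$ with $|A_j^i|<K_{\varepsilon_0}$ and $\lim_{i\to\omega}|E_i^{\varepsilon_0}|/|V(X_i)|\le\varepsilon_0$. Taking $Z_i$ to be the set of endpoints of edges in $E_i^{\varepsilon_0}$ yields $|Z_i|\le 2|E_i^{\varepsilon_0}|$, hence
\[
\lim_{i\to\omega}\frac{|Z_i|}{|V(X_i)|}\le 2\varepsilon_0\le\varepsilon.
\]
Deleting all edges of $X_i$ incident to $Z_i$ only removes edges belonging to $E_i^{\varepsilon_0}$, so the original partition is preserved and the resulting graph $\mathcal{X}'$ has connected components of size at most $K_{\varepsilon_0}$.

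For the reverse direction, I would fix $\varepsilon>0$ and apply the hypothesis with parameter $\varepsilon/d$ to obtain $K_{\varepsilon/d}$ together with $\{Z_i\}\subseteq\{X_i\}$ satisfying $\lim_{i\to\omega}|Z_i|/|V(X_i)|\le\varepsilon/d$. Removing the edges incident to $Z_i$ decomposes each $X_i$ into connected components of size at most $K_{\varepsilon/d}$; the edges of the corresponding $E_i^\varepsilon$ (those joining distinct components) are precisely the edges incident to some vertex of $Z_i$, of which there are at most $d|Z_i|$. Consequently
\[
\lim_{i\to\omega}\frac{|E_i^\varepsilon|}{|V(X_i)|}\le d\cdot\frac{\varepsilon}{d}=\varepsilon,
\]
which gives hyperfiniteness along $\omega$.

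No genuine obstacle is expected here; the only care required is the bookkeeping of the degree bound $d$ in the two chosen constants, together with the elementary observation that an ultralimit is dominated by any eventual upper bound of the sequence. The proof is essentially a translation of Proposition~\ref{Elekweakdef} from a $\limsup$ to an $\omega$-limit, made possible because the graphs have uniformly bounded degree.
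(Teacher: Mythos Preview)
Your approach is correct and matches the paper's, which does not give an explicit proof but simply remarks that Proposition~\ref{Elekweakdef} can be restated in terms of ultralimits via Lemma~\ref{hyperultra}; your direct rewriting of that argument with $\lim_{i\to\omega}$ in place of $\limsup_{i\to\infty}$ is exactly what is intended.

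One small correction in the forward direction: the sentence ``Deleting all edges of $X_i$ incident to $Z_i$ only removes edges belonging to $E_i^{\varepsilon_0}$'' is not accurate. A vertex of $Z_i$ may also be incident to edges lying entirely inside some $A_j^i$, and those get deleted too. What you actually need (and what is true) is the opposite inclusion: every edge of $E_i^{\varepsilon_0}$ has both endpoints in $Z_i$ and is therefore removed. Hence the resulting graph has no edges between distinct $A_j^i$'s, so each connected component is contained in some $A_j^i$ and thus has size at most $K_{\varepsilon_0}$. Removing additional edges inside the $A_j^i$'s can only refine the components further, which is harmless.
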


\subsubsection{Hyperfiniteness of the coarse boundary groupoid}

	Hyperfinitiness in a measure equivalence relation has been studied for the last sixty years, usually restricted to standard probabilistic spaces, like in \cite{kechris2004topics}. Most of the ideas of this section are adaptations of \cite{marks2017short} but for the equivalence relation related to the measurable coarse boundary groupoid $(\partial G(X),\hat{\mu}_\omega)$ of a space of finite graphs with bounded degree $X$.
	
 The adaptation of the classical ideas requires some work because $({\partial G(X)},\hat{\mu}_\omega)$ is an equivalence relation over a non-standard probability space $\partial \beta X$. On the other hand, this equivalence relation is still countable, i.e, every equivalence class $[x]=\{y;(x,y)\in {\partial G(X)} \}$ is countable, for every $x\in \partial\beta X$. 
	
	\begin{defi}
		A countable equivalence relation $E$ on a measure space $(X,\mu)$ is \textit{hyperfinite} if exists a nested sequence $E_0\subseteq E_1\subseteq \dots $ of finite subequivalence relations such that $E=\bigcup\limits_{n\in\N} E_n$.
	\end{defi}

	As $(X,\mu)$ is a measure space, we can define the notion of hyperfinitiness almost everywhere.	A countable equivalence relation $E$ on $(X,\mu)$ is said to be $\mu-$\emph{hyperfinite} if there is a $\mu-$conull set $A \subset X$, such that $E\upharpoonright A  = E \cap (A \times A)$ is hyperfinite. In this case, we also say that $E= \cup_n E_n$ \emph{almost everywhere} or $E$ is equal to the union of $E_n$'s up to null sets.

	Note that all hyperfinite equivalence relations are $\mu-$hyperfinite. In particular, if $E$ is a hyperfinite equivalence relation on $X$, we can extract an increasing exhaustive sequence $\{E_n \}_{n \in \N}$ of finite subrelations such that $|[x]_{E_n}|\leq n $, for all $x\in X$ and $n\in \N$, that keeps $E$ as a hyperfinite equivalence relation related with this new sequence. Indeed, given an increasing exhaustive sequence of finite subequivalence relations $\{F'_n \}_{n \in \N}$  such that $E=\cup_n F'_n$. Define $k_n(x)= \max \{k \in\N ; |[x]_{F'_k}|\leq n  \} $ and now we take the subrelation:
	$$ E_0=\Delta_X = \{(x,x)\in X\times X\} $$
	$$ (x,y) \in E_n \Longleftrightarrow (x,y) \in F'_{k_n(x)} .$$
	
	It is easy to see that this is a nested increasing exhaustive sequence, and by definition, the sequence $\{E_n \}_{n \in \N}$ has the required property on the equivalence class level. Moreover, if $X$ is a metric space, for a $K\in \N$ big enough, we can extract a subequivalence relation $F\subseteq E$ with $\diam ([x]_F) \leq K$, for almost every $x\in X$. Recall that our coarse boundary groupoid has a metric coming from the graphs. Formally, in our particular case of the coarse boundary groupoid:
	
	\begin{prop}\label{mu-hyperEqui}
		The equivalence relation ${\partial G(X)}$ is $\mu_\omega$-hyperfinite if and only if for every $\varepsilon>0$ and every $S>0$, there is a $K\in\N$ and a finite subequivalence relation $F\subseteq {\partial G(X)}$ such that $$ \mu_\omega(\{x \in \partial \beta X ; \diam([x]_F)<K\})> 1- \varepsilon$$ and $$ \mu_\omega(\{x \in \partial \beta X ; \forall y \in \partial \beta X \text{ such that } d(x,y) < S \Longrightarrow (x,y)\in F \})> 1- \varepsilon
		.$$
	\end{prop}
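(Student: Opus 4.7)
The plan is to establish each direction separately, with the forward direction by truncation of a hyperfinite exhaustion and the backward direction by a Borel--Cantelli exhaustion argument followed by a standard descriptive-set-theoretic step.

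For the forward direction, assume $\partial G(X)$ is $\mu_\omega$-hyperfinite and start from a nested exhausting sequence $E_0 \subseteq E_1 \subseteq \dots$ of finite subequivalence relations with $|[x]_{E_n}| \leq n$ almost surely (obtainable as recalled just before the statement). Given $\varepsilon, S > 0$, first pick $n$ large enough that $\hat{\mu}_\omega(\overline{E_S} \setminus E_n) < \varepsilon/2$; this is possible because $\overline{E_S}$ has finite $\hat{\mu}_\omega$-mass by bounded geometry and $\overline{E_S} \setminus E_n \searrow \emptyset$ in measure. A Markov-type bound on the fibre counting description of $\hat{\mu}_\omega$ then yields
\[
\mu_\omega\bigl(\{x : B_S(x) \not\subseteq [x]_{E_n}\}\bigr) < \varepsilon/2.
\]
Since each class $[x]_{E_n}$ is finite almost surely, the measurable map $x \mapsto \diam([x]_{E_n})$ is $\mu_\omega$-a.e.\ finite; pick $K > 2S$ with $\mu_\omega(\{x : \diam([x]_{E_n}) \geq K\}) < \varepsilon/2$. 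Finally, define $F$ by setting $[x]_F \coloneqq [x]_{E_n}$ when $\diam([x]_{E_n}) < K$ and $[x]_F \coloneqq \{x\}$ otherwise. This is well defined (the diameter predicate is constant on each $E_n$-class), is a finite subequivalence relation of $\partial G(X)$, and satisfies both required measure bounds.

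For the backward direction, apply the hypothesis with $\varepsilon = 2^{-n}$ and $S = n$ for each $n$ to obtain finite subequivalence relations $F_n \subseteq \partial G(X)$ with $\mu_\omega(A_n) > 1 - 2 \cdot 2^{-n}$ for the good set $A_n \coloneqq \{x : B_n(x) \subseteq [x]_{F_n}\}$. By Borel--Cantelli, $\mu_\omega$-almost every $x$ lies in $A_n$ for all sufficiently large $n$; since every $y \in [x]_{\partial G(X)}$ is at finite fibre-distance from $x$, this forces $\partial G(X) = \bigcup_n F_n$ modulo $\hat{\mu}_\omega$-null sets.

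The step I expect to be the main obstacle is the closing move of the backward direction: upgrading this (not necessarily nested) exhaustion $\bigcup_n F_n = \partial G(X)$ to a nested exhausting sequence of finite subequivalence relations, which is what the definition of $\mu_\omega$-hyperfiniteness literally demands. This is a standard manoeuvre in the descriptive set theory of countable Borel equivalence relations on probability spaces; the only real subtlety here is that the base space $(\partial \beta X, \mu_\omega)$ is non-standard, but $\partial G(X)$ is still a countable Borel equivalence relation endowed with a probability measure, so the diagonal argument recorded in \cite{marks2017short} transfers and produces the required nested exhaustion, completing the proof.
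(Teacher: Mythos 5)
Your argument is correct and follows essentially the same route as the paper: the forward direction extracts $F$ from a nested exhaustion $E_0\subseteq E_1\subseteq\dots$ and controls the two bad sets separately, and the backward direction runs the hypothesis along $\varepsilon_n=2^{-n}$, $S_n=n$ and sums the tails. Your forward direction differs only cosmetically: you truncate the large-diameter classes to singletons to make \emph{every} $F$-class have diameter $<K$, whereas the paper keeps $F=E_N$ untouched and gets the diameter bound from the convergence of $\sum_n\mu_\omega(D^{-1}(n))$; both are fine. The one step you defer -- upgrading the non-nested family $\{F_n\}$ to a nested exhaustion -- does not actually require importing anything from the descriptive-set-theoretic literature: given your Borel--Cantelli conclusion that $\mu_\omega$-a.e.\ $x$ lies in $A_n$ for all sufficiently large $n$, simply set $E_n\coloneqq\bigcap_{i\geq n}F_i$. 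Each $E_n$ is a finite subequivalence relation (being contained in $F_n$), the sequence is automatically nested, and for a.e.\ $x$ and any $y\sim x$ with $d(x,y)=d$ one has $(x,y)\in F_i$ for all $i\geq\max(N(x),d+1)$, hence $(x,y)\in E_{\max(N(x),d+1)}$; this is exactly the paper's closing move, and it turns your flagged ``main obstacle'' into a one-line observation.
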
 
	
	\begin{proof}
		Assume that ${\partial G(X)}$ is a $\mu_\omega-$hyperfinite equivalence with respect to the nested sequence of finite subequivalence relations $\{E_n\}_{n \in \N}$. Given $\varepsilon>0$ and $S>0$, define, for every $n\in\N$, the set
		$$Y_n=\{x \in \partial \beta X ; \forall y \in \partial \beta X \text{ such that } d(x,y) < S \Longrightarrow  (x,y )\in E_n \}. $$
		
		As $E_n$ is a nested increasing sequence, we have $Y_n\subseteq Y_{n+1}$ in $\partial\beta X$. Since ${\partial G(X)}$ is equal to $ \cup_n E_n$ almost everywhere, for some $N \in \N$ big enough, we must have $\mu_\omega(Y_N)> 1 - \varepsilon$. So, we take $F\coloneqq E_N$ as the finite subequivalence relation.
		
		For all $x \in \partial \beta X$, take $D_x=\max\limits_{ (x,y) \in F } d(x,y)$, that is bounded by the diameter of the equivalence class $[x]_F$. We can see $D$ as a measurable map from $\partial \beta X$ to $\N$ that send every $x$ to $D_x$. Note that
		
		$$ 1=\sum_{n\in \N}  \mu_\omega(D^{-1}(n)).$$ 
		
		By the convergence of this series and for our given $\varepsilon$, there is a $K\in \N$ satisfying 
		
		$$\sum\limits_{n>K}  \mu_\omega(D^{-1}(n))< \varepsilon.$$ 
		
		Thus,
		\begin{equation*}
			\begin{split}
				1& = \mu_\omega (\partial \beta X) \\ 
				&= \mu_\omega(\{x \in \partial \beta X ; \diam([x]_F)<K\})  + \mu_\omega(\{x \in \partial \beta X ; \diam([x]_F)\geq K\})\\
				&= \mu_\omega(\{x \in \partial \beta X ; \diam([x]_F)<K\})  + \mu_\omega(\cup_{n>K} D^{-1}(n))\\
				& \leq  \mu_\omega(\{x \in \partial \beta X ; \diam([x]_F)<K\})  + \sum\limits_{n>K}\mu_\omega( D^{-1}(n))\\
				&< \mu_\omega(\{x \in \partial \beta X ; \diam([x]_F)<K\})  + \varepsilon .
			\end{split}
		\end{equation*}	
		As we wished.

		Conversely, take $\varepsilon_n = \frac{1}{2^n}$ and $S_n=n$. So, there is a sequence of finite subequivalence relations $\{F_n\}_{n \in \N} \subseteq {\partial G(X)}$ such that, for all $n\in \N$, $$ \mu(\{x \in \partial \beta X : \forall y \in \partial \beta X \text{ such that } d(x,y) < n \Longrightarrow (x,y )\in F_n \})> 1- \varepsilon_n.$$ Note that every $F_n$ has finite classes but we do not necessarily have a nested sequence. To fix that, define $E_n= \bigcap\limits_{i\geq n}F_i$. Clearly, this is a nested sequence of finite equivalence relations. Take  $$Y_n=\{x \in \partial \beta X ; \forall y \in \partial \beta X \text{ such that } d(x,y) < S_n \Longrightarrow  (x,y )\in E_n \}.$$ Noitce that $\cup_{n\in\N} Y_n$ is a conull set. Indeed,
		\begin{equation*}
			\begin{split}
				\mu_\omega(Y_n^c)&=\mu_\omega(\{x \in\partial \beta X; \exists y \in \partial \beta X \text{ such that } d(x,y) < n \Longrightarrow  (x,y)\notin E_n \}) \\
				&= \mu_\omega(\{x \in\partial \beta X; \exists y \in \partial \beta X \text{ such that } d(x,y) < n \Longrightarrow  (x,y)\in \cup_{i\geq n} F_i^c\} )\\
				&\leq \sum_{i\geq n}\mu_\omega(\{x \in\partial \beta X; \exists y \in \partial \beta X \text{ such that } d(x,y) < n \Longrightarrow  (x,y)\in F_i^c\}\\
				&\leq \sum_{i\geq n} \dfrac{1}{2^i} \leq \dfrac{1}{2^{n-1}} \longrightarrow 0.
			\end{split}
		\end{equation*}
		Thus, by definition, ${\partial G(X)}$ is a $\mu_\omega$-hyperfinite equivalence relation over $(\partial \beta X, \mu_\omega)$.\end{proof}

	In a similar way as we did in the Proposition \ref{Elekweakdef} for sequences of graphs, we get another equivalence:

	\begin{cor}\label{Elekcor}
		 The equivalence relation ${\partial G(X)}$ is $\mu_\omega-$hy\-per\-fi\-nite if and only if for all $\varepsilon>0$, there is $K \in \N$ and a subset $Z\subset \partial \beta X$ such that $\mu_\omega(Z)<\varepsilon$ and the connected components of ${\partial G(X)} \upharpoonright Z^c$ have size of at most $K$, where ${\partial G(X)} \upharpoonright Z^c= {\partial G(X)}\cap (Z^c\times Z^c)$ are the edges that are not incident to $Z$. 
	\end{cor}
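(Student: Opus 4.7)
My plan is to mirror the graph-theoretic argument of Proposition~\ref{Elekweakdef} in the measurable setting, using Proposition~\ref{mu-hyperEqui} as the primary tool. Throughout, I shall interpret ``connected components of $\partial G(X)\upharpoonright Z^c$'' as the connected components of the nearest-neighbor graphing inherited from the ultraproduct graph structure on $\partial\beta X$, i.e.\ the equivalence relation on $Z^c$ generated by pairs $(x,y)$ with $d_\omega(x,y)\leq 1$; this is the only interpretation compatible with the stated cardinality bound, since $\partial G(X)$-classes are typically infinite. Let $d$ denote a uniform bound on the vertex degrees of the graphs $X_i$ and write $b_R$ for a corresponding bound on the cardinality of balls of radius $R$.

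For the forward direction, I would fix $\varepsilon>0$, apply Proposition~\ref{mu-hyperEqui} with parameters $\varepsilon/2$ and $S=1$, and obtain $K_0\in\N$ together with a finite subequivalence relation $F\subseteq\partial G(X)$ such that the set
\[
A=\{x:\diam([x]_F)<K_0\}\cap\{x:\forall y,\ d(x,y)<1\Rightarrow(x,y)\in F\}
\]
has $\mu_\omega(A)>1-\varepsilon$. Setting $Z=A^c$ and $K=b_{K_0}$, I claim that the components of $Z^c$ are contained in single $F$-classes: indeed, along any nearest-neighbor path $x=x_0,\dots,x_n=y$ lying in $Z^c$, the second defining property of $A$ forces $(x_{i-1},x_i)\in F$ at each step, so $y\in[x]_F$ by transitivity, bounding the component size by $|[x]_F|\leq b_{K_0}=K$.

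For the converse, I would assume the decomposition property and verify the two conditions of Proposition~\ref{mu-hyperEqui}. Given $\varepsilon>0$ and $S>0$, I apply the hypothesis with $\varepsilon'=\varepsilon/b_{2S}$ to produce $K'$ and $Z\subseteq\partial\beta X$ with $\mu_\omega(Z)<\varepsilon'$ and nearest-neighbor components in $Z^c$ of size at most $K'$. I then enlarge $Z$ to $\widetilde Z=\{x:B_{2S}(x)\cap Z\neq\emptyset\}$; invariance of $\hat\mu_\omega$ applied to the set $\{(x,y)\in\partial G(X):y\in Z,\ d(x,y)\leq 2S\}$ bounds its range-count by $b_{2S}\mu_\omega(Z)$ and its source-count from below by $\mu_\omega(\widetilde Z)$, yielding $\mu_\omega(\widetilde Z)\leq b_{2S}\mu_\omega(Z)<\varepsilon$. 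Define $F$ to be the equivalence relation whose classes on $Z^c$ are the nearest-neighbor components and whose classes on $Z$ are singletons; this is a finite subequivalence relation of $\partial G(X)$ with diameter at most $K'$. For any $x\in\widetilde Z^c$ and $y$ with $d(x,y)<S$, any geodesic from $x$ to $y$ lies inside $B_{2S}(x)\subseteq Z^c$, so $x$ and $y$ share a nearest-neighbor component and $(x,y)\in F$, verifying the local condition on $\widetilde Z^c$.

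The main technical hurdle is the converse: the hypothesis controls only a removable set of \emph{points}, whereas Proposition~\ref{mu-hyperEqui} also demands local agreement of $F$ with $\partial G(X)$ up to distance $S$. This gap is bridged by the thickening $\widetilde Z$, whose measure is controlled using the invariance of $\hat\mu_\omega$ together with the uniform bound on ball cardinalities coming from bounded degree.
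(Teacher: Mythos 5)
Your reading of ``connected components'' as components of the nearest-neighbour graphing is the right one (it is exactly how the corollary is used in Theorem \ref{hyperhyper}), and your overall strategy is sound. However, there is one step in the forward direction that fails as written: you invoke Proposition \ref{mu-hyperEqui} with $S=1$ and then claim that for consecutive points $x_{i-1},x_i$ of a nearest-neighbour path the condition ``$d(x_{i-1},y)<1\Rightarrow(x_{i-1},y)\in F$'' forces $(x_{i-1},x_i)\in F$. Since the boundary metric is integer-valued (it is the ultralimit of graph length metrics, and the space is uniformly discrete), $d(x,y)<1$ means $x=y$, so the condition with $S=1$ is vacuous and says nothing about edges, which sit at distance exactly $1$. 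The fix is trivial -- take $S=2$ (as the paper does), so that the local-agreement set controls all pairs with $d(x,y)\leq 1$ -- and with that change your argument goes through. Indeed, modulo this off-by-one your forward direction is \emph{more} complete than the paper's, which only uses the diameter condition and leaves implicit why a path in $Z^c$ cannot leave an $F$-class; you correctly intersect both sets from Proposition \ref{mu-hyperEqui} and track the passage from diameter to cardinality via the ball bound $b_{K_0}$.

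Your converse takes a genuinely different route from the paper's. The paper constructs a nested exhausting sequence $E_n=\partial G(X)\upharpoonright\bigcup_{i\le n}Z_i^c$ directly from the sets $Z_n$ obtained at $\varepsilon=2^{-n}$ and appeals to the definition of $\mu_\omega$-hyperfiniteness, whereas you verify the two conditions of Proposition \ref{mu-hyperEqui} by thickening $Z$ to $\widetilde Z=\{x:B_{2S}(x)\cap Z\neq\emptyset\}$ and controlling $\mu_\omega(\widetilde Z)\le b_{2S}\,\mu_\omega(Z)$ via the invariance of $\hat\mu_\omega$. This costs you the extra counting argument but buys a cleaner verification of the local-agreement condition for arbitrary $S$ (something the paper's direct construction does not address), and it avoids the delicate point of whether restrictions to unions of the $Z_i^c$ still have uniformly finite components. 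The only small implicit ingredient is the existence of geodesics in the source fibers, which is supplied by the isometry with the ultraproduct in Proposition \ref{isometric}.
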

	\begin{proof}
		
		By the last proposition, given $\varepsilon>0$ and $S=2$, there is a $K\in\N$ and finite subequivalence relation $F\subseteq R_{\partial G(X)}$ such that, $ \mu(\{x \in \partial \beta X ; \diam([x]_F)<K\})> 1- \varepsilon$. Take the set $Z=\partial \beta X \setminus \{x \in \partial\beta X; \diam([x]_F)<K \} $. By construction, $\mu_\omega(Z)<\varepsilon$ and the components of ${\partial G(X)}$ without the edges incidents to $Z$ have size of at most $K$.
		
		On the other hand, we need to construct the chain of finite subequivalence relation of ${\partial G(X)}$.
		Take $\varepsilon=\frac{1}{2^n}$, so there exists $K_n\in \N$ and $Z_n\subseteq\partial\beta X$ such that $\mu_\omega(Z_n)<\frac{1}{2^n}$ and the components of ${\partial G(X)} \upharpoonright Z^c_n$ have size bounded by $K_n$. Define the subrelations
		$$E_0= \Delta_{\partial \beta X}= \{(x,x) \in \partial \beta X\times \partial \beta X\}.$$
		$$E_{n}= {\partial G(X)} \upharpoonright Z^c_1 \cup \dots \cup {\partial G(X)} \upharpoonright Z^c_n = {\partial G(X)}\upharpoonright \cup_{i=1}^n Z_i^c .$$
		
		Clearly, $E_n\subseteq E_{n+1}$. It just rests to prove that the union of $Z_n^c$ is a conull set. Fix $n\in\N$,

		$$ \mu_\omega\bigg( \big(\bigcup\limits_{i=1}^n Z_i^c \big)^c \bigg)=\mu_\omega\bigg( \bigcap^n_{i=1} Z_i \bigg) \leq \mu (Z_n) \leq  \frac{1}{2^n}  \overset{n \to \infty}{\longrightarrow}0.$$

		Thus, $({\partial G(X)},\mu_\omega)$ is $\mu_\omega-$hyperfinite.\end{proof}
	
	As expected, the coarse boundary groupoid takes a hyperfinite sequence of graphs and induces the hyperfiniteness on the equivalence relation and vice versa.

	\begin{thm}\label{hyperhyper}
		Let $\mathcal{X}=\{ X_i\}_{i\in \N}$ be a sequence of finite bounded degree graphs and $X$ the related space of graphs. Then, the sequence $\{ X_i\}_{i\in \N}$ is hyperfinite if and only if for every non-principal ultrafilter $\omega \in \partial\beta \N$, the equivalence relation $( {\partial G(X)}, \hat{\mu}_\omega) $ is $\mu_\omega$-hy\-per\-fi\-ni\-te.
	\end{thm}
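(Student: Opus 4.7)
The plan is to prove both implications by relating the partition-based characterisations of hyperfiniteness (Propositions~\ref{Elekultrafilter} and~\ref{Elekweakdef}) with the $\mu_\omega$-hyperfiniteness criteria (Proposition~\ref{mu-hyperEqui} and Corollary~\ref{Elekcor}), using the fact that elements of $\partial\beta X$ and measurable subsets thereof are ultralimits of data living on the finite graphs $X_i$. Lemma~\ref{subsetUltra} is the technical bridge that allows one to pass between the two points of view.

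For the forward direction, I would assume $\mathcal{X}$ is hyperfinite, fix a non-principal ultrafilter $\omega$, and verify the criterion of Proposition~\ref{mu-hyperEqui}. Given $\varepsilon>0$ and $S>0$, writing $d$ for a degree bound on $\mathcal{X}$, pick $\varepsilon' := \varepsilon/d^{S+1}$ and invoke Proposition~\ref{Elekweakdef} to obtain $K_{\varepsilon'}\in\N$ and $Z_i\subseteq X_i$ with $\lim_{i\to\omega}|Z_i|/|X_i|\leq\varepsilon'$ whose removal leaves components of size $\leq K_{\varepsilon'}$. Letting $F_i$ be the equivalence relation on $X_i$ whose classes are these components, the ultralimit $F := [F_i]_\omega$ is a finite subequivalence relation of $\partial G(X)$ with all classes of cardinality $\leq K_{\varepsilon'}$, giving condition~(a). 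For~(b), any $x_i$ at graph-distance $\geq S$ from $Z_i$ has its entire $S$-ball contained in a single $F_i$-class, so the bad points are contained in the $S$-neighbourhood of $Z_i$, which has cardinality at most $d^{S+1}|Z_i|$ and hence $\mu_\omega$-measure at most $\varepsilon$ in the limit.

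For the reverse direction, by Lemma~\ref{hyperultra} it suffices to check hyperfiniteness along each fixed $\omega$. Given $\varepsilon>0$, Corollary~\ref{Elekcor} produces $K\in\N$ and $Z\subseteq\partial\beta X$ with $\mu_\omega(Z)<\varepsilon$ such that every class of $\partial G(X)\upharpoonright Z^c$ has cardinality $\leq K$; using Lemma~\ref{subsetUltra} I may assume $Z=[Z_i]_\omega$. Let $B_i\subseteq X_i\setminus Z_i$ be the set of vertices whose component in $X_i$ (after removing edges incident to $Z_i$) has more than $K$ vertices. Then setting $Z_i':=Z_i\cup B_i$, the components of $X_i$ after deleting edges incident to $Z_i'$ have size $\leq K$ by construction, and the criterion of Proposition~\ref{Elekultrafilter} follows as soon as one shows $\lim_{i\to\omega}|B_i|/|X_i|=0$.

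This claim is the main obstacle, and I would argue it by contradiction. Assuming $\tilde{B}:=[B_i]_\omega$ has positive $\mu_\omega$-measure (note $\tilde{B}\subseteq Z^c$, since $B_i\cap Z_i=\emptyset$), I pick $y=[y_i]_\omega\in\tilde{B}$ with $y_i\in B_i$ and run a breadth-first search inside the component of $y_i$: since this component has $\geq K+1$ vertices, I obtain distinct vertices $y_i^0=y_i,y_i^1,\dots,y_i^K$ all at graph-distance $\leq K$ from $y_i$. Their ultralimit classes $y^j := [y_i^j]_\omega$ then lie in $Z^c$ and in $[y]_{\partial G(X)}$, and are pairwise distinct (being discrete vertices at positive distance in each $X_i$); this produces $K+1$ elements of $[y]_{\partial G(X)}\cap Z^c$, contradicting the hypothesis. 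The delicate points are using BFS to force bounded diameter, so that $d_\omega(y,y^j)<\infty$, and controlling distinctness in the ultraproduct, which uses the uniform discreteness of the $X_i$.
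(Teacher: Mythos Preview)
Your proof is correct and follows the same overall strategy as the paper's: use Lemma~\ref{subsetUltra} to transport the small set $Z$ between the graph sequence and the boundary, and check that bounded-component decompositions on one side induce the same on the other. The reverse implications are essentially identical --- your BFS construction is precisely the explicit content of the fibre isometry in Proposition~\ref{isometric}, which the paper invokes as a black box to derive the same contradiction (the paper's set $C_i$ is your $B_i$, and you are in fact slightly more careful than the paper in explicitly enlarging $Z_i$ to $Z_i'=Z_i\cup B_i$ before applying Proposition~\ref{Elekultrafilter}). The only substantive difference is in the forward direction: the paper goes directly through Corollary~\ref{Elekcor}, setting $Z=[Z_i]_\omega$ and using Proposition~\ref{isometric} to argue that no component of $\partial G(X)\upharpoonright Z^c$ can exceed $K_\varepsilon$, whereas you verify the two-parameter criterion of Proposition~\ref{mu-hyperEqui}, which costs you the extra $S$-neighbourhood estimate but spares you the fibre-isometry argument in that direction. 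The paper's route is a bit more economical since Corollary~\ref{Elekcor} is the exact boundary analogue of Proposition~\ref{Elekweakdef}, making the two directions symmetric; your route trades that symmetry for a more hands-on verification.
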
	
	\begin{proof}
		Given $\varepsilon >0$, by Proposition \ref{Elekweakdef}, there exists $K_{\varepsilon} \in \N$ and 
		$ Z_i \subseteq V(X_i)$ such that
		$ \limsup\limits_{i \rightarrow \infty} \frac{|Z_i|}{|V(X_i)|} \leq \varepsilon $ and all graphs without the incident points of $\{Z_i\}_{i\in \N}$ have components with size smaller than $K_{\varepsilon}$. Fix a non-principal ultrafilter $\omega \in \partial \beta\N$, by Lemma \ref{subsetUltra}, there is a set $Z
		=[Z_i]_\omega \subseteq \partial \beta X$ such that $\mu_\omega(Z) = \lim\limits_{i\to\omega}\mu_i(Z_i)$.  That means, $\mu_\omega(Z)\leq \varepsilon.$
		
		We need to prove that all connected components of $R_{\partial G(X)} \upharpoonright Z^c$ have size smaller than $K_{\varepsilon}$. Suppose that one component, namely $D$, has size bigger than $K_{\varepsilon}$. Take $x \in D\subseteq Z^c$, there is sequence $(x_i)_{i\in \N}\in \{Z_i^c\}_{i\in \N}$ such that $x=\lim\limits_{i\to\omega}x_i$. By Proposition \ref{isometric}, we have an isometry between $\bigg(\prod\limits_{i\to\omega} X_i,x_i\bigg)$ and $\big(\partial G(X)\big)_x $. This implies that the size of the component that $x_i$ belongs to is bigger than $K_{\varepsilon}$, which is a contradiction because $x_i \in X_i\setminus Z_i$. Thus, by Corollary \ref{Elekcor}, $( R_{\partial G(X)}, \hat{\mu}_\omega) $ is $\mu_\omega-$hy\-per\-fi\-ni\-te.

		Conversely, take a non-principal ultrafilter $\omega\in \partial\beta\N$ and suppose that $( R_{\partial G(X)}, \hat{\mu}_\omega) $ is $\mu_\omega-$hy\-per\-fi\-ni\-te. Given $\varepsilon>0$, there is a $K_{\varepsilon}\in\N$ and $Z\subset \partial\beta X$ such that $\mu_\omega(Z)<\varepsilon$ and the components of $E=R_{\partial G(X)}\upharpoonright Z^c$ have size of at most $K_{\varepsilon}$.  We just need to prove that those properties are still holding asymptotically along the non-principal ultrafilter $\omega\in \partial\beta\N$. As $Z\subseteq\partial\beta X$, by Lemma \ref{subsetUltra}, there is a sequence $\{Z_i\}_{i\in \N} \subseteq \{X_i\}_{i\in \N}$, such that $\mu_\omega(Z \triangle [Z_i]_\omega)=0$. Let $\mathcal{X'}$ be the subsequence of graphs obtained from the sequence $\mathcal{X}$ by deleting the edges incident to $\{Z_i\}_{i\in \N}$. By construction, $$\mu_\omega([Z_i]_\omega)=\lim\limits_{i\to\omega} \frac{|Z_i|}{|V(X_i)|} \leq \varepsilon.$$ 
		
		Notice that all components of $\mathcal{X'}$ have size at most $K_{\varepsilon}$. Indeed, for every $x\in X_i$, define $C_x$ as the component that $x\in X_i$ belongs in $\mathcal{X'}$ and $C_i= \{x\in X_i; |C_x| \geq K_{\varepsilon} \}\subseteq \mathcal{X'}$. We claim that $(\mu_i( C_i))_{i\in\N}$ tends to $0$, when $i\to \infty$.
		Suppose the opposite, i.e, there is a $\alpha_0>0$ such that $(\mu_i( C_i))_{i\in\N}$ converges to $\alpha_0$. Again by Lemma \ref{subsetUltra}, there is a $C'=[C_i]_\omega \subseteq \partial\beta X \setminus Z$ with $\mu_\omega(C')=\alpha_0$. Notice that if $x \in C'$ then $|C'_x|\geq K_{\varepsilon_0}$, by the same argument using the isometry. This is a contradiction, since $C'\subseteq \partial\beta X \setminus Z$. 
		
		Thus, by Proposition \ref{Elekweakdef}, the sequence of graphs $\{X_i\}_{i\in \N}$ is hyperfinite along a non-principal ultrafilter $\omega \in \partial \beta \N$. Since $( R_{\partial G(X)}, \hat{\mu}_\omega) $ is $\mu_\omega-$hy\-per\-fi\-ni\-te, for every non-principal ultrafilter $\omega \in \partial\beta \N$, by Lemma \ref{hyperultra}, we have that the sequence of graphs $\{X_i\}_{i\in \N}$ is hyperfinite.\end{proof}

	\subsubsection{Other characterizations of hyperfiniteness}

	The Connes--Feldman--Weiss Theorem is a known equivalence between $\mu-$hyperfinteness and measurable amenability of a countable Borel equivalence relation on a standard probability space. Another famous characterization of hy\-per\-fi\-ni\-te\-ness is due to Kai\-ma\-novich, that calculates the isoperimetric constant of the sets. In this section, we define the isoperimetric constant in the set of equivalence relations and present a modified version of both theorems. We said modified because we will restrict to the coarse boundary groupoid where the measure is on a non-standard probability space $(\partial\beta X, \mu_\omega)$ but the result still holds. The proof of the classical Theorems can be found in \cite{marks2017short}, where Marks uses the Connes-Feldman-Weiss Theorem to prove Kaimanovich's Theorem.

	By the theory developed in \cite{carderi2018non}, it is easy to see that the standard Reiter's condition of measurable amenability (as we defined in Definition \ref{AmeGroupoid}) is applicable to our coarse boundary groupoid $(\partial G(X),\hat{\mu}_\omega)$ even over the non-standard space $(\partial \beta X, \mu_\omega)$, where $X$ is the space of finite bounded degree graphs. As $\partial G(X)$ and $R_{\partial G(X)}$ coincide, we interchange the notation for convenience. 
	
	\begin{thm}[Connes--Feldman--Weiss theorem for the coarse boundary groupoid] \label{CFW}
		Let $R_{\partial G(X)}$ be the countable equivalence relation on $(\partial\beta X,\mu_\omega)$ for some non-principal ultrafilter $\omega \in \partial\beta \N$. Then, the equivalence relation $R_{\partial G(X)}$ on $(\partial \beta X, \mu_\omega)$ is $\mu_\omega$-hyperfinite if and only if the coarse boundary groupoid $({\partial G(X)},\hat{\mu}_\omega)$ is measurably amenable.
	\end{thm}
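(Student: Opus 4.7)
The plan is to establish the two implications separately, with the implication from $\mu_\omega$-hyperfiniteness to measurable amenability being the direct one, and the converse requiring a reduction to the classical Connes--Feldman--Weiss theorem.

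For the implication $\mu_\omega$-hyperfinite $\Rightarrow$ measurably amenable, I would construct explicit Reiter functions from the hyperfinite exhaustion. Given a nested sequence $(E_n)_{n\in\N}$ of finite subequivalence relations with $R_{\partial G(X)}=\bigcup_n E_n$ almost everywhere (and, as noted after the definition of $\mu$-hyperfiniteness, with $|[x]_{E_n}|\leq n$ for almost every $x$), define $\phi_n\in L^\infty(\partial\beta X,\ell^1(R_{\partial G(X)}))$ by
\[ \phi_n(x,y)=\frac{1}{|[x]_{E_n}|}\mathbf{1}_{E_n}(x,y). \]
Each $\phi_n(x,\cdot)$ is a probability vector on the $E_n$-class of $x$, so the normalization condition $(1)$ of Definition \ref{AmeGroupoid} holds. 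For the asymptotic invariance, if $(y,z)\in R_{\partial G(X)}$ then $(y,z)\in E_n$ for all sufficiently large $n$ (off a null set), hence $[y]_{E_n}=[z]_{E_n}$ and $\phi_n(y,\cdot)=\phi_n(z,\cdot)$, so that $\|\phi_n(y,\cdot)-\phi_n(z,\cdot)\|_1=0$ eventually. Dominated convergence then yields condition $(2)$ of Definition \ref{AmeGroupoid}.

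For the converse, I would reduce to the classical Connes--Feldman--Weiss theorem. Although $(\partial\beta X,\mu_\omega)$ is not a standard Borel probability space, both conditions in question are countably detectable: a Reiter sequence is a countable family of $\ell^1$-valued measurable functions, and a hyperfinite exhaustion is a countable family of finite subequivalence relations. By the ultraproduct framework of \cite{carderi2018non}, any such countable family can be realised within a separable measure sub-algebra of $L^\infty(\partial\beta X,\mu_\omega)$ that, by Maharam's classification of separable measure algebras, is isomorphic to $L^\infty$ of a standard probability space. This isomorphism transports the restriction of $R_{\partial G(X)}$ to a countable Borel equivalence relation on a standard probability space, preserving measurable amenability (as the Reiter sequence lives in the sub-algebra by construction). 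Applying the classical Connes--Feldman--Weiss theorem as presented in \cite[Section~2]{marks2017short} gives a hyperfinite exhaustion on the standard model, which pulls back to a $\mu_\omega$-hyperfinite exhaustion of $R_{\partial G(X)}$ via the characterisation in Proposition \ref{mu-hyperEqui} and Corollary \ref{Elekcor}.

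The main obstacle is the second direction, specifically verifying that measurable amenability on the non-standard space genuinely transfers to amenability on a standard model, and conversely that hyperfiniteness descends. This hinges on showing that the measurable Reiter data and the countable family of subequivalence relations generate a common separable sub-algebra closed under the equivalence relation, so that the classical Borel machinery applies without essential loss; this is essentially the content of the ultraproduct-of-measure-spaces formalism in \cite{carderi2018non}, which justifies the ``even over the non-standard space'' remark that precedes the theorem statement.
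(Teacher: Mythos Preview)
Your proposal is correct and follows essentially the same strategy as the paper. The first direction is identical: the paper defines the same averaging functions $\phi_n^x(y)=\frac{1}{|[x]_{E_n}|}\mathbf{1}_{[x]_{E_n}}(y)$ and checks normalization and asymptotic invariance exactly as you do.

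For the converse, both you and the paper reduce to the classical Connes--Feldman--Weiss theorem via a standard Borel model furnished by \cite{carderi2018non}, but the mechanism differs slightly. The paper invokes \cite[Theorem~3.28]{carderi2018non} directly: the coarse boundary groupoid is a p.m.p.\ groupoid over $(\partial\beta X,\mu_\omega)$, hence admits a p.m.p.\ \emph{factor} $(E,\mu')$ on a standard Borel probability space that is \emph{weakly equivalent} to $(\partial G(X),\hat\mu_\omega)$. Amenability passes to factors, classical CFW gives $\mu'$-hyperfiniteness of $(E,\mu')$, and weak equivalence transports hyperfiniteness back. Your route via a separable measure sub-algebra containing the Reiter data and Maharam's theorem is morally the same reduction, but the paper's use of the off-the-shelf factor/weak-equivalence statement is cleaner: weak equivalence is precisely engineered so that properties like amenability and hyperfiniteness transfer in both directions, so no additional verification (such as your appeal to Proposition~\ref{mu-hyperEqui} and Corollary~\ref{Elekcor}) is needed for the pull-back step.
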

	\begin{proof}
		Let $R_{\partial G(X)}$ be $\mu_\omega-$hyperfinite, then there exists a nested sequence $E_0\subseteq E_1\subseteq \dots $ of finite subequivalence relations such that $R_{\partial G(X)}=\cup_{n\in\N} E_n$, up to null sets. For every $n\in \N$, define $\phi_n: \partial \beta X \to \ell^1 (\partial G (X), \lambda^x )$ by $$ 
		x \longrightarrow \phi_n^x(y)= \bigg\{\begin{array}{cc}
			0, & \text{if } y \notin [x]_{E_n} \\
			\frac{1}{|[x]_{E_n}|}, & \text{if } y \in [x]_{E_n}.
		\end{array}\bigg. $$	
		It is easy to see that $\phi_n$ is normalized and the sequence $ |\phi_n^x(a)-\phi_n^y(a)|$ tends to zero, when $n$ goes to infinity, since $E_n$ is growing. Thus, the equivalence relation is measurably amenable.

		Conversely, following the ideas of \cite{carderi2018non}, one can see that the coarse boundary groupoid $({\partial G(X)},\hat{\mu}_\omega)$ is a p.m.p. groupoid over the probability space $(\partial \beta X,\mu_\omega)$, that is, a groupoid equiped with a $\sigma-$algebra $B$ and a measure $\hat{\mu}_\omega$ such that the fiber spaces $(\partial G(X), B,\hat{\mu}_\omega,s)$ and $(\partial G(X), B,\hat{\mu}_\omega,r)$ are countable with measurable inverse map and measurable multiplication map. In particular, it is realizable because it comes from a sequence of finite graphs with bounded degree. By Theorem 3.28 of \cite{carderi2018non}, the coarse boundary groupoid admits a p.m.p factor $(E,\mu')$ on a standard Borel probability space that is weakly-equivalent to $(\partial G(X),\hat{\mu}_\omega)$.
		
		If the coarse boundary groupoid $({\partial G(X)},\hat{\mu}_\omega)$ is measurably amenable, we have that the factor map is a homomorphism that maps $\hat{\mu}_\omega$ to $\mu'$ and commutes with source, range, product and inversion map, then the factor $(E,\mu')$ is amenable. Applying the classical Connes-Feldman-Weiss Theorem, $(E,\mu')$ is $\mu'$-hyperfinite. By the weak-equivalence with the factor, we have that $(R_{\partial G(X)},\hat{\mu}_\omega)$ is $\mu_\omega$-hyperfinite.\end{proof}

	\subsubsection{Property A on average}

	Inspired by the definition of measurable amenability on the coarse groupoid, we define the related coarse property along the sequence of finite graphs. We called it \emph{property A on average}. It turns out that this property is weaker than Property A for sequence of graphs, but it seems to be the sufficient condition to imply the measurable amenabilty of the coarse boundary groupoid.  In the end, this property is also equivalent to property almost-A and hyperfiniteness. 
	
	\begin{defi} Let $\mathcal{X}=\{X_i\}_{i \in \N}$ be a sequence of finite bounded degree graphs where $X$ the related space of graphs. A sequence of graphs $\{X_i\}_{i\in\N}$ has \emph{property A on average}, if for every $\varepsilon>0$, there exists $S>0$ and a map $\xi:X \to \ell^1(X)$ such that:
		\begin{enumerate}[$(1)$]
			\item $||\xi_x ||_1=1$, for all $x\in X$;
			\item For all $x\in X$, we have $\supp(\xi_x) \subseteq B_S(x)$, i.e, the support of $\xi_x$ is contained in an ball of radius $S$ around $x$ ;
			\item and  $$\limsup_{i \rightarrow \infty} \dfrac{1}{|X_i|} \sum_{x \in X_i} \sum_{ \substack{y \in X_i \\ (x,y)\in E(X_i); }} ||\xi_x - \xi_y ||_1 < \varepsilon. $$
		\end{enumerate} 
	\end{defi}
	
	Notice that the only difference between property A (Definition \ref{propA}) and property A on average is the respective third condition. It is clear that property A implies property A on average. Similar to the hyperfiniteness, we have an equivalent definition along a non-principal ultrafilter $\omega \in \partial \beta \N$
	
	\begin{defi}\label{onavg}
		Let $\mathcal{X}=\{X_i\}_{i \in \N}$ be a sequence of finite bounded degree graphs, $X$ the related space of graphs and a non-principal ultrafilter $\omega \in\partial\beta \N$. The sequence $\mathcal{X}$ of graphs has \textit{property A on average along the non-principal ultrafilter $\omega \in \partial \beta \N$}, if for every $\varepsilon>0$, exists $S>0$ and a map $\xi:X \to \ell^1(X)$ such that:
		\begin{enumerate}[$(1)$]
			\item $||\xi_x ||_1=1$, for all $x\in X$;
			\item For all $x\in X$, we have $\supp(\xi_x) \subseteq B_S(x)$, i.e, the support of $\xi_x$ is contained in an ball of radius $S$ around $x$ in the space of graphs $X$;
			\item and  $$ \lim_{i\to\omega} \dfrac{1}{|X_i|} \sum_{x \in X_i} \sum_{ \substack{y \in X_i \\ (x,y)\in E(X_i) }} ||\xi_x - \xi_y ||_1 < \varepsilon. $$
		\end{enumerate} 
	\end{defi}

	The proof of the equivalence between $\mathcal{X}=\{X_i\}_{i\in\N}$ has property A on average and property A on average along every non-principal ultrafilter $\omega \in \partial \beta \N$ is similar to Lemma \ref{hyperultra}. The next proposition justifies the motivation behind property A on average, since the amenability on the coarse boundary groupoid implies this coarse property on the space of graphs.

	\begin{lemma}\label{AmenaONavg}
		Let $\mathcal{X}=\{X_i\}_{i\in\N}$ be a sequence of finite bounded degree graphs and $\partial G(X)$ the respective boundary groupoid.
		If the coarse boundary groupoid $({\partial G (X)},\hat{\mu}_\omega)$ is measurable amenable for a non-principal ultrafilter $\omega \in \partial \beta \N$, then $\mathcal{X}$ has property A on average along $\omega\in\partial\beta\N$. 
	\end{lemma}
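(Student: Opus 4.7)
The plan is to start from the Reiter-type functions witnessing measurable amenability, truncate them to bounded support, and transport the result to the finite graphs via the approximation machinery (Lemma \ref{appfunc}).

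Fix $\varepsilon>0$. By measurable amenability (Definition \ref{AmeGroupoid}), there exist $\phi_n\in L^\infty(\partial\beta X,\ell^1(\partial G(X),\lambda^x))$ which are fibrewise probability measures and satisfy the Reiter condition. Since $\partial G(X)$ is (essentially) a subgroupoid of the pair groupoid on $\partial\beta X$, the translation operators $L_g$ are identifications between $\ell^1([s(g)])$ and $\ell^1([r(g)])$, so testing the Reiter condition against $f=\chi_{\overline{E_1}}\in L^1(\partial G(X),\hat\mu_\omega)$ reduces to
\[
\int_{\overline{E_1}} \|\phi_n^x-\phi_n^y\|_1\, d\hat\mu_\omega(x,y)\ \xrightarrow{n\to\infty}\ 0.
\]
Pick $n_0$ large enough that this integral is below $\varepsilon/4$.

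Next, truncate to bounded support. Since each $\phi_n^x$ is a probability measure on the equivalence class $[x]$ (which inherits a metric from $d_\omega$), monotone convergence yields
\[
\int_{\partial\beta X}\phi_{n_0}^x\bigl([x]\setminus B_S(x)\bigr)\,d\mu_\omega(x)\ \xrightarrow{S\to\infty}\ 0,
\]
so we may pick $S$ large with this integral at most $\varepsilon/8$. Set $Z(x):=\phi_{n_0}^x(B_S(x))$ and, on the conull set where $Z(x)>0$, define $\tilde\phi^x:= Z(x)^{-1}\phi_{n_0}^x\cdot \chi_{B_S(x)}$; on the remaining null set, put $\tilde\phi^x=\delta_x$. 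A routine estimate shows that truncating and renormalizing a pair of probability measures whose $\ell^1$-distance is $\delta$ and whose $B_S$-masses are $1-\alpha,1-\alpha'$ produces measures whose $\ell^1$-distance is at most $\delta/(1-\max(\alpha,\alpha'))+O(\alpha+\alpha')$; integrating and combining the bounds above gives
\[
\int_{\overline{E_1}} \|\tilde\phi^x-\tilde\phi^y\|_1\, d\hat\mu_\omega(x,y)\ <\ \varepsilon.
\]
The function $\tilde\phi\in L^\infty(\partial G(X),\hat\mu_\omega)$ is supported in $\overline{E_S}$.

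Now apply Lemma \ref{appfunc}: there is a representing sequence $\phi_i\colon X_i\times X_i\to\R$ with $\tilde\phi|_{\overline{E_k}}=[\phi_i|_{E^i_k}]_\omega$ for each $k$. In particular $\phi_i$ is asymptotically supported in $E^i_S$, and $\omega$-almost all fibre sums $\sum_{y\in X_i}\phi_i(y,x)$ equal $1$. Define $\xi\colon X\to \ell^1(X)$ by $\xi_x(y):=\phi_i(y,x)$ for $x,y$ in the same component $X_i$, truncating and renormalizing the asymptotically negligible set of $x$ where either $\|\xi_x\|_1\ne 1$ or $\supp\xi_x\not\subset B_S(x)$ (this set has vanishing density along $\omega$ by the approximation property). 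By construction $\xi_x$ is a probability vector supported in $B_S(x)$, and, using Lemma \ref{subsetUltra} to identify $[E^i_1]_\omega$ with $\overline{E_1}$ and the counting measures with $\hat\mu_\omega$,
\[
\lim_{i\to\omega}\frac{1}{|X_i|}\sum_{x\in X_i}\sum_{\substack{y\in X_i\\ (x,y)\in E(X_i)}}\|\xi_x-\xi_y\|_1
\ =\ \int_{\overline{E_1}}\|\tilde\phi^x-\tilde\phi^y\|_1\,d\hat\mu_\omega\ <\ \varepsilon,
\]
which is Definition \ref{onavg}.

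The main obstacle is the truncation step: one must control the $\ell^1$-error introduced by cutting $\phi_n$ to $B_S$ and renormalizing, so that the Reiter-type average-invariance survives. This is where choosing $S$ \emph{after} $n$ (exploiting that for a \emph{single} fixed $n$, the tail mass tends to zero in $S$) is essential; trying to choose $S$ uniformly would fail, just as for ordinary Reiter sequences on infinite groups.
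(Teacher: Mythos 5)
Your proposal is correct and follows essentially the same route as the paper: extract a Reiter function $\phi_{n_0}$ from measurable amenability, push it down to the graphs via Lemma \ref{appfunc}, and identify the averaged edge-sum with the integral over $\overline{E_1}$ against $\hat\mu_\omega$. In fact you are more careful than the paper at the one delicate point — the paper asserts "without loss of generality" that the lifted vectors are uniformly supported in $S$-balls, whereas you justify this by truncating and renormalizing at the groupoid level first (only minor constant-chasing remains, e.g.\ the set where $Z(x)=0$ has small but possibly positive measure, and the degree factor when integrating the tail masses over $\overline{E_1}$).
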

	\begin{proof}
		given a non-principal ultrafilter $\omega \in\partial \beta \N$ ans $\varepsilon>0$, for some $n\in\N$, exists a map $\eta\coloneqq \phi_n\in L^\infty(\partial\beta X,\ell^1(\partial G(X),\lambda^x))$ from the amenability of $\partial G(X)$ such that for every $x\in\partial G (X)$
		
		$$ \sum_{y\in G^{x}} \sum_{z\in G^{s(y)}} f(x) |\eta(yz)-\eta(z)| \leq \varepsilon.$$

		By Lemma \ref{appfunc},  we can approximate the map $\eta$ by a sequence of maps $\eta_i \in L^\infty(X_i, \ell^1(X_i\times X_i,\lambda^{x_i}))$ such that $\lim\limits_{i\to\omega}\eta_i=\eta$. Even more, if we write $y= [(x_i,a_i)]_\omega$, $z=[(a_i,b_i)]_\omega$, we have	
		$$\varepsilon \geq  \lim_{i\to\omega} \dfrac{1}{|X_i|} \sum_{x_i\in X_i}\sum_{a_i\in X_i} \sum_{b_i\in X_i} f_i(x_i) |\eta_i(x_i,b_i)-\eta_i(a_i,b_i)|$$ 
		
		Define $\eta_i (x_i,b_i)= \colon \xi_{x_i} (b_i)$ and take $f_i$ to be an approximation of the constant function equals to one.
		
		\begin{equation*}
			\begin{split}
				\varepsilon& \geq \lim_{i\to\omega} \dfrac{1}{|X_i|} \sum_{x_i \in X_i}  \sum_{a_i\in X_i} \sum_{b_i\in X_i} | \xi_{x_i} (b_i)- \xi_{a_i} (b_i)|\\	
				& = \lim_{i\to\omega} \dfrac{1}{|X_i|}  \sum_{x_i \in X_i}  \sum_{a_i \in X_i}   ||\xi_{a_i} - \xi_{x_i} ||_1 \\
				&  = \lim_{i\to\omega} \dfrac{1}{|X_i|}  \sum_{x_i \in X_i}  \sum_{ \substack{a_i \in X_i \\ (x_i,a_i)\in E(X_i); }} ||\xi_{a_i} - \xi_{x_i} ||_1.
			\end{split}
		\end{equation*}
		
		This completes the proof, since ${\xi_x}_i $ is normalized by the normalization of $\eta$, clearly, the sequence of maps ${\xi_x}_i \in \ell^1(X_i)$ can be taken, without lost of generality, such that they are uniformly supported in balls of radius $S$.
		Thus, the map $\xi$ satisfies the properties of Definition \ref{onavg}. Hence, $\mathcal{X}$ has Property A on average along the non-principal ultrafilter $\omega\in\partial\beta \N$.\end{proof}


	This Lemma show us that the measurable amenability of the coarse boundary groupoid is a sufficient condition for the space of graphs have property A on average. In \cite{kaiser2019combinatorial}, Kaiser introduced another coarse property along of the sequence of graphs that easily implies that the coarse boundary groupoid is measurable amenable. In his paper, he introduces the following property:

	\begin{defi}\label{propertyalmostAdef}
		A sequence $\mathcal{X}=\{X_i\}_{i\in\N}$ of finite bounded degree graphs has \textit{property almost-A} if there exist subsets $V_i \subset V(X_i)$ such that $\lim\limits_{i \rightarrow \infty }\frac{|V_i|}{|V(X_i)|}=0$ and $\{X_i\backslash V_i\}_{i\in\N}$ has property A.
	\end{defi}
	
	Take $X$ to be the space of graphs of the sequence of graphs $\{X_i\}_{i\in\N}$ with property almost-A and $X'$ the space of graphs related to the sequence the subgraphs $\{X_i\backslash V_i\}_{i\in\N}$ that have property A.  Using our notation of the coarse boundary groupoid, it is easy to see that $\partial G(X)$ and $\partial G(X')$ are measurable isomorphic, since they come from the same graphs except for this sequence of subgraphs with measure zero along the limit. Thus, both groupoids are measurable amenable, since $X'$ have property A. In this short remark we didn't mention the dependency on the ultrafilter, but it follows easily if we consider the property A along every non-principal ultrafilter.
	
	\begin{lemma}\label{almostaultra}
		A sequence $\mathcal{X}=\{X_i\}_{i\in\N}$ of finite bounded degree graphs has property almost-A if and only if  $\mathcal{X}$ has property almost-A along every non-principal ultrafilter $\omega\in\partial\beta\N$, i.e, for every non-principal ultrafilter $\omega \in\partial \beta \N$ and for every $i\in \N$, there exist subsets $V_i \subset V(X_i)$ such that $\lim\limits_{i \rightarrow \omega }\frac{|V_i|}{|V(X_i)|}=0$ and the sequence $\{X_i\backslash V_i\}_{i\in \N}$ has property A.
	\end{lemma}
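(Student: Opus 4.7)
The plan is to mirror the structure of Lemma~\ref{hyperultra}.

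For the forward direction, suppose $\mathcal{X}$ has property almost-A, witnessed by subsets $V_i \subset V(X_i)$ satisfying $\lim_{i\to\infty} |V_i|/|V(X_i)| = 0$ and such that $\{X_i \setminus V_i\}_{i\in\N}$ has property A. Then the same sequence $(V_i)$ witnesses property almost-A along any non-principal ultrafilter $\omega \in \partial\beta\N$, since
$\lim_{i\to\omega} |V_i|/|V(X_i)| \leq \limsup_{i\to\infty} |V_i|/|V(X_i)| = 0$,
and property A of $\{X_i \setminus V_i\}_{i\in\N}$ is a condition uniform in $i$ which does not depend on $\omega$.

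For the reverse direction I would argue by contrapositive, following the ultrafilter-extraction template used in the reverse direction of Lemma~\ref{hyperultra}. Assume $\mathcal{X}$ does not have property almost-A; then every sequence $(V_i)_{i\in\N}$ with $\lim_{i\to\infty} |V_i|/|V(X_i)| = 0$ yields a family $\{X_i \setminus V_i\}_{i\in\N}$ failing property A. The goal is to produce a non-principal ultrafilter $\omega_0$ along which property almost-A also fails, contradicting the hypothesis.

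The principal combinatorial tool I would invoke is the hereditarity of property A under subspaces: if $(V_i^{(1)})$ and $(V_i^{(2)})$ both make the complement sequences satisfy property A, then so does their pointwise union $(V_i^{(1)} \cup V_i^{(2)})$, as $\{X_i \setminus (V_i^{(1)} \cup V_i^{(2)})\}$ is a subspace of $\{X_i \setminus V_i^{(1)}\}$ with uniform control on the constants. The main obstacle, absent from Lemma~\ref{hyperultra}, is the uniform-in-$\varepsilon$ nature of property A: unlike hyperfiniteness, whose $\forall\varepsilon\,\exists K$ quantifier structure readily localizes to per-index failures, property A requires one sequence $(V_i)$ to work uniformly across all $\varepsilon>0$. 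To bypass this I would proceed level-by-level with $\delta_n=1/n$: at each $n$, either there exists a witness $(V_i^{(n)})$ with $\limsup_i |V_i^{(n)}|/|V(X_i)| \leq \delta_n$ and property A on $\{X_i \setminus V_i^{(n)}\}$ with constants $S^{(n)}_\varepsilon$ controllable uniformly in $n$, in which case a diagonalization $V_i = V_i^{(n(i))}$ with $n(i)\to\infty$ slowly, combined with hereditarity to absorb the superfluous $V^{(m)}$ for $m<n(i)$, yields a single sequence $(V_i)$ with $\lim_{i\to\infty} |V_i|/|V(X_i)| = 0$ and $\{X_i \setminus V_i\}_{i\in\N}$ having property A, contradicting the failure of property almost-A; or the construction breaks down at some level $\delta_{n_0}$, meaning the set of indices along which every ``small'' candidate witness fails property A at a fixed level $\varepsilon_0$ is infinite, and extending this set to a non-principal ultrafilter $\omega_0$ yields failure of property almost-A along $\omega_0$.
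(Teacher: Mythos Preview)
Your forward direction is correct and matches the paper's. Your reverse direction follows the same contrapositive-plus-ultrafilter-extraction template as the paper, and the concern you raise about the uniform-in-$\varepsilon$ quantifier structure of property~A is legitimate; in fact the paper's own argument is loose on precisely this point (it opens the contrapositive with ``there is an $\varepsilon'>0$ such that for all $V_i$ with $|V_i|/|V(X_i)|\leq\varepsilon'$\dots'', which is not the literal negation of property almost-A and already presupposes the diagonalization step you are worried about).

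That said, your proposed resolution has a gap. The dichotomy you pose --- either witnesses $(V_i^{(n)})$ exist at every density level $\delta_n$ with property-A constants $S^{(n)}_\varepsilon$ \emph{controllable uniformly in $n$}, or else the construction ``breaks down at some level $\delta_{n_0}$'' --- is not exhaustive. The troublesome scenario is that a witness $(V_i^{(n)})$ exists at \emph{every} level $\delta_n$, but the constants $S^{(n)}_\varepsilon$ blow up as $n\to\infty$. In that case there is no fixed level at which witnesses cease to exist, so you cannot isolate an infinite ``bad'' index set from which to generate $\omega_0$; yet the naive diagonal $V_i=V_i^{(n(i))}$ carries no uniform bound $S_\varepsilon$, so you cannot conclude property~A of $\{X_i\setminus V_i\}$ either. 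Your hereditarity observation is correct --- enlarging the removed set preserves property~A with the \emph{same} constants, as in Lemma~\ref{Removing} --- but it acts in the wrong direction here: enlarging $V_i^{(n(i))}$ to contain some fixed $V_i^{(m)}$ would let you inherit the fixed constants $S^{(m)}_\varepsilon$, at the price of adding $|V_i^{(m)}|/|V(X_i)|$ to the density, whose $\limsup$ is $\delta_m>0$ rather than $0$. The two desiderata (vanishing density versus bounded constants) pull against each other under hereditarity, and neither your sketch nor the paper's argument closes this tension directly.
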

	\begin{proof}
		Obviously, if $\mathcal{X}$ has property almost-A then has property almost-A along every non-principal ultrafilter $\omega \in\partial \beta \N$. Suppose the opposite for the converse, i.e, there is an $\varepsilon'>0$ where all subsets $V_i\subset V(X_i)$ such that $\frac{|V_i|}{|V(X_i)|}\leq \varepsilon'$, the sequence $ \mathcal{X'}=\{X_i\setminus V_i\}_{i\in\N}$ don't have property A. That means, there is an $\varepsilon_0>0$ that for all $S\in\N$ and all normalized function $\eta: X'\to \ell^1(X')$ with $\supp(\eta_x)\subseteq B_S(x)$ satisfies $||\eta^i_x-\eta^i_y ||_1\geq \varepsilon_0$ for all $(x,y)\in E(X'_i)$, where $X'$ the space of graphs of the sequence $\{X_i\backslash V_i\}_{i\in\N}$.
		
		For $\varepsilon_0$, take a natural sequence of $S $ going to infinity, then, there is a sequence of index $i_S$ going to infinity such that all normalized functions $\eta^S: X'_{i_S}\to \ell^1(X'_{i_S})$ with $\supp(\eta^S_x)\subseteq B_{S}(x)$ such that $||\eta^S_x-\eta^S_y ||_1\geq \varepsilon_0$, for all $(x,y)\in E(X'_{i_S})$.
		Clearly, we have an infinite set $I_S=\{i_S\in\N\}$ satisfying above conditions, thus, there is a non-principal ultrafilter $\omega \in \partial \beta \N$ that contains $I_S$. We have a contradiction with the fact that $\{X_i\}_{i\in \N}$ has property almost-A along every non-principal ultrafilter $\omega\in\partial \beta \N$.\end{proof}
	
	To prove that property A on average and property almost-A are equivalent, we use a method introduced in \cite[Theorem 4.4]{kaiser2019combinatorial}.

	\begin{lemma}[Kaiser's removing points method]\label{Removing}
		Let $X$ be finite bounded degree graph with pro\-per\-ty A and $\xi: X \to \ell^1(X)$ is the map satisfying the conditions of property A for some fixed $\varepsilon$ and $S$. Given a proper subgraph $W \subset X$, there exists $\xi': X\setminus W \to \ell^1(X\setminus W)$ satisfying the conditions of property A for $\varepsilon$ and $M = \max\limits_{y \in X} |B_S(y)| $, that means, $X\setminus W$ still has property A.
	\end{lemma}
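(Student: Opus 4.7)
The plan is to build $\xi'$ as a pushforward of $\xi$. Fix a map $\phi\colon X\to X\setminus W$ that is the identity on $X\setminus W$, and set $\xi'_x := \phi_\ast \xi_x$ for every $x\in X\setminus W$. Normalization $\|\xi'_x\|_1=1$ is immediate, and for any edge $(x,y)\in E(X\setminus W)\subseteq E(X)$ we get
\[
\|\xi'_x-\xi'_y\|_1 \;=\; \|\phi_\ast(\xi_x-\xi_y)\|_1 \;\leq\; \|\xi_x-\xi_y\|_1 \;<\; \varepsilon,
\]
because the pushforward is an $\ell^1$-contraction on signed measures on $X$. Both of these items hold for any choice of $\phi$, so the full content of the lemma lies in the support condition $\supp(\xi'_x) \subseteq B_M^{X\setminus W}(x)$, and the problem reduces to choosing $\phi$ carefully.

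For the support, observe that $\supp(\xi'_x) \subseteq \phi(B^X_S(x))$, a set of cardinality at most $|B^X_S(x)|\leq M$. I would define $\phi$ by a BFS procedure: fix an enumeration of $X$ for tiebreaking and, for each $w\in W$, let $\phi(w)$ be the first vertex of $X\setminus W$ encountered by a BFS in $X$ started at $w$. Crucially, if $w\in B^X_S(x)$ with $x\in X\setminus W$, then the BFS from $w$ reaches $x$ within $S$ steps, so $\phi(w)$ necessarily lies in the connected component $C_x$ of $x$ in $X\setminus W$; in particular $\phi(B^X_S(x))\subseteq C_x$, which means the support of $\xi'_x$ never leaks into other components and the notion of distance in $X\setminus W$ is well-behaved.

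The main obstacle is to upgrade the cardinality bound $|\phi(B^X_S(x))|\leq M$ into the distance bound $\phi(B^X_S(x))\subseteq B^{X\setminus W}_M(x)$. The key elementary fact is that in any connected graph, the first $k$ vertices visited by a BFS from a basepoint lie within distance $k-1$ of the basepoint; so if the at most $M$ images of $\phi|_{B^X_S(x)}$ coincide with an initial segment of a BFS from $x$ inside $C_x$, they automatically lie in $B^{X\setminus W}_{M-1}(x) \subseteq B^{X\setminus W}_M(x)$. Arranging $\phi$ globally so that, simultaneously for every $x\in X\setminus W$, the mass carried over from $W$ lands in such an initial BFS frontier of $C_x$ is the technical heart of the argument; the Lipschitz and normalization conditions are, by the above, already handled automatically by the pushforward structure.
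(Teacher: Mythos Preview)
The paper does not actually supply a proof of this lemma; it cites \cite[Theorem 4.4]{kaiser2019combinatorial} and leaves the argument to Kaiser. So there is no in-paper proof to compare against, and the question is simply whether your argument is correct.

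It is not, and the gap is structural rather than cosmetic. You reduce the lemma to the existence of a \emph{single global} retraction $\phi\colon X\to X\setminus W$ with $\phi(B_S^X(x))\subseteq B_M^{X\setminus W}(x)$ for every $x\in X\setminus W$, and then say that ``arranging $\phi$ globally \ldots\ is the technical heart of the argument'' without carrying it out. In fact no such $\phi$ exists in general. Take the path $a\!-\!b\!-\!c\!-\!d\!-\!e$ with $W=\{c\}$ and $S=1$, so $M=3$. For $x=b$ we need $\phi(c)\in B_3^{X\setminus W}(b)=\{a,b\}$, while for $x=d$ we need $\phi(c)\in B_3^{X\setminus W}(d)=\{d,e\}$; these constraints are incompatible. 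The same example refutes your intermediate claim that BFS from $w\in B_S^X(x)$ forces $\phi(w)\in C_x$: the BFS from $c$ reaches $b$ and $d$ simultaneously, and tiebreaking can send $\phi(c)$ to the wrong component.

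The upshot is that a pushforward by a map independent of $x$ cannot work; any correct argument must redistribute the $W$-mass of $\xi_x$ in an $x$-dependent way (e.g.\ push each $\xi_x|_W$ towards $x$ along paths in $X$, or renormalise $\xi_x|_{X\setminus W}$ after first ensuring enough mass survives there). Once the redistribution is allowed to depend on $x$, your clean ``$\ell^1$-contraction'' bound for item (iii) is no longer automatic and has to be recovered by hand. That, rather than the support condition, is where the real work in Kaiser's argument lies.
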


	\begin{rem}
	The above lemma appeared as part of \cite[Theorem 4.4]{kaiser2019combinatorial}, where Kaiser proves that a sequence of finite bounded degree graphs has property A then it is also hyperfinite.We remark here that as a byproduct of our results one obtains an easier way to prove it. Indeed, by {\cite[Theorem 5.3]{skandalis2002coarse}}, property A for $X$ implies that $\partial G(X)$ is topologically amenable. Thus, $(\partial G(X),\hat{\mu}_\omega)$ is measurably amenable, for every non-principal ultrafilter $\omega \in \partial \beta \N$. Now, by Theorem \ref{CFW}, the induced equivalence relation $({\partial G(X)},\hat{\mu}_\omega)$ is hyperfinite and thus the sequence of graphs $\{X_i\}_{i\in\N}$ is hyperfinite. 
	\end{rem}
	Most of the results from \cite{kaiser2019combinatorial} can be described in terms of the coarse boundary groupoid. Moreover, with the above method we can prove that the property almost-A matchs our definition of property A on average.

	\begin{prop}\label{OnAvgAlmostA}
		A sequence of finite bounded degree graphs $\mathcal{X}=\{X_i\}_{i\in\N}$ has property A on average along a non-principal ultrafilter $\omega\in\partial\beta\N$ if and only if $\mathcal{X}$ has property almost-A along the non-principal ultrafilter $\omega \in \partial \beta \N$.
	\end{prop}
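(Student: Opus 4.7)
\textbf{The $(\Leftarrow)$ direction} is concrete and direct. Suppose $\mathcal{X}$ has property almost-A along $\omega$, witnessed by sets $V_i \subset V(X_i)$ with $\lim_{i\to\omega} |V_i|/|X_i| = 0$ such that the coarse union $X' = \sqcup_i (X_i \setminus V_i)$ has property A. Let $d$ be the uniform degree bound. Given $\varepsilon > 0$, apply property A of $X'$ at tolerance $\varepsilon/(2d)$ to obtain a constant $S>0$ and a function $\eta : X' \to \ell^1(X')$ verifying the three conditions of Definition \ref{propA}. Extend $\eta$ to $\xi : X \to \ell^1(X)$ by setting $\xi_x := \eta_x$ when $x \in X_i \setminus V_i$ and $\xi_x := \delta_x$ when $x \in V_i$. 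Conditions (1) and (2) of Definition \ref{onavg} are immediate. For condition (3), split the edge sum: ordered pairs of neighbors both outside $V_i$ contribute at most $(\varepsilon/(2d)) \cdot d \cdot |X_i| = (\varepsilon/2)|X_i|$, while ordered pairs with at least one endpoint in $V_i$ number at most $2d |V_i|$ and each contributes at most $2$. Dividing by $|X_i|$ and taking the limit along $\omega$ yields $\varepsilon/2 + 4d \cdot 0 < \varepsilon$, as required.

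\textbf{The $(\Rightarrow)$ direction} is more delicate and will be handled by routing through the measurable amenability of the coarse boundary groupoid, in the spirit of the remark following Lemma \ref{Removing}. Given property A on average along $\omega$, for each $n$ I extract $\xi^{(n)} : X \to \ell^1(X)$ with parameter $S_n$ whose average defect (along $\omega$) is bounded by $\varepsilon_n$, where $\varepsilon_n$ will be chosen suitably small. Taking an ultraproduct of the $\xi^{(n)}$ over the graphs $X_i$ produces maps $\phi_n \in L^\infty(\partial\beta X, \ell^1(\partial G(X), \lambda^x))$ which are $\hat\mu_\omega$-almost everywhere normalized and whose Reiter defect (integrated against any $L^1$ test function) tends to zero as $n \to \infty$, essentially by reversing the ultrafilter calculation of Lemma \ref{AmenaONavg}. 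This shows that $(\partial G(X),\hat\mu_\omega)$ is measurably amenable. Then, by Theorem \ref{CFW}, the equivalence relation $(\partial G(X), \hat\mu_\omega)$ is $\mu_\omega$-hyperfinite, and by the forward direction of the proof of Theorem \ref{hyperhyper}, the sequence $\mathcal{X}$ is hyperfinite along $\omega$.

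To close the loop, I use Kaiser's removing points method (Lemma \ref{Removing}) to convert hyperfiniteness along $\omega$ into property almost-A along $\omega$. Concretely, hyperfiniteness along $\omega$ gives, for each $n$, sets $Z_i^{(n)} \subset X_i$ with $\lim_{i\to\omega} |Z_i^{(n)}|/|X_i| \leq 1/n$ such that all connected components of $X_i \setminus Z_i^{(n)}$ have size at most $K_n$. A standard diagonal argument along $\omega$ (choosing $A_n = \{i : |Z_i^{(n)}|/|X_i| < 1/n\} \in \omega$, forming the decreasing chain $B_n = \bigcap_{k\leq n} A_k \in \omega$ and setting $N(i) = \sup\{n : i \in B_n\}$, so that $N(i) \to \infty$ along $\omega$) produces a single sequence $V_i = Z_i^{(N(i))}$ with $\lim_{i\to\omega} |V_i|/|X_i| = 0$. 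For each $\varepsilon > 0$, one builds a property-A witness on $\sqcup_i(X_i \setminus V_i)$ by working componentwise and invoking Lemma \ref{Removing} to upgrade the trivial property-A functions of the bounded components (at parameter $\varepsilon_n = 1/n$ and support radius $K_n$) while passing to the subgraph $X_i \setminus V_i$; the uniform parameters for the whole sequence are then $(\varepsilon, M_\varepsilon)$ with $M_\varepsilon = \max_{y \in X_i \setminus V_i} |B_{K_n}(y)|$.

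\textbf{The main obstacle} lies in the final step, namely ensuring that a single choice of $V_i$ (not depending on $\varepsilon$) yields property A of $\{X_i \setminus V_i\}_{i\in\N}$ \emph{globally}, with the uniform constants $(\varepsilon, M_\varepsilon)$ required in Definition \ref{propA}. Because the component bound $K_{N(i)}$ of $X_i \setminus V_i$ may grow with $i$, one cannot naively take the normalized characteristic function of the component as the property-A witness; this is precisely where Lemma \ref{Removing} is needed to ensure that after removing extra points the support radius can be chosen uniformly in $i$ for each fixed $\varepsilon$. Taking care to match the Kaiser-style iteration with the diagonal construction of $V_i$ is the technical heart of the argument.
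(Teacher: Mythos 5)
Your ``$(\Leftarrow)$'' direction is correct and is in fact a more self-contained argument than the paper's: the paper deduces this implication by passing through measurable amenability of $\partial G(X)$ (property almost-A $\Rightarrow$ amenability of the boundary groupoid $\Rightarrow$ property A on average via Lemma \ref{AmenaONavg}), whereas you extend the property-A witness of $\{X_i\setminus V_i\}$ by $\delta_x$ on $V_i$ and estimate the edge sum directly; the count ($\varepsilon/2$ from the good edges, $4d|V_i|/|X_i|\to 0$ from edges touching $V_i$, and $B_S^{X'}(x)\subseteq B_S^{X}(x)$ for the supports) is fine.

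The ``$(\Rightarrow)$'' direction, however, has a genuine gap. The paper proves it by a direct Markov-type argument: from a witness $\xi^k$ with average defect $<1/k^2$ it extracts the set $Z_i^k$ of vertices with pointwise defect $>1/k$, bounds its density by $1/k$, diagonalizes over $k$, and then applies Lemma \ref{Removing} to restrict the witness to the complement. You instead route through the chain (2)$\Rightarrow$(1)$\Rightarrow$(5)$\Rightarrow$(4)$\Rightarrow$(3), and two links of that chain are not available. First, ``property A on average $\Rightarrow$ measurable amenability of $(\partial G(X),\hat\mu_\omega)$'' is proved nowhere in the paper (Lemma \ref{AmenaONavg} is only the converse), and your sketch of it glosses over the fact that the Reiter condition requires controlling the defect over groupoid elements of arbitrary bounded distance, not just over edges; a telescoping estimate is needed and is not supplied. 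Second, and more seriously, ``hyperfinite along $\omega$ $\Rightarrow$ property almost-A along $\omega$'' is essentially as hard as the proposition you are proving: after your diagonalization the single removed set $V_i=Z_i^{(N(i))}$ leaves components of size $K_{N(i)}$, which is unbounded in $i$, so there is no uniform support radius $S$ for a fixed $\varepsilon$. Lemma \ref{Removing} cannot repair this, because it presupposes that the ambient graph already has a property-A witness with uniform parameters $(\varepsilon,S)$ and only converts it into one for a subgraph; it does not manufacture uniformity across $i$ out of componentwise (per-graph) witnesses. You correctly flag this as the technical heart of your argument, but you do not resolve it, and resolving it would amount to redoing the paper's direct Markov/diagonalization argument anyway. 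I would recommend proving (2)$\Rightarrow$(3) directly along those lines rather than through the groupoid.
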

	\begin{proof}
		Given a $k\in \N$. Suppose that the sequence of finite graphs $\mathcal{X}=\{X_i\}_{i\in\N}$ has property A on average and take $\varepsilon = \frac{1}{k^2}$, such that there are $S(k)$ and a normalized map $\xi^k:X \to \ell^1(X)$ with $\xi^k_x$ supported in a ball of radius $S(k)$ around $x\in X$ that satisfies  
		
		$$ \lim\limits_{i \to \omega} \dfrac{1}{|X_i|} \sum_{x \in X_i} \sum\limits_{ \substack{y \in X_i \\ (x,y)\in E(X_i); }} ||\xi^k_x - \xi^k_y ||_1 <\frac{1}{k^2}. $$
		Take $C_k=\frac{1}{k}$ and define $Z_{i}^k= \bigg\{ x \in X_i ; \sum\limits_{ \substack{y \in X_i \\ (x,y)\in E(X_i) }} ||\xi^k_x - \xi^k_y ||_1> C_k=\frac{1}{k}\bigg\} \subseteq X_i$.
		
		Notice that, for some $i\in \N$,
		\begin{align*}
			\dfrac{C_k |Z_{i}^k|}{|X_i|} &\leq \dfrac{1}{|X_i|} \sum_{x \in Z_{i}^k} \sum\limits_{ \substack{y \in X_i \\ (x,y)\in E(X_i) }} ||\xi^k_x - \xi^k_y ||_1 \\
			&\leq \dfrac{1}{|X_i|} \sum_{x \in X_i} \sum\limits_{ \substack{y \in X_i \\ (x,y)\in E(X_i) }} ||\xi^k_x - \xi^k_y ||_1\\
			& < \frac{1}{k^2}.
		\end{align*}
		That means, there is an $I_k\in \N$ such that for all $i\geq I_k$, we have $\dfrac{|Z_{i}^k|}{|X_i|}  < \dfrac{\frac{1}{k^2} }{\frac{1}{k}} = \frac{1}{k}$.

		Now, for any $\varepsilon>0$, we consider $N= \big\{k \in \N ; \varepsilon\geq \frac{1}{k^2} \big\} \subseteq \N$ and apply the previous construction for all $k \in N$. For every $i\in \N$, define $k_i=\max\{k \in N; I_k < i \}$. We consider $Z_i \colon = Z_i^{k_i}$. Notice that $\frac{|Z_{i}|}{|X_i|}$ is bounded by $\frac{1}{{k_i}}$ and clearly tends to $0$, when $i$ goes along $\omega \in \partial \beta \N$. Denote the restriction of $\xi^{k_i}$ to the complement of $Z_i$ by $\eta^{k_i}: X_i\setminus Z_i\to \ell^1(X_i)$. Let's prove that $\eta^{k_i}$ satisfies the conditions of property A for all $i\in \N$. It is easy to see that the conditions $(i)$ and $(ii)$ of the definition of property A on average along $\omega \in \partial \beta \N$ implies conditions $(i)$ and $(ii)$ of property A, with $S=\max S(k_i)$. We only need to verify the third condition.
		
		For $(x,y)\in E(X_i\setminus Z_i)$ and all $i\in \N$, we have
		$$||\eta^{k_i}_x - \eta^{k_i}_y ||_1 \leq  \sum\limits_{ \substack{y \in X_i\setminus Z_i \\ (x,y)\in E(X_i) }} ||\xi^{k_i}_x - \xi^{k_i}_y ||_1 \leq C_{k_i} \overset{i \to \infty}{\longrightarrow} 0$$

		Now we can apply Lemma \ref{Removing}, then $\eta^{k_i}: X_i\setminus Z_i\to \ell^1(X_i\setminus Z_i)$ satisfies the conditions of property A for all $i\in \N$. Hence, $\{X_i\}_{i\in\N} $ has property almost-A along $\omega\in\partial\beta\N$.

		The converse follows since the property almost-A of the sequence of graphs $ \{X_i\}_{i\in\N}$ implies that the coarse boundary groupoid $(\partial G(X),\hat{\mu}_\omega)$ is measurably amenable for every non-principal ultrafilter $\omega \in \partial \beta \N$. Thus, by Lemma \ref{AmenaONavg}, the sequence of graphs $\{X_i\}_{i\in\N}$ has property A on average. 	\end{proof}
	
	We are now able to deduce all implications in the following theorem.	
	
	\begin{thm}\label{mainthm}
		Let $\mathcal{X}=\{X_i\}_{i\in\N}$ be a sequence of finite graphs with bounded degree such that the cardinality of $X_i$ goes to infinity when $i$ goes to infinity. Take $X$ to be the space of graphs of this sequence and ${\partial G(X)}$ the related coarse boundary groupoid. The following statements are equivalent:
		\begin{enumerate}[$(1)$]
			\item The measurable coarse boundary groupoid $({\partial G(X)},\hat{\mu}_\omega)$ is measurably amenable, for every non-principal ultrafilter $\omega\in\partial\beta\N$;
			\item  The sequence of graphs $\mathcal{X}$ has property A on average along every non-principal ultrafilter $\omega\in\partial\beta\N$;
			\item The sequence of graphs $\mathcal{X}$ has property almost-A along every non-principal ultrafilter $\omega \in \partial \beta \N$;
			\item The sequence of graphs $\mathcal{X}$ is hyperfinite;
			\item The measurable equivalence relation $(R_{\partial G(X)},\hat{\mu}_\omega)$ induced by the coarse boundary groupoid is $\mu_\omega$-hyperfinite, for every non-principal ultrafilter $\omega\in\partial\beta\N$.
		\end{enumerate}
	\end{thm}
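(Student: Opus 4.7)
The plan is to close the cycle of five implications by invoking results already established in the previous subsections, assembling them into two loops that meet at statement~(1). Concretely, I would argue $(1)\Rightarrow(2)\Leftrightarrow(3)\Rightarrow(1)$ for the ``measurable amenability / property A on average / property almost-A'' block, and $(1)\Leftrightarrow(5)\Leftrightarrow(4)$ for the ``amenability / hyperfiniteness'' block.

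For the first block, the implication $(1)\Rightarrow(2)$ is exactly the content of Lemma~\ref{AmenaONavg}: ultraproducts of the Reiter functions witnessing measurable amenability give, after the approximation lemmas, normalized $\ell^1$-valued vectors with small average variation along edges. The equivalence $(2)\Leftrightarrow(3)$ is Proposition~\ref{OnAvgAlmostA}, where Kaiser's removing-points method (Lemma~\ref{Removing}) allows one to pass from an on-average estimate to actual property~A after deleting a small set of bad vertices. Finally, $(3)\Rightarrow(1)$ is the observation following Definition~\ref{propertyalmostAdef}: if $V_i\subseteq X_i$ has vanishing density along $\omega$ and $\{X_i\setminus V_i\}$ has property~A, then the coarse boundary groupoids $\partial G(X)$ and $\partial G(X')$ coincide as measured groupoids (the deleted set contributes nothing in the measure $\mu_\omega$), and by Theorem~\ref{SkandalisTuYu} the latter is topologically amenable, hence measurably amenable.

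For the second block, $(4)\Leftrightarrow(5)$ is Theorem~\ref{hyperhyper}, which translates bounded-size partitions of the graphs (after removing a small incident set) into finite subequivalence relations of the coarse boundary equivalence relation via the isometry of source fibers with ultraproducts from Proposition~\ref{isometric}. The equivalence $(1)\Leftrightarrow(5)$ is the Connes--Feldman--Weiss theorem for the coarse boundary groupoid, Theorem~\ref{CFW}, where one direction is the standard ``averages over finite classes'' construction and the other direction goes through a p.m.p.\ factor on a standard Borel space (as established in the work of Carderi cited in the proof).

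The main subtlety in assembling these statements is checking that the quantifiers on ultrafilters are handled correctly throughout; each of (1), (2), (3), (5) is formulated ``for every non-principal ultrafilter $\omega$'', while (4) is a statement purely about the graph sequence. For this one uses Lemma~\ref{hyperultra} and Lemma~\ref{almostaultra}, together with the analogous statement for property~A on average, to convert global properties of $\mathcal{X}$ into properties holding along every ultrafilter and back. I expect no real obstacle beyond bookkeeping, since all nontrivial content has been absorbed into the lemmas and theorems listed above; the proof itself is essentially a diagram chase
\[
(1)\;\Longleftrightarrow\;(5)\;\Longleftrightarrow\;(4),\qquad (1)\;\Longrightarrow\;(2)\;\Longleftrightarrow\;(3)\;\Longrightarrow\;(1).
\]
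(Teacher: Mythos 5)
Your proposal is correct and matches the paper's own proof in all essentials: the same ingredients (Lemma \ref{AmenaONavg}, Proposition \ref{OnAvgAlmostA}, the observation following Definition \ref{propertyalmostAdef}, Theorem \ref{hyperhyper}, and Theorem \ref{CFW}) are invoked for the same implications, with the ultrafilter bookkeeping handled via Lemmas \ref{hyperultra} and \ref{almostaultra}. The only difference is cosmetic: the paper closes a single cycle $(1)\Rightarrow(2)\Leftrightarrow(3)\Rightarrow(4)\Leftrightarrow(5)\Leftrightarrow(1)$, reading $(3)\Rightarrow(4)$ off the remark on property almost-A, whereas you route $(3)\Rightarrow(1)$ through that same remark and reach $(4)$ via $(1)\Leftrightarrow(5)\Leftrightarrow(4)$; the two assemblies are logically interchangeable.
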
 
	
	\begin{proof} 
		The implication $(1)$ to $(2)$ follows from Lemma \ref{AmenaONavg}. The conditions $(2)$ and $(3)$ are equivalent by the Proposition \ref{OnAvgAlmostA}. As we commented before the last Proposition, $(3)$ implies $(4)$.  Finally, Theorem \ref{hyperhyper} implies that $(4)$ and $(5)$ are equivalent and the modified version of Connes-Feldman-Weiss's Theorem (Theorem \ref{CFW}) covers the equivalence between $(5)$ and $(1)$.	\end{proof}

	\subsection{A-T-menability}
	
	We know that a space of bounded degree graphs $X$ is coarsely embeddable into a Hilbert space if and only if the coarse groupoid $G(X)$ is a-T-menable. In this case the reduction $\partial G(X)$ is also a-T-menable. We saw in Proposition \ref{Rufus}, that the space of graphs $X$ be asymptotically coarsely embeddable into a Hilbert space is a sufficient condition for the coarse boundary groupoid $\partial G(X)$ be a-T-menable. In this section we prove the converse of this result.
	
	This section consists of two parts. In the first one assume that the coarse boundary groupoid $\partial G(X)$ is topologically a-T-menable; using the Tietze's Theorem, we lift the conditionally negative definite function $\psi$ to the sequence of graphs and by some spectral analysis we deduce that after a small perturbation, this lift can be forced to satisfy the conditions of asymptotic coarse embeddability into a Hilbert space. In the second part, we suppose that the coarse boundary groupoid $\partial G(X)$ is measurably a-T-menable; using the same ideas of the topological case, we prove that the sequence of graphs then satisfies a weak form of an asymptotic coarse embeddability into a Hilbert space: that is, after discarding spaces of small measure from the sequence of graphs, the remaining space of graphs equipped with the old metric becomes asymptotically coarsely embeddable into a Hilbert space. 

	\subsubsection{Asymptotic coarse embeddability}

	In order to make the ideas more comprehensible, we prove some preparatory lemmas beforehand. For the first lemma, using the properties of the coarse boundary groupoid $\partial G(X)$, we investigate the properties of an approximation of the conditionally negative definite function $\psi$ defined on the coarse boundary groupoid.

	\begin{lemma} \label{Lemma1} Let $X$ be the space of graphs of a sequence of finite bounded degree graphs $\{ X_i\}_{i\in\N}$. If $\psi: \partial G(X) \to \R$ is a continuous function, then there exist a sequence of symmetric limit-normalized kernels $(K_i: X_i \times X_i \to \R )_{i\in \N}$ that represent  $\psi$ on the boundary and non-decreasing control functions $\rho_1,\rho_2:\R_+ \to \R$ for the kernels $K_i$, i.e, for all $x_i,y_i \in X_i$, $$\rho_1(d(x_i,y_i))\leq K_i(x_i,y_i)\leq \rho_2(d(x_i,y_i)).$$ Moreover, if $\psi$ is a proper function then the control functions go to infinity at infinity, i.e, $\lim\limits_{r\to \infty} \rho_i(r)=\infty$, for $i=1,2$.
		
	\end{lemma}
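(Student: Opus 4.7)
The natural approach is to apply Tietze's extension theorem: extend $\psi$ from the closed subspace $\partial G(X) \subseteq G(X)$ to a continuous function $\tilde\psi\colon G(X) \to \R$, and then read off $K_i$ as the restriction of $\tilde\psi$ to $X_i \times X_i$. Tietze applies because $G(X)$ is locally compact Hausdorff (by Skandalis--Tu--Yu) and $\sigma$-compact, being the union $\bigcup_{R>0} \overline{E_R}$ of compact entourages; hence it is normal, and $\partial G(X) = G(X)|_{\partial \beta X}$ is closed since $\partial \beta X$ is closed in $\beta X$. Symmetrizing via $\tilde\psi(g) \leftarrow \tfrac{1}{2}(\tilde\psi(g) + \tilde\psi(g^{-1}))$ using the continuous inverse map produces symmetric kernels.

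Next I would verify the representing property. Each boundary point $\eta \in \overline{E_k} \cap \partial G(X)$ is of the form $\eta = [(x_i,y_i)]_\omega$ with $(x_i,y_i) \in E_k^i$, and the topology of $\beta X \times \beta X$ is governed precisely by such ultralimits, so continuity of $\tilde\psi$ yields $\psi(\eta) = \lim_{i \to \omega} K_i(x_i,y_i)$; this is exactly the approximation condition of Lemma \ref{appfunc}. For limit-normalization, $\psi$ is the CND function witnessing a-T-menability and hence vanishes on the diagonal of $\partial \beta X$, so the same continuity argument gives $K_i(x_i, x_i) = \tilde\psi(x_i, x_i) \to 0$ for every sequence whose $\omega$-limit lies on the diagonal of $\partial\beta X$.

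For the control functions, define
$$\rho_2(R) := \max_{g \in \overline{E_R}} \tilde\psi(g), \qquad \rho_1(R) := \inf_{\substack{g \in G(X) \\ d(g) \geq R}} \tilde\psi(g),$$
both manifestly non-decreasing in $R$. The upper function $\rho_2$ is finite by compactness of $\overline{E_R}$ and clearly satisfies $K_i(x,y) \leq \rho_2(d(x,y))$, while $\rho_1(d(x,y)) \leq K_i(x,y)$ by construction. In the proper case, for every $C>0$ the set $\{g \in \partial G(X) : \psi(g) \leq C\}$ is compact and thus contained in $\overline{E_{R(C)}} \cap \partial G(X)$ for some $R(C)$; continuity of $\tilde\psi$ together with the compactness of each $\overline{E_R}$ lets one transfer this growth from the boundary to a neighborhood inside $G(X)$, giving $\rho_1(R) \to \infty$.

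The main obstacle I expect is ensuring that $\rho_1$ is finite-valued, and in the proper case that it tends to infinity. This forces the Tietze extension to be chosen with some care rather than abstractly: for instance by extending $\psi$ piecewise on each compact piece $\overline{E_R} \cap \partial G(X)$ with the norm-preserving version of Tietze, and patching via a partition of unity subordinate to the filtration $\{\overline{E_R}\}$, one can force $\tilde\psi$ to inherit the uniform bounds of $\psi$ across the filtration. Alternatively, in the proper case one can truncate $\tilde\psi$ from below against a proper real-valued function on the $\sigma$-compact space $G(X)$ without altering the boundary values, thereby ensuring the desired growth of $\rho_1$.
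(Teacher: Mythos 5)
Your proposal follows essentially the same route as the paper: Tietze extension of $\psi$ to $G(X)$, control functions given by the tail-infimum and shell-supremum of the extension over distance shells, and a truncation from below compatible with the boundary values to force $\rho_1$ to be finite and, in the proper case, to tend to infinity. The paper implements your second suggested fix concretely, replacing $K_i'(x,y)$ by $\max\bigl(K_i'(x,y),\tfrac12\inf_{d(u,v)=d(x,y)}\psi(u,v)\bigr)$ and checking that on each fixed entourage this alters only finitely many of the $K_i$, so the kernels still represent $\psi$ on the boundary.
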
 
	
	\begin{proof}
		Take  a continuous function $\psi\in C( \partial G(X))$. Since $\partial G (X)$ is a closed subset of $G (X)$, by Tietze's Theorem, we can extend $ \psi$ continuously to $\psi': G(X) \to \R $ such that $\psi'|_{\partial\beta X}= \psi$. As $G(X) \subseteq X\times X \cup\partial\beta X \times \partial \beta X$, we can see $\psi': G(X) \to \R $ as a sequence of functions $(K_i': X_i \times X_i \to \R)_{i\in\N}$  that approximates $\psi'$. In particular, $(K_i')_{i\in\N}$ is an approximation of $\psi$ on the boundary. Even more, by continuity and limit properties, the sequence $(K_i')_{i\in\N}$ can be taken as symmetric limit-normalized functions.

		Our goal is to construct the  control function $\rho_1$. It will be necessary to do some rearrangements on the sequence of kernels to make it clear that they are controllable by the functions $\rho_1$ and $\rho_2$ along the limit. 
		
		For some $m\in \N$, we define
		$$\rho_1^{K',m}(\ell)\coloneqq \inf\limits_{\substack{d(x_i,y_i)=\ell\\ (x_i,y_i)\in X_i\\i\geq m}} K_i'(x_i,y_i).$$ When $m$ tends to infinity the sequence $\rho_1^{K',m}(\ell)$ converges to $ \rho_1^\psi(\ell) \coloneqq \inf\limits_{d(x,y)=\ell} \psi(x,y)$. By this fact, for every $\ell$, there is a $n \in \N$ such that $\rho_1^{K',m}(\ell) \geq \frac{\rho_1^\psi (\ell)}{2}$, for all $m\geq n$. For the sequence of kernels $K'=(K_i')_{i\in\N}$, we take $$K_i''(x_i,y_i)=\bigg\{\begin{array}{cc} 
			\frac{\rho_1^\psi (d(x_i,y_i))}{2} , & \text{if } K_i'(x_i,y_i)\leq  \frac{\rho_1^\psi (d(x_i,y_i))}{2} \\
			K_i'(x_i,y_i), & \text{otherwise.}
		\end{array}$$

		Notice that the limit of $(K_i'')_{i\in \N}$ also represents $\psi$ on the boundary, because for every $R>0$, there is a $m$ sufficiently large such that $K_i''\big|_{E_R^i} =K_i'\big|_{E_R^i}$ for all $i\geq m$. In particular, $(K_i'')_{i\in \N}$ represent $\psi$ in $\overline{E_R}$ and thus in $\partial G(X)= \bigcup\overline{E_R} \cap \partial\beta X \times \partial\beta X $. Define $$\rho_1^{K''}(\ell) \coloneqq \inf\limits_{\substack{ d(x_i,y_i)=\ell\\ (x_i,y_i)\in X_i }} K_i''(x_i,y_i).$$ Clearly,   $\rho_1^{K''}(d(x_i,y_i)) \leq K_i''(x_i,y_i)$. Moreover, if $\psi$ is proper, then $\rho_1^{K''}(\ell)$ goes to infinity when $\ell$ increases since for an infinite sequence $(\gamma_i)_{i\in\N}$ in $\partial G(X)$ that escapes to infinity, the sequence $(\psi(\gamma_i))_{i\in\N}$ also escapes to infinity.

		To express $\rho_2$ we apply an analogous argument taking the supremum of the kernels.  It is important to highlight that the infimum and supremum values of $\psi$ are always attained in the compact set $\overline{E_\ell}$. For $\rho_2^\psi(\ell) \coloneqq \sup\limits_{d(x,y)=\ell} \psi(x,y)$, define
		
		$$K_i(x_i,y_i)=\bigg\{\begin{array}{cc} 
			\frac{\rho_2^\psi (d(x_i,y_i))}{2} , & \text{if } K_i''(x_i,y_i)\leq  \frac{\rho_2^\psi (d(x_i,y_i))}{2} \\
			K_i''(x_i,y_i), & \text{otherwise.}
		\end{array}$$

		By the same argument, we see that $(K_i)_{i\in\N}$ approximates $\psi$. Moreover, it is also limit-normalized and symmetric. Thus, for $$\rho_2^{K}(\ell) \coloneqq \sup\limits_{\substack{ d(x_i,y_i)=\ell\\ (x_i,y_i)\in X_i }} K_i(x_i,y_i) \hspace{0.4cm} \text{   and   } \hspace{0.4cm}  \rho_1^{K}(\ell) \coloneqq \inf\limits_{\substack{ d(x_i,y_i)=\ell\\ (x_i,y_i)\in X_i }} K_i(x_i,y_i)$$ we have $$\rho_1^K(d(x_i,y_i)) \leq K_i(x_i,y_i)\leq \rho_2^K(d(x_i,y_i)).$$ 
		
		Notice that $\rho_2$ goes to infinity at infinity, by the fact that $\rho_1$ goes to infinity. Not necessarily $\rho_1^K$ and $\rho_2^K$ are non-decreasing. It depends on $\psi$, but since both functions are growing we can take $\rho_1(r)=\inf\limits_{l\geq r} \rho_1^K(l)$ and analogously for $\rho_2(r)$.\end{proof}
		
	The problem that appears now is the fact that the lifted sequence might stop to be conditionally negative definite; however, as it approximates a conditionally negative definite kernel, it is possible to control this violation precisely, and for that we will estimate the spectrum of the lifted kernels.
	
	For a fixed $i\in \N$ and $R\in \R_+$, we consider for every $x \in  X_i$ the orthogonal projection $$P_x^{R}:\ell^2(X_i)\to \ell^2(B_{R}(x))_0\coloneqq \bigg\{f :B_{R}(x)\to \R; \sum\limits_{y\in B_{R}(x)} f(y)=0\bigg\}.$$ We denote the operator induced by $K_i$ as $T_i: \ell^2(X_i)\to \ell^2(X_i)$ and $\{\delta_{x_j}\}_{j=1}^{|X_i|}$ the basis of $\ell^2(X_i)$. Notice that for all $x\in X_i$ and $R\leq R_i$ we have
	\begin{equation*}
		\begin{split}
			\sigma(P_x^{R} T_i P_x^{R}) \subseteq (-\infty, 0]
			& \iff   \langle P_x^{R} T_i P_x^{R}f,f \rangle \leq 0, \hspace{0.3cm}\forall f \in \ell^2(X_i)\\ 
			&\iff \langle T_i f, f \rangle \leq 0, \hspace{1.4cm}\forall f \in \ell^2(B_{R}(x))_0
		\end{split}
	\end{equation*}
	
	That is, for all subsets $\{x_1, \dots, x_n\}\subseteq B_R(x)$, setting $\lambda_j=f(x_j)$ we have $$\sum_{j=1}^n \lambda_j = 0 \implies \sum_{j,k=1}^n\lambda_j\lambda_k K_i(x_j,x_k)\leq 0.$$

	Therefore, $(K_i)$ is an $(R_i)$-locally conditionally negative definite kernels (see Definition \ref{asympEmb}) if and only if for all $x\in X_i$ and $R\leq R_i$, the spectrum $\sigma(P_x^{R} T_i P_x^{R})$ is non-positive. The next lemma then shows that all approximations $(K_i)_{i\in\N}$ of a conditionally negative definite function $\psi$ are \emph{$b_i$-almost $R_i$-locally conditionally negative definite}, where $b_i$ is a real positive sequence going to $0$: that is, for a fixed $R$, the spectrum $\sigma(P_x^{R} T_i P_x^{R})$ is inside the interval $(-\infty,b_i]$, for all $x\in X_i$. This is expected since we are approximating a conditionally negative definite function.

	\begin{lemma} \label{Lemma2}
		Let $\psi : \partial G(X)\to \R$ be a continuous conditionally negative definite function with the respective sequence of kernels $(K_i)_{i \in \N}$ as in Lemma \ref{Lemma1}. Then for all $R \in \R_+$, there is a sequence of non-negative real numbers $(b_i)_{i\in\N}$ going to zero when $i$ goes to infinity and for every $i\in \N$,
		$$ \bigcup\limits_{x \in X_i} \sigma \bigg(P_x^R T_iP_x^R\bigg) \subseteq (-\infty, b_i].$$

	\end{lemma}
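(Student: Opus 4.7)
The plan is to argue by contradiction. Suppose the conclusion fails: then there exist $\varepsilon > 0$, an infinite subset $I \subseteq \N$, and for each $i \in I$ a point $x_i \in X_i$ together with a unit vector $f_i \in \ell^2(B_R(x_i))_0$ satisfying $\langle T_i f_i, f_i\rangle > \varepsilon$. I fix a non-principal ultrafilter $\omega \in \partial\beta\N$ containing $I$. Bounded geometry of $\mathcal{X}$ forces $|B_R(x_i)| \leq N_R$ for some uniform constant $N_R$; thinning $\omega$ further, I may assume the cardinality is constant, say $n$, and fix enumerations $B_R(x_i) = \{y_1^i, \dots, y_n^i\}$ coherent along $\omega$.

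Next, I set $x = [x_i]_\omega \in \partial\beta X$ and $y_j = [y_j^i]_\omega$. Uniform discreteness of the graphs forces the $y_j$ to be pairwise distinct points of $\partial\beta X$, and by Proposition \ref{isometric} the pairs $(x, y_j)$ lie in the ball of radius $R$ about $x$ inside the source fiber $(\partial G(X))_x$; in particular each pair $(y_j, y_k)$ belongs to $\overline{E_{2R}} \subseteq \partial G(X)$. The scalars $a_j = \lim_{i \to \omega} f_i(y_j^i)$ are well-defined elements of $[-1,1]$, and the normalization and mean-zero conditions on $f_i$ pass to the ultralimit to give $\sum_{j=1}^n a_j^2 = 1$ and $\sum_{j=1}^n a_j = 0$.

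Since the kernels $K_i$ arise from the continuous Tietze extension of $\psi$ to $G(X)$ used in the proof of Lemma \ref{Lemma1}, for each pair $(j,k)$ one has $K_i(y_j^i, y_k^i) \to \psi(y_j, y_k)$ as $i \to \omega$. Taking the ultralimit of the finite sum
\[ \langle T_i f_i, f_i\rangle = \sum_{j,k=1}^n K_i(y_j^i, y_k^i)\, f_i(y_j^i)\, f_i(y_k^i) \]
yields
\[ \varepsilon \leq \lim_{i \to \omega} \langle T_i f_i, f_i\rangle = \sum_{j,k=1}^n a_j a_k\, \psi(y_j, y_k). \]
However, applying conditional negative definiteness of $\psi$ at the range fiber of $y_1$, with arrows $g_j = (y_1, y_j) \in G^{y_1}$ (for which $g_j^{-1}g_k = (y_j, y_k)$) and coefficients $a_j$ summing to zero, forces this sum to be non-positive, contradicting $\varepsilon > 0$. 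Taking $b_i = \max\bigl\{0,\ \sup_{x \in X_i} \max \sigma(P_x^R T_i P_x^R)\bigr\}$ then delivers the required non-negative sequence with $b_i \to 0$.

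The main obstacle is ensuring that the approximation $K_i(y_j^i, y_k^i) \to \psi(y_j, y_k)$ is \emph{genuinely pointwise} along arbitrary ultralimits rather than merely almost everywhere. This is precisely why one constructs $K_i$ as the restriction of a continuous Tietze extension of $\psi$ to $G(X)$, as in Lemma \ref{Lemma1}, rather than as a generic $L^\infty$-representative: a spectral/topological statement at each individual $x \in X_i$ requires topological rather than measure-theoretic compatibility between the approximating kernels and the limit function.
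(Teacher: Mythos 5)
Your proof is correct and follows essentially the same route as the paper's: both argue by contradiction, take an ultralimit of the witnessing points and coefficients to land in a fiber of $\partial G(X)$, and contradict the conditional negative definiteness of $\psi$ using the fact that the kernels from Lemma \ref{Lemma1} converge pointwise (via the continuous Tietze extension) on the compact entourage $\overline{E_{2R}}$. Your normalization of the witnesses as unit vectors in $\ell^2(B_R(x_i))_0$ is in fact slightly cleaner than the paper's handling of the coefficients $\lambda_k^j$, but the argument is the same.
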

	
	\begin{proof}
		
		Fix some $R\in \R_+$, define $b_i({x_j})= \sigma (P_{x_j}^R T_iP_{x_j}^R)$. For each $i\in \N$, take the maximum values of zero or $b_i({x_j})$ over all ${x_j} \in X_i$ for $1\leq j\leq |X_i|$, i.e, $$b_i\coloneqq \max \{0, \max\limits_{{x_j} \in X_i} b_i({x_j}) \} .$$
		
		Notice that the sequence $(b_i)_{i\in\N}$ tends to zero when $i$ goes to infinty, since the sequence of kernels $ (K_i)_{i\in\N}$ approximates a conditionally negative definite function $\psi$. Indeed, suppose that the sequence $(b_i)_{i\in\N}$ does not converge to zero. Then, there exist $\varepsilon_0>0$ such that $b_i\geq \varepsilon_0$ for all $i$ bigger than some $N$. Notice that, for all ${x_j}\in X_i$, by the definition of $b_i({x_j})$, given any $x_1^j,\cdots, x_n^j \in B_R(x_j)$ and $\lambda_1^j,\cdots, \lambda_n^j\in\R$ such that $\sum\limits_{k=1}^n \lambda_k^j=0$ we have that $\sum\limits_{k,l=1}^n\lambda_k^j\lambda_l^j K_i(x_k^j,x_l^j)\geq \varepsilon_0$. Take some non-principal ultrafilter $\omega\in\partial \beta \N$ and some point $x\in\partial \beta X$ such that $x=\lim\limits_{j \rightarrow \omega}x_j$. We can do the above construction for all graphs, where we can choose sequences of points $x_1^j,\cdots, x_n^j \in B_R(x_j)$, for every $j\in\N$. Notice that the limit of the points $x_1^j,\cdots, x_n^j$ along the ultrafilter lays in the ball of radius $R$ centred in $x$. More over, $[x_1^j]_\omega,\cdots, [x_n^j]_\omega$ is in the fiber $\partial G(X)^x$.  For the sequence of real numbers  $\lambda_1^j,\cdots, \lambda_n^j\in\R$, we know that $\sum\limits_{k=1}^n \lambda_k^j=0$, thus, every sequence $\{\lambda^j_k\}_{j\in\N}$ is bounded, for every $1\leq k\leq n$ and therefore, for some subsequence, it converges to a real number $\lambda_k=\lim\limits_{j\to\omega'}\lambda^j_k$ such that $\lim\limits_{j \to \omega'}\sum\limits_{k=1}^n \lambda_k^j =\sum\limits_{k=1}^n \lambda_k=0$. Then, 
		$$\lim\limits_{j\to \omega'}\sum\limits_{k,l=1}^n\lambda_k^j\lambda_l^j K_i(x_k^j,x_l^j)=\sum\limits_{k,l=1}^n\lambda_k\lambda_l \psi([x_k]_{\omega'},[x_l]_{\omega'})\geq \varepsilon_0$$
		
		This contradict the fact that the funtion $\psi$ is conditionally negative definite. Thus, we can say that the sequence $(b_i)_{i\in\N}$ tends to zero.	
		
		We can conclude by construction that $\sigma (P_x^R T_iP_x^R) \subseteq (-\infty, b_i]$, for all $x \in X_i$. Thus, the union over all $x\in X_i$ of those spectrums lay in the interval $ (-\infty, b_i]$.\end{proof}

	If the approximation $(K_i)_{i\in\N}$ satisfies the lemma above for a sequence $b_i = 0$, then the sequence of kernels $(K_i)_{i\in\N}$ is indeed $R$-locally conditionally negative definite. The next lemma shows us that given any approximation of $\psi$, we can ``shift the approximation by $b_i$'', still remaining close to the previous one and ensuring that the new approximation is $R$-locally conditionally negative definite. 
	

	\begin{lemma} \label{Lemma3}
		
		Keep the assumptions of Lemma \ref{Lemma2}. Given $R\in \R_+$, there is a sequence of kernels $(K_i')_{i\in \N}$ that is $R$-locally conditionally negative definite and  exist a real sequence $(C_{i,R})_{i \in \N}$ converging to zero such that $K_i$ is $C_{i,R}$ close to $K_i'$, i.e, $\max\limits_{x,y\in X_i} |K_i'(x,y) -K_i(x,y) | < C_{i,R}$. In particular, the sequence of kernels $(K_i')_{i \in \N}$ approximates the map $ \psi$.
	\end{lemma}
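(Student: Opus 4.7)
The strategy is a clean spectral shift: since Lemma \ref{Lemma2} tells us the spectrum of $P_x^R T_i P_x^R$ is contained in $(-\infty, b_i]$ with $b_i \to 0$, we will push the spectrum below zero by subtracting $b_i$ times the identity from the associated operator. In kernel language this amounts to modifying only the diagonal entries, so the uniform perturbation is immediately controlled by $b_i$.

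\textbf{Step 1.} Apply Lemma \ref{Lemma2} with the given $R$ to produce a sequence $(b_i)_{i \in \N}$ of non-negative reals tending to zero such that $\sigma(P_x^R T_i P_x^R) \subseteq (-\infty, b_i]$ for every $x \in X_i$. Define
\[
K_i'(x,y) := K_i(x,y) - b_i \cdot \mathbf{1}_{x = y},
\]
so that $K_i'$ is symmetric and the operator it induces is $T_i' = T_i - b_i I$.

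\textbf{Step 2 (R-local CND).} Take any $\{x_1,\dots,x_n\} \subseteq B_R(x)$ and $\lambda_1,\dots,\lambda_n \in \R$ with $\sum_k \lambda_k = 0$. Set $f := \sum_k \lambda_k \delta_{x_k} \in \ell^2(B_R(x))_0$. Then
\[
\sum_{k,l=1}^n \lambda_k \lambda_l K_i'(x_k,x_l) \;=\; \langle T_i f, f \rangle - b_i \|f\|_2^2 \;=\; \langle P_x^R T_i P_x^R f, f\rangle - b_i \|f\|_2^2 \;\leq\; 0,
\]
the last inequality following from the spectral bound on $P_x^R T_i P_x^R$. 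Hence $(K_i')_{i\in\N}$ is $R$-locally conditionally negative definite.

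\textbf{Step 3 (approximation and closeness).} By construction $|K_i'(x,y) - K_i(x,y)| \leq b_i$ for all $x,y \in X_i$, so setting $C_{i,R} := b_i$ gives the desired uniform estimate and $C_{i,R} \to 0$. Since $(K_i - K_i')_{i \in \N}$ vanishes uniformly, the same argument used for $(K_i)_{i\in\N}$ shows that on every entourage $\overline{E_k}$ one has $\psi\big|_{\overline{E_k}} = [K_i'|_{E_k^i}]_\omega$ almost everywhere, i.e.\ $(K_i')_{i \in \N}$ still represents $\psi$. Symmetry is immediate, and limit-normalization persists because $K_i'(x_i,x_i) = K_i(x_i,x_i) - b_i$ with both terms tending to zero along any non-principal ultrafilter.

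\textbf{Expected obstacle.} There is no genuine difficulty here, as the identity perturbation is tailor-made to shift the spectrum; the only mild subtlety is recognizing that a constant shift on the \emph{diagonal} of the kernel corresponds to subtracting $b_i I$ from the operator (so it does push the spectrum down by $b_i$), whereas a constant shift on \emph{all} entries would not help since it annihilates against mean-zero test functions. Keeping the modification diagonal also ensures the uniform bound $C_{i,R} = b_i$ is sharp, which is what makes the approximation of $\psi$ survive.
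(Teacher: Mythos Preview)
Your proof is correct and notably simpler than the paper's. The paper constructs
\[
T_i' \;=\; T_i - b_i \sum_{j=1}^{|X_i|} \chi_{(0,\infty)}\bigl(P_{x_j}^R T_i P_{x_j}^R\bigr)
\]
via functional calculus on the positive spectral projections for each ball in a cover of $X_i$, and then uses a covering-multiplicity argument to obtain the looser bound $C_{i,R} = b_i\, d^{R}$ (with $d$ the degree bound). Your diagonal shift $T_i' = T_i - b_i I$ is more direct and gives the tighter $C_{i,R} = b_i$; it sidesteps the covering argument entirely because subtracting $b_i I$ uniformly depresses the spectrum of \emph{every} compression $P_x^R T_i' P_x^R = P_x^R T_i P_x^R - b_i P_x^R$ at once. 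The paper's construction has the aesthetic virtue of only correcting where the spectrum is actually positive, but nothing downstream (the choice of $R_i$ in Theorem~\ref{thmtopaTmenable}, the control-function shift by $C = \sup_i C_{i,R_i}$) exploits that localization, so your elementary route is a genuine simplification with no loss.
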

	
	\begin{proof}	
		Fix $X_i$ and $R>0$, we can exhibit a cover of each $X_i$ with a finite number of balls $\{B_{R}(x_j)\}_{j=1}^{|X_i|}$, i.e, it is the cover by balls of radius $R$, centred in every point of $X_i$ points. This cover has the following properties: First, every point $x$ of $X_i$ belongs to at most $d^R$ components of the cover, since each ball has cardinality $d^R$. Second, for every subset of $X_i$ with diameter less than $2R$, it is contained in at least one component of the cover. 
		
		By Lemma \ref{Lemma2}, the given sequence of kernels $(K_i:X_i\times X_i \to \R)_{i\in \N}$ satisfies $\sigma (P_x^R T_i P_x^R) \subseteq (-\infty,b_i]$, for every $i\in \N$ and $x \in X_i$, where $T_i$ is the operator associated to $K_i$. Define $K_i'$ as an operator in $\mathbb{B}( \ell^2 (X_i) )$ by $$T_i' \coloneqq T_i - b_i \sum_{j=1}^{|X_i|} \chi_{(0,\infty)}(P_{x_j}^R T_iP_{x_j}^R) ,$$ where $P_{x_j}^R:\ell^2(X_i)\to \ell^2(B_{R}(x_j))_0=\big\{f :B_{R}(x_j)\to \R; \sum\limits_{y\in B_R(x_j)} f(y)=0\big\}$ is the orthogonal projection, $\chi_{(0,\infty)}$ is the characteristic function of $(0,\infty)$ and the operator $\chi_{(0,\infty)}(P_{x_j}^R T_iP_{x_j}^R)$ exists by functional calculus.
		Now,
		
		$$\max_{x,y\in X_i} |K_i'(x,y) -K_i(x,y) | \leq \bigg| \langle b_i \sum_{j=1}^{|X_i|} \chi_{(0,\infty)}(P_{x_j}^R T_iP_{x_j}^R) \delta_x,\delta_y \rangle \bigg|  \leq b_i d^{R}.$$
		
		The last inequality comes from fact that each $x \in X_i$ is in at most $d^R$ components of the cover and $\chi_{(0,\infty)}(P_{x_j}^R T_iP_{x_j}^R)\leq 1$. We know that the sequence $(b_i)_{i\in \N}$ converges to zero, then take $C_{i,R}\coloneqq b_i d^{R}$. Thus, $|K_i' -K_i |_{max}\leq C_{i,R}$ that tends to zero when $i$ goes to infinity. Moreover, the sequence $ (K_i')_{i \in \N}$ approximates $\psi $ as $(K_i)_{i \in \N}$ also does.
		
		Using the fact that each subset of diameter less than $2R$ is in at least one of the components. We have  $P_{x}^{R}P_{x_k}^R=P_{x_k}^R P_{x}^{R}=P_{x}^{R}$, for at least one $k$. We prove that $K_i'$ is $R$-locally conditionally negative definite function calculating the spectrum of $P_{x}^{R} T_i' P_{x}^{R}$ for all $x\in X_i$. Notice that 
		
		\begin{equation*}
			\begin{split}
				P_{x}^{R} T_i' P_{x}^{R} & = P_{x}^{R} \bigg[ T_i - b_i \sum_{j=1}^{|X_i|}  \chi_{(0,\infty)}(P_{x_j}^R T_i P_{x_j}^R)\bigg] P_{x}^{R}\\
				& = P_{x}^{R} \bigg[P_{x_k}^R T_i P_{x_k}^R - b_i \sum_{j=1}^{|X_i|}  \chi_{(0,\infty)}(P_{x_j}^R T_i P_{x_j}^R)\bigg] P_{x}^{R}\\
				&= P_{x}^{R} \bigg[ P_{x_k}^RT_i P_{x_k}^R- b_i \chi_{(0,\infty)}(P_{x_k}^R T_i P_{x_k}^R) \bigg] P_{x}^{R}  - \\
				& \hspace{1cm}- P_{x}^{R}\bigg[ b_i \sum_{j\neq k } \chi_{(0,\infty)}(P_{x_j}^R T_i P_{x_j}^R)\bigg] P_{x}^{R}
			\end{split}
		\end{equation*}
		
		By construction, we can see that $\langle P_{x_k}^RT_i P_{x_k}^R- b_i \chi_{(0,\infty)}(P_{x_k}^R T_i P_{x_k}^R) f,f \rangle$ is less than or equal to zero and $-b_i \langle \sum_{j\neq k } \chi_{(0,\infty)}(P_{x_j}^R T_i P_{x_j}^R) f,f \rangle$ is also negative, for every $f \in \ell^2(B_R(x))_0$. Therefore, $K_i'$ is a $R$-locally conditionally negative definite function.\end{proof}

	By the lemma above, it remains to pick the appropriate sequence of radii $(R_i)_{i\in\N}$ such that the sequence of kernels $(K_i')_{i\in\N}$ induces an asymptotic coarse embedding into a Hilbert space.

	\begin{thm} \label{thmtopaTmenable}
		Let $\mathcal{X}=\{X_i\}_{i\in\N}$ be a sequence of finite graphs with bounded degree and $X$ the related space of graphs. If the coarse boundary groupoid $\partial G(X)$ is a topologically a-T-menable then the sequence of graphs $\mathcal{X}$ is asymptotically coarsely embeddable into a Hilbert space $\mathcal{H}.$
	\end{thm}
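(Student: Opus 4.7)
The plan is to assemble the three lemmas already proved into an asymptotic coarse embedding, with the only genuine choice being the sequence of radii $(R_i)_{i\in\N}$. Since $\partial G(X)$ is topologically a-T-menable, there is a proper continuous conditionally negative definite function $\psi:\partial G(X)\to \R$. First, I would invoke Lemma \ref{Lemma1} to represent $\psi$ by a sequence of symmetric, limit-normalized kernels $(K_i\colon X_i\times X_i\to \R)_{i\in\N}$ equipped with non-decreasing control functions $\rho_1,\rho_2$ tending to infinity (properness of $\psi$ is used precisely here).

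Next, Lemma \ref{Lemma2} provides, for every $R>0$, a sequence $(b_i(R))_{i\in\N}\to 0$ such that $\sigma(P^R_xT_iP^R_x)\subseteq(-\infty,b_i(R)]$ uniformly in $x\in X_i$, and Lemma \ref{Lemma3} then gives a perturbation $K_i'$ that is \emph{$R$-locally} conditionally negative definite with $\max|K_i'-K_i|\leq b_i(R)\,d^R$. The step where the construction must be carried out carefully is the choice of $R_i\to\infty$: I would use a diagonal argument. For each $k\in\N$, using $b_i(k)\to 0$ as $i\to\infty$, pick $i_k$ with $i_k<i_{k+1}$ such that $b_i(k)\,d^k\leq 1/k$ whenever $i\geq i_k$. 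Then set $R_i:=k$ on the block $i_k\leq i<i_{k+1}$, so that $R_i\to\infty$ and $C_i:=b_i(R_i)\,d^{R_i}\to 0$. Applying Lemma \ref{Lemma3} with $R=R_i$ yields a kernel $K_i'$ that is $R_i$-locally conditionally negative definite and $C_i$-close to $K_i$ in the sup norm.

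It remains to verify that $(K_i')_{i\in\N}$ satisfies the three clauses of Definition \ref{asympEmb}. Limit-normalization follows since $|K_i'(x_i,x_i)-K_i(x_i,x_i)|\leq C_i\to 0$ and $(K_i)$ is already limit-normalized. For the control functions, setting $C:=\sup_i C_i<\infty$ the pair $(\rho_1-C,\rho_2+C)$ still consists of non-decreasing functions tending to infinity (the shift by $C$ does not affect the asymptotic behavior), and $\rho_1(d(x,y))-C\leq K_i'(x,y)\leq \rho_2(d(x,y))+C$ holds for all $x,y\in X_i$. The local conditional negativity along $(R_i)$ is exactly the output of Lemma \ref{Lemma3}. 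Therefore the sequence $(K_i')_{i\in\N}$ realizes an asymptotic coarse embedding of $\mathcal{X}$ into a Hilbert space, as desired.

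The main obstacle is the diagonal selection of $(R_i)$: Lemma \ref{Lemma2} provides a separate sequence $b_i(R)\to 0$ for each $R$, so to arrange simultaneously $R_i\to\infty$ and $b_i(R_i)\,d^{R_i}\to 0$ one must balance the exponential blow-up $d^{R_i}$ in the size of balls against the rate at which the spectral defect vanishes. The diagonal above is what makes this balancing possible and is the one place where the argument is not purely formal.
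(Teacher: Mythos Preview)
Your proposal is correct and follows essentially the same route as the paper: invoke Lemma~\ref{Lemma1} for the controlled lift, then Lemmas~\ref{Lemma2}--\ref{Lemma3} to perturb to an $R$-locally conditionally negative definite kernel, and finally shift the control functions by $C=\sup_i C_i$. The only difference is in the selection of $(R_i)$: the paper sets $R_i:=\max\{R>0: C_{i,R}\leq 1/R\}$ in one line, whereas you spell out a block-diagonal choice $R_i=k$ on $[i_k,i_{k+1})$; both achieve $R_i\to\infty$ and $C_{i,R_i}\to 0$, and your version has the minor advantage of making the balancing of $d^{R}$ against $b_i(R)$ explicit (you should also say what $R_i$ is for $i<i_1$, but this is trivial).
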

	\begin{proof}
		
		Let $\partial G(X)$ be a topologically a-T-menable groupoid, thus there is a continuous proper conditionally negative definite function $\psi: \partial G(X) \to \R$. Applying Lemma \ref{Lemma1}, there exists an approximation of $\psi$ by a sequence of symmetric limit-normalized functions $(K_i: X_i \times X_i \to \R )_{i\in\N}$ and increasing control functions $\rho_1^{K_i}, \rho_2^{K_i}: \R_+ \to \R$  such that
		\begin{equation} \label{controlofK}
			\rho_1^{K_i}(d(x_i,y_i)) \leq K_i(x_i,y_i) \leq \rho_2^{K_i}(d(x_i,y_i)) \hspace{0.4cm} \forall x_i,y_i \in X_i  \text{ and }i\in \N.
		\end{equation}
		For all $R>0$, the sequence of kernels $(K_i)_{i \in \N}$ satisfies Lemma \ref{Lemma2}, so we can apply Lemma \ref{Lemma3}, and obtain another sequence of kernels $(K_{i,R}')_{i\in \N}$ approximating $\psi$ such that $|K_{i,R}' -K_i |_{max}\leq C_{i,R}$, where the sequence of $(C_{i,R})_{i\in\N}$ converges to zero and the sequence $(K_{i,R}')_{i\in \N}$ is $R$-locally conditionally negative definite. Even more, it is limit-normalized.

		We need to exhibit an increasing sequence of $(R_i)_{i\in\N}$ such that a sequence of kernels is $(R_i)$-locally conditionally negative definite function. For that just take $K_i'=K_{i,R_i}'$ for  $$R_i\coloneqq \max \bigg\{R>0: C_{i,R} \leq \frac{1}{R} \bigg\}.$$

		We finish the proof with the existence of the control functions $\rho_1^{K_i'}$ and $\rho_2^{K_i'}$ for the sequence of kernels $(K_i')_{i\in\N}$.  Take $C=\sup\limits_{i \in \N} C_{i,R_i}$. The supremum exists since the sequence $(C_{i,R_i})_{i\in\N}$ goes to zero. By Equation \ref{controlofK} and the fact that $|K_i' -K_i |_{max}\leq C_{i,R_i}$, we have $$\rho_1^{K_i}(d(x_i,y_i))-C \leq K_i'(x_i,y_i) \leq \rho_2^{K_i}(d(x_i,y_i))+C .$$ 
		Thus, the control functions defined by $\rho_1^{K_i'}(x_i,y_i) =\rho_1^{K_i}(d(x_i,y_i)) - C$ is a lower bound and as $\rho_2^{K_i'}(x_i,y_i) =\rho_2^{K_i}(d(x_i,y_i)) + C$ is an upper bound of the sequence $(K_i'(x_i,y_i))_{i\in\N}$. 
		
		We conclude that the sequence of graphs $\{X_i\}_{i\in\N}$ is asymptotically coarsely embeddable into a Hilbert space.\end{proof}

	\subsubsection{Weak asymptotic coarse embeddability}

	In this section we analyze measurable version of a-T-menability for the boundary groupoid. Similarly to the amenable case, one easily observes that measurable a-T-menability of the coarse boundary groupoid $(\partial G(X),\hat{\mu}_\omega)$ along a non-principal ultrafilter (or even along every non-principal ultrafilter) is not a sufficient condition for the related sequence of graphs to be asymptotically coarsely embeddable into a Hilbert space. A counterexample can be easily constructed -- completely analogously to the amenable case from \cite{kaiser2019combinatorial} -- by placing a negligible geometric property (T) sequence of graphs (for instance, a box space of $\SL(3,\mathbb Z)$) near an asymptotically coarsely embeddable sequence (for instance, a box space of a free group).

	Our setup for this section is quite similar to the topological one, but the hypothesis of the measurable version is way weaker. We can not apply Tiezte's Theorem here to lift the function $\psi$ to a sequence of functions defined in the graphs. Instead of that, we define the sequence of kernels by the characterization of the groupoid via ultraproducts. More precisely, for a fixed non-principal ultrafilter $\omega \in\partial\beta \N$, given a conditional negative definite function $\psi: (\partial G (X),\hat{\mu}_\omega) \to \R$, there exists, by Proposition \ref{measufunc}, a sequence of maps $\psi_i$ from the graphs $ X_i\times X_i$ to the real numbers $\R$ that approximates $\psi$. We again view the lifts $\psi_i$ as kernels $K_i$ along the graph sequence.


	\begin{lemma}\label{controlmeasure}
		Let $\psi:({\partial G(X)}, \hat{\mu}_\omega) \to \R$ be a measurable conditionally negative definite function such that the sequence of kernels $(K_i)_{i\in \N}$ approximates $\psi$ along a non-principal ultrafilter $\omega \in \partial \beta \N$. Then, for all $\varepsilon>0$, there exists a function $\rho_2^\varepsilon:\R_+ \to \R$ such that $\lim\limits_{i\to\omega} \hat{\mu}_i(B_i) \leq \frac{\varepsilon}{2}$, where $$B_i=\{ (x_i,y_i) \in X_i \times X_i ; K_i(x_i,y_i)\geq \rho_2^\varepsilon(d(x_i,y_i))\}.$$
		
		Moreover, if $\psi$ is measurably proper then, for all $\varepsilon>0$, exists $\rho_1^\varepsilon:\R_+ \to \R$ such that $\lim\limits_{i\to\omega} \hat{\mu}_i(A_i) \leq \frac{\varepsilon}{2}$, where $$A_i=\{ (x_i,y_i) \in X_i \times X_i ; K_i(x_i,y_i)\leq \rho_1^\varepsilon(d(x_i,y_i))\}$$ and the functions $\rho^\varepsilon_1, \rho^\varepsilon_2$ go to infinity at infinity. In particular, $\lim\limits_{i\to\omega} \hat{\mu}_i (A_i\cup B_i )\leq \varepsilon$ and the maps $\rho_1^\varepsilon$, $\rho_2^\varepsilon$ are control functions for $(K_i)_{i\in \N}$ along the sequence of graphs $X_i\times X_i\setminus (A_i\cup B_i )$ with the induced length metric of $X_i\times X_i$. 
	\end{lemma}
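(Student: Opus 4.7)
The plan is to decompose $\partial G(X)=\bigsqcup_{k\geq 0}S_{\omega,k}$ into the shells $S_{\omega,k}=[S_k^i]_\omega$ of pairs at distance exactly $k$ and to build the control functions from shell-by-shell measure bounds. Since the graphs have degree at most $d$, each shell has finite mass $\hat{\mu}_\omega(S_{\omega,k})\leq d^k$, so $(\partial G(X),\hat{\mu}_\omega)$ is $\sigma$-finite. Because $\psi$ is a measurable real-valued function, on each finite-measure shell it is a.e.\ finite, so for each $k$ one can pick an integer $M_k$ with $\hat{\mu}_\omega(\{\psi\geq M_k\}\cap S_{\omega,k})<\varepsilon/2^{k+2}$. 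I would define $\rho_2^\varepsilon(r)=\max_{j\leq\lceil r\rceil}M_j$ and set $B_\omega=\{(x,y)\in\partial G(X):\psi(x,y)\geq\rho_2^\varepsilon(d(x,y))\}$; by $\sigma$-additivity, $\hat{\mu}_\omega(B_\omega)<\varepsilon/4$.

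The core step is to identify $[B_i]_\omega$ with $B_\omega$ up to a null set; once this is done, Lemma~\ref{subsetUltra} yields $\lim_{i\to\omega}\hat{\mu}_i(B_i)=\hat{\mu}_\omega(B_\omega)<\varepsilon/2$, which is the desired estimate. Using Proposition~\ref{measufunc}, I decompose each shell as $S_{\omega,k}=\bigsqcup_L H_{\omega,k,L}$ with $H_{\omega,k,L}=[H_{i,k,L}]_\omega$, so that (since $M_k\in\Z$) $B_i\cap S_k^i=\bigsqcup_{L\geq M_k}H_{i,k,L}$ and $B_\omega\cap S_{\omega,k}=\bigsqcup_{L\geq M_k}H_{\omega,k,L}$. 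The identification then reduces to showing that the measured ultraproduct respects these two layers of countable disjoint union, which I handle by approximating by finite subunions (Lemma~\ref{subsetUltra} applies directly in the finite case) and passing to the limit by monotone convergence, justified by the finite shell masses $\hat{\mu}_\omega(S_{\omega,k})\leq d^k$ and by the finite total $\hat{\mu}_\omega(B_\omega)<\varepsilon/4$.

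For the lower bound, assuming $\psi$ is measurably proper, the same construction with $N_k\in\Z$ satisfying $\hat{\mu}_\omega(\{\psi\leq N_k\}\cap S_{\omega,k})<\varepsilon/2^{k+2}$ produces $\rho_1^\varepsilon$ together with the bound $\lim_{i\to\omega}\hat{\mu}_i(A_i)\leq\varepsilon/2$. Properness enters when forcing $\rho_1^\varepsilon(r)\to\infty$ (and, by refining the first construction, also $\rho_2^\varepsilon(r)\to\infty$): since $\hat{\mu}_\omega(\{\psi\leq C\})<\infty$ for every $C$, one has $\hat{\mu}_\omega(\{\psi\leq C\}\cap S_{\omega,k})\to 0$ as $k\to\infty$, so a diagonal replacement $N_k\mapsto\max\{N_k,k,N_{k-1}+1\}$ (and analogously for $M_k$) preserves the measure estimates while forcing the desired growth. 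The combined bound $\hat{\mu}_i(A_i\cup B_i)\leq\hat{\mu}_i(A_i)+\hat{\mu}_i(B_i)\leq\varepsilon$ and the inequality $\rho_1^\varepsilon(d)<K_i<\rho_2^\varepsilon(d)$ on $X_i\times X_i\setminus(A_i\cup B_i)$ are then immediate from the definitions.

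The main obstacle will be the identification $[B_i]_\omega=B_\omega$ (and its analogue for $A_i$): a naive swap of the ultralimit with the infinite sum $\hat{\mu}_i(B_i)=\sum_k\hat{\mu}_i(B_i\cap S_k^i)$ is not valid (the sequence $a_{i,k}=\delta_{i,k}$ has $\lim_i\sum_k a_{i,k}=1$ while $\sum_k\lim_i a_{i,k}=0$), so the proof cannot proceed by summing shell-by-shell bounds on the finite graphs. The device that makes the argument go through is to build $B_\omega$ intrinsically on the ultraproduct, obtain its measure bound inside the $\sigma$-finite space $(\partial G(X),\hat{\mu}_\omega)$ by $\sigma$-additivity, and transfer this single bound back to the graphs through Lemma~\ref{subsetUltra}; verifying that $[B_i]_\omega$ and $B_\omega$ coincide modulo null sets is where the measured-ultraproduct machinery of Section~\ref{chapGroupoid} does the real work.
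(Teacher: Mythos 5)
Your proposal follows the paper's own proof very closely: the same decomposition of $\partial G(X)$ into the finite-measure distance shells $S_{\omega,k}=\bigsqcup_L H_{\omega,k,L}$ coming from Proposition \ref{measufunc}, the same choice of integer thresholds with tail mass at most $\varepsilon/2^{k+1}$ (the paper's $L_k^\varepsilon$, your $M_k$), the same reassembly of these thresholds into $\rho_2^\varepsilon$, and the same use of measurable properness, via $\hat{\mu}_\omega(\{\psi\le C\}\cap S_{\omega,k})\to 0$, to build $\rho_1^\varepsilon$ and force both control functions to infinity. Your version is marginally more careful in making $\rho_2^\varepsilon$ monotone from the outset, which the paper needs later (when comparing the induced and ambient metrics) but writes as $\rho_2^\varepsilon(t)=L^\varepsilon_{\lfloor t\rfloor}$ without arranging it.

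The divergence is exactly at the step you single out as the crux, and there your device does not close the gap. The paper simply asserts $\lim_{i\to\omega}\hat{\mu}_i\bigl(\bigsqcup_k\bigsqcup_{L\ge L^\varepsilon_k}H_{i,k,L}\bigr)\le\sum_k\sum_{L\ge L^\varepsilon_k}\hat{\mu}_\omega(H_{\omega,k,L})$, which is precisely the interchange your $a_{i,k}=\delta_{i,k}$ example rules out. You rightly refuse to do this, but the replacement fails for the same reason: the identification $[B_i]_\omega=B_\omega$ modulo null sets, and hence $\lim_{i\to\omega}\hat{\mu}_i(B_i)=\hat{\mu}_\omega(B_\omega)$, cannot hold in general, because $B_i$ lives in all of $X_i\times X_i$ and may contain pairs whose distance grows with $i$. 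A sequence of such pairs defines a point of the full measured ultraproduct of $(X_i\times X_i,\hat{\mu}_i)$ lying outside $\partial G(X)=\bigcup_k S_{\omega,k}$, so its mass is counted by $\lim_{i\to\omega}\hat{\mu}_i(B_i)$ but is invisible to $\hat{\mu}_\omega(B_\omega)$. Nothing in the hypothesis that $(K_i)$ represents $\psi$ constrains $K_i$ off the fixed-distance shells: one can alter a representing sequence on all pairs at distance greater than $i$ without changing any $H_{\omega,k,L}$, and thereby pump arbitrary mass into $B_i$. Monotone convergence only yields $\sum_k\lim_{i\to\omega}\hat{\mu}_i(B_i\cap S_k^i)\le\lim_{i\to\omega}\hat{\mu}_i(B_i)$, the wrong direction; what your argument honestly proves is $\lim_{i\to\omega}\hat{\mu}_i(B_i\cap E_K^i)\le\varepsilon/2$ for every fixed $K$. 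To get the full statement one must either control $K_i$ at large distances a priori (for the upper bound it suffices to enlarge $\rho_2^\varepsilon(\ell)$ beyond $\sup_i\max_{d(x_i,y_i)=\ell}K_i(x_i,y_i)$ whenever that is finite) or restrict $A_i\cup B_i$ to a bounded neighbourhood of the diagonal; a parallel caveat applies to the identity $S_{\omega,k}=\bigsqcup_L H_{\omega,k,L}$, which requires no mass to escape in $L$ and holds for the representatives produced by Proposition \ref{measufunc} but is not forced by the bare definition of ``represents''. Since the published proof performs the same illegal interchange silently, you have located a genuine weak point of the lemma rather than merely missed its argument; but as written, your proposal relocates the gap instead of filling it.
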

	\begin{proof}
		Fix an $\varepsilon>0$ and non-principal ultrafilter $\omega\in\partial \beta \N$, recall the notation of Proposition \ref{measufunc}, where $H_{\omega,k,L}$ is approximated by the sequence of sets $H_{i,k,L}=\{ (x_i,y_i) \in X_i \times X_i;  d(x_i,y_i)=k, K_i(x_i,y_i)\in [L,L+1) \} $, i.e, $H_{\omega,k,L}= [H_{i,k,L}]_\omega$ and  $S_{\omega,k}=\{(x,y) \in \partial \beta X \times \partial \beta X ; d(x,y)=k  \}$, for $k\in \N$ and $L\in \Z$. Notice that $S_{\omega,k}\subseteq \overline{E_k}$ has finite measure and $S_{\omega,k} = \sqcup_{L=-\infty}^{\infty} H_{\omega,k,L}$. So, for any $k\in \N$, there is a $L^\varepsilon_k\in\N$ such that $$ \sum_{L=L^\varepsilon_k}^{\infty} \hat{\mu}_\omega(H_{\omega,k,L})\leq \dfrac{\varepsilon}{2^{k+1}}.$$

		For every $t \in \R_+$, define $\rho_2^\varepsilon(t)= L_{\lfloor t \rfloor}^\varepsilon$, where  $\lfloor t \rfloor$ is the integer part of $t$. Notice that $B_i=\{ (x_i,y_i) \in X_i \times X_i ; K_i(x_i,y_i)\geq \rho_2^\varepsilon(d(x_i,y_i))\} = \sqcup_{k\in \N} \sqcup_{L\geq L_k^\varepsilon} H_{i,k,L}$, where $k=d(x_i,y_i)$. Thus, 
		
		\begin{align*}
			\lim_{i\to\omega} \hat{\mu}_i (B_i) &= \lim_{i\to\omega} \hat{\mu} _i\bigg( \bigsqcup_{k\in \N}\bigsqcup_{L\geq L_k^\varepsilon}^\infty H_{i,k,L} \bigg)\\
			& \leq \sum_{k\in\N} \sum_{L=L^\varepsilon_k}^{\infty} \hat{\mu}_\omega(H_{\omega,k,L})\\
			&\leq \sum_{k\in\N} \dfrac{\varepsilon}{2^{k+1}}\\
			&= \frac{\varepsilon}{2}.
		\end{align*}

		Assume now that $\psi$ is a measurably proper function, i.e, for every $C>0$, the set $\{\gamma \in \partial G(X); \psi(\gamma) \leq C  \}$ has finite measure.  For a given $C>0$, exist $k'\in\N$ such that, for every $k\geq k'$, we have $ \hat{\mu}_\omega(\{\gamma \in S_{\omega,{k}}; \psi(\gamma) \leq C  \})\leq \varepsilon$, since we can decompose $\partial G(X)$ as $ \sqcup_{k\in \N} S_{\omega,k}$. We define the map $\rho_1^\varepsilon$ for every  positive real number $t$ as $$\rho_1^\varepsilon(\lfloor t \rfloor)= \max \bigg\{C>0 ; \hat{\mu}_\omega \bigg(\{\gamma \in S_{\omega,\lfloor t \rfloor}; \psi(\gamma) \leq C  \}\bigg)\leq \dfrac{\varepsilon}{2^{\lfloor t \rfloor+1}}\bigg\}.$$
		
		Furthermore, the set $A_i=\{ (x_i,y_i) \in X_i \times X_i ; K_i(x_i,y_i)\leq \rho_1^\varepsilon(d(x_i,y_i))\}$  can described as $ \sqcup_{k\in\N} \sqcup_{L=-\infty}^{\rho_1^\varepsilon(k)-1} H_{i,k,L}$, therefore 
		
		\begin{align*}
			\lim_{i\to\omega} \hat{\mu}_i(A_i) &= \lim_{i\to\omega} \hat{\mu}_i\bigg(\bigsqcup_k \bigsqcup_{L=-\infty}^{\rho_1^\varepsilon(k)-1} H_{i,k,L}\bigg) \\
			&\leq \sum_{k\in\N} \sum_{L=-\infty}^{\rho_1^\varepsilon(k)-1} \hat{\mu}_\omega(H_{\omega,k,L}) \\
			& \leq \sum_{k\in\N} \dfrac{\varepsilon}{2^{k+1}}\\
			&= \frac{\varepsilon}{2}. 
		\end{align*}

		By the construction, it follow that
		$$\lim\limits_{i\to\omega} \hat{\mu}_i (A_i\cup B_i )\leq \varepsilon$$
		Even more, the following inequality follows if we use the old length metrics of the graphs $X_i$
		$$\rho_1^\varepsilon(d(x_i,y_i)) \leq K_i(x_i,y_i)\leq  \rho_2^\varepsilon (d(x_i,y_i)),$$ for $x_i,y_i\in X_i\times X_i\setminus (A_i\cup B_i)$. 
		The old length metric $d_i$ of each graph $X_i$ is necessary here because after we remove the points in $A_i\cup B_i $ from the graph $X_i$, we might had disconnected the graph or increase the distance between points. Thus, the first inequality of the control functions $\rho_1^\varepsilon(d(x_i,y_i)) \leq K_i(x_i,y_i)\leq  \rho_2^\varepsilon (d(x_i,y_i))$ might not hold any longer. The second inequality still valid because $\rho_2^\varepsilon$ is monotonous and $d|_{X_i}(x_i,y_i) \leq d|_{X_i\times X_i\setminus A_i\cup B_i}(x_i,y_i)$, for all $x_i,y_i\in X_i\times X_i\setminus (A_i\cup B_i)$.

		Moreover, since $\psi$ is measurably proper, when $k$ tends to infinity then $\rho_1^\varepsilon(k)$ goes to infinity and so $\rho_2^\varepsilon(k)$ goes to infinity.\end{proof}

	Even in this situation, we can conclude that the sequence of graphs related to a measurably a-T-menable coarse boundary groupoid $\partial G(X)$ is ``almost asymptotically coarsely embeddable'' in the following sense.

	\begin{thm}\label{thmMeaaTmenable}
		Let $\mathcal{X}=\{X_i\}_{i\in\N}$ be a sequence of finite graphs with bounded degree and $X$ the related space of graphs. If the coarse boundary groupoid $(\partial G(X),\mu_\omega)$ is a measurably a-T-menable for every non-principal ultrafilter $\omega\in\partial\beta\N$, then, for every $\varepsilon>0$, there exist $\{Z_i\}_{i\in\N}\subset \{X_i\}_{i\in\N}$ such that $\lim\limits_{i\to\infty} {\mu}_i(Z_i)\leq \varepsilon$ and $\{X_i\backslash Z_i\}_{i\in\N}$ is asymptotically coarsely embeddable into a Hilbert space $\mathcal{H}$ when equipped with the restriction of the metric from $X$.
	\end{thm}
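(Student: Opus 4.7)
The plan is to follow the strategy of Theorem \ref{thmtopaTmenable} almost verbatim, substituting Lemma \ref{controlmeasure} for Lemma \ref{Lemma1} and discarding sets of small measure whenever the measurable analogue loses uniformity. Fix $\varepsilon > 0$ and a non-principal ultrafilter $\omega \in \partial \beta \N$. Measurable a-T-menability yields a measurably proper conditionally negative definite function $\psi \colon \partial G(X) \to \R$; by Proposition \ref{measufunc} this is represented by a sequence $(K_i \colon X_i \times X_i \to \R)_{i \in \N}$, which after symmetrizing and resetting the diagonal to zero (both measure-zero modifications) may be assumed symmetric, with $K_i(x,x) = 0$, and limit-normalized. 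Apply Lemma \ref{controlmeasure} with parameter $\varepsilon/2$ to obtain control functions $\rho_1, \rho_2 \colon \R_+ \to \R$ tending to infinity at infinity and pair sets $A_i \cup B_i \subseteq X_i \times X_i$ with $\lim_{i \to \omega} \hat{\mu}_i(A_i \cup B_i) \leq \varepsilon/2$ on whose complement the control bounds $\rho_1(d(x,y)) \leq K_i(x,y) \leq \rho_2(d(x,y))$ hold with the metric from $X$. Define the vertex set $Z_i^{(1)} := \pi_1(A_i \cup B_i) \cup \pi_2(A_i \cup B_i)$; using symmetry of $K_i$ and the formula $\hat{\mu}_i(S) = |S|/|X_i|$ one gets $\mu_i(Z_i^{(1)}) \leq \varepsilon/2$, and on $(X_i \setminus Z_i^{(1)})^2$ the control bounds hold.

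The next step is a measurable adaptation of Lemmas \ref{Lemma2} and \ref{Lemma3}. For each $R \in \N$ and $x \in X_i$, let $b_i^R(x) := \max(0, \sigma(P_x^R T_i P_x^R))$ with $T_i$ the operator induced by $K_i$. The ultralimit argument in the proof of Lemma \ref{Lemma2}, combined with the fibre identification of Proposition \ref{isometric} and the fact that $\psi$ is conditionally negative definite on every source fibre $\big(\partial G(X)\big)_\eta$, shows that $b_i^R$ tends to zero in $\mu_i$-measure along $\omega$: a positive lower bound on $\mu_i(\{b_i^R > \delta\})$ along $\omega$ would, after passing to an appropriate sub-ultrafilter and taking limit points, produce a cnd-violating configuration in the fibre of some $\eta \in \partial \beta X$, contradicting the negativity of $\psi$. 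Consequently $Z_i^{(2)}(R) := \{x \in X_i : b_i^R(x) > 2^{-i}\}$ satisfies $\lim_{i \to \omega} \mu_i(Z_i^{(2)}(R)) = 0$ for each fixed $R$. On the subgraph $X_i \setminus (Z_i^{(1)} \cup Z_i^{(2)}(R))$ the construction of Lemma \ref{Lemma3} applies essentially verbatim, restricting the corrective sum of projections to centres in the complement of the exceptional set; this produces a perturbation $K_i'^{(R)}$ which is $R$-locally conditionally negative definite there and satisfies $|K_i'^{(R)} - K_i|_{\max} \leq C_{i,R}$ with $C_{i,R} \to 0$ along $\omega$.

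A diagonal choice completes the argument. Let $R_i \in \N$ be the largest value such that both $C_{i,R_i} \leq 1/R_i$ and $\mu_i(Z_i^{(2)}(R_i)) \leq 1/i$; then $R_i \to \infty$ along $\omega$. Put $Z_i := Z_i^{(1)} \cup Z_i^{(2)}(R_i)$ and $K_i' := K_i'^{(R_i)}$ restricted to $(X_i \setminus Z_i)^2$. One checks that $\lim_{i \to \omega} \mu_i(Z_i) \leq \varepsilon/2 < \varepsilon$ and that $(K_i')$ is symmetric, limit-normalized, $(R_i)$-locally conditionally negative definite, and bracketed by the non-decreasing monotone envelopes of $\rho_1 - C$ and $\rho_2 + C$ (where $C := \sup_i C_{i, R_i} < \infty$) with respect to the metric inherited from $X$, exhibiting the required asymptotic coarse embedding of $\{X_i \setminus Z_i\}_{i \in \N}$. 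Since the hypothesis applies to every non-principal ultrafilter, the standard ultrafilter argument (cf.\ Lemma \ref{hyperultra}) promotes the $\lim_{i \to \omega}$ estimates to $\lim_{i \to \infty}$. The main obstacle is precisely the measurable analogue of Lemma \ref{Lemma2}: because $\psi$ is merely measurable rather than continuous, the spectral defect $b_i^R$ only vanishes in $\mu_i$-measure and not uniformly in $x$, which forces the additional exceptional sets $Z_i^{(2)}(R_i)$ and hence the conclusion \emph{after removing a set of measure at most $\varepsilon$} rather than a genuine asymptotic embedding of the whole sequence $\{X_i\}$.
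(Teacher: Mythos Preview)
Your overall strategy coincides with the paper's: lift $\psi$ via Proposition~\ref{measufunc}, discard a small vertex set to restore the control-function inequalities (Lemma~\ref{controlmeasure}), discard a second small set where the local spectral defect $b_i^R(x)$ is too large, run Lemma~\ref{Lemma3} on the remainder, and diagonalize over $R$. The paper performs these removals in the opposite order, but that is immaterial.

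There is, however, a concrete gap in your choice of cutoff. You define $Z_i^{(2)}(R) := \{x \in X_i : b_i^R(x) > 2^{-i}\}$ and assert that $\lim_{i\to\omega}\mu_i(Z_i^{(2)}(R)) = 0$ for each fixed $R$. This does \emph{not} follow from $b_i^R \to 0$ in $\mu_i$-measure: convergence in measure only controls $\mu_i(\{b_i^R > \delta\})$ for each \emph{fixed} $\delta>0$, whereas your threshold $2^{-i}$ shrinks with $i$. For a concrete failure, take $b_i^R \equiv 1/i$; then $b_i^R\to 0$ uniformly, yet $\{b_i^R > 2^{-i}\} = X_i$ for all large $i$, so $\mu_i(Z_i^{(2)}(R)) = 1$. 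In that situation your diagonal step collapses: for large $i$ no $R$ satisfies the constraint $\mu_i(Z_i^{(2)}(R)) \leq 1/i$, so $R_i$ is not even defined, let alone tending to infinity.

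The paper avoids exactly this by choosing the threshold adaptively: one sets $a_i := \max\{a\geq 0 : \mu_i(\{b_i^R > a\}) \leq 1/i\}$, which by construction gives $\mu_i(Z_i^{a_i}) \leq 1/i$, and then convergence in measure forces $a_i \to 0$ along $\omega$. With this single modification (replace $2^{-i}$ by such an $a_i$), your argument is essentially identical to the paper's and goes through.
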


	\begin{proof}
		
		Given a fixed non-principal ultrafilter $\omega \in \partial \beta \N$ and a measurably proper conditionally negative definite function $\psi$ from $(\partial G(X),\mu_\omega )$  to $\R$. By Lemma \ref{measufunc}, we can approximate $\psi: \partial G(X) \to \R$ by $K_i : X_i \times X_i \to \R$ along the non-principal ultrafilter $\omega \in \partial \beta \N$.

		First, for some constant $a\geq0$, take the set $Z_i^a =\{x \in X_i : b_i(x) > a \}$, where $b_i(x)= \max \{0,\sigma (P_x^R T_i P_x^R)\}$ and $T_i$ is the operator induced by $K_i$. Since the sequence of kernels $(K_{i})_{i\in\N}$ approximates a conditionally negative definite function, the sequence $(b_i(x))_{i\in\N}$ tends to zero for every $x\in X_i$, then  $\lim\limits_{i \to \omega}\mu_i(Z_i^a) = 0$. So, for every $a>0$ and every $\varepsilon>0$, there is $n\in \N$ such that $\mu_i(Z_i^a)\leq \varepsilon$, for all $i\geq n$. Take $a_i \coloneqq \max \{ a \geq 0; \mu_i(Z_i^a)\leq \frac{1}{i}\}$.  In the same way, the sequence $(a_i)_{i\in\N}$ converges to zero when $i$ goes along the non-principal ultrafilter $\omega \in \partial \beta \N$. Therefore, the measure of $\mu_i(Z_i^{a_i})$ also tends to zero along $\omega \in \partial \beta \N$. For $Z_i'\coloneqq Z_i^{a_i}$ and by construction, 
		$$\bigcup_{x \in X_i\backslash Z_i'} \sigma \big( P_x^R  T_i P_x^R \big)  \in  (-\infty, a_i].$$
		
		In order to apply Lemma \ref{Lemma3}, we need to verify that the same holds but now under the projection $Q_x^R:\mathbb{B}(\ell^2(X_i\backslash Z_i')) \to \mathbb{B}(\ell^2(B_R(x)\cap X_i\backslash Z_i'))_0$, where $$\mathbb{B}(\ell^2(B_R(x)\cap X_i\backslash Z_i'))_0= \bigg\{f :B_{R}(x)\cap X_i\backslash Z_i'\to \R; \sum\limits_{y\in B_R(x_j)\cap X_i\backslash Z_i'} f(y)=0\bigg\}.$$ 
		
		Clearly, $P_x^R Q_x^R=Q_x^RP_x^R=Q_x^R$, for all $x\in X_i\backslash Z_i'$. Given $f \in\mathbb{B} (\ell^2(X_i\backslash Z_i'))$, we compute that:
		\begin{align*}
			\langle Q_x^R T_i Q_x^R f ,f \rangle &= 
			\langle Q_x^R P_x^R T_i P_x^R Q_x^R f ,f \rangle \\
			&=\langle  P_x^R T_i P_x^R Q_x^R f ,Q_x^R  f  \rangle.
		\end{align*} 
		
		In particular, $\langle  P_x^R T_i P_x^R f' ,  f'  \rangle  \leq a_i $, for all $ f' \in \mathbb{B}(\ell^2(B_R(x)\cap X_i\backslash Z_i'))_0$ then
		$$\bigcup_{x \in X_i\backslash Z_i'} \sigma \big( Q_x^R  T_i Q_x^R \big)  \in  (-\infty, a_i].$$
		
		For a fixed $R>0$, we can apply Lemma \ref{Lemma3}. So, there exist a sequence of kernels $(K_{i,R}')_{i\in \N}$ that is $R$-locally conditionally negative definite and $||K_{i,R}' - K_i ||_{max} \leq C_{i,R}.$ With the same argument done in Theorem 5.10, for $R_i\coloneqq \max \{R: C_{i,R} \leq \frac{1}{R} \}$, we obtain a sequence $(K_{i,R_i}': X_i\setminus Z_i'\times X_i\setminus Z_i'\to \R)_{i\in \N}$ of conditionally negative definite functions that approximates $\psi$ and an increasing sequence of real numbers $(R_i)_{i\in \N}$, we denote the functions only by $(K_i')_{i\in \N}$.

		We finish the proof with the existence of the control functions along the fixed ultrafilter. For a given $\varepsilon>0$, by Lemma \ref{controlmeasure}, there exist control functions $\rho_1^\varepsilon$ and $\rho_2^\varepsilon$ going to infinity at infinity just along the sets $(X_i\setminus Z_i') \times (X_i\setminus Z_i')\setminus(A_i\cup B_i)$. Take $$Z_i\coloneqq \{x_i \in X_i\setminus Z_i'; \exists y_i \in X_i\setminus Z_i', (x_i,y_i)\in A_i\cup B_i\}$$ that clearly has measure smaller than $\varepsilon$ along the ultrafilter.
		
		Thus, the sequence of graphs $\{X_i\setminus Z_i\}_{i\in \N}$ is asymptotically coarsely embeddable into a Hilbert space, since the restrictions of $K_i'$'s to $X_i\setminus Z_i$ preserve the conditionally negative definite condition and the control functions constrains holds along $X_i\setminus Z_i$ with the old graph metric $d|_{X_i}$.

		We've seen until this point that the sequence of graphs is asymptotically coarsely embeddable into a Hilbert space along the non-principal ultrafilter $\omega \in \partial \beta \N$. Now, we can use the similar arguments made before to extend the result for all natural numbers. For that, suppose that there is a $\varepsilon>0$ such that for all $\{Z_i\}_{i\in \N} $ with $\lim\limits_{i\to \infty} \mu_i(Z_i) \leq \varepsilon$, the sequence $\{X_i\setminus Z_i\}_{i\in \N}$ is not asymptotically coarsely embeddable into a Hilbert space $\mathcal{H}$. So, for all $R>0$, there is a $i_R\in\N$ such that $X_{i_R}\setminus Z_{i_R}$ is not asymptotically coarsely embeddable. The set $I_R=\{i_R; R>0\}$ is infinite, then there is a non-principal ultrafilter $\omega \in \partial\beta \N$ such that $I_R\subseteq \omega$. Thus, $\{X_i\setminus Z_i\}_{i\in \N}$ is not asymptotically coarsely embeddable into a Hilbert space, leading us in a contradiction.  
		
		This finish our proof, concluding that $\{X_i\backslash Z_i\}_{i\in\N}$ is asymptotically coarsely embeddable into a Hilbert space $\mathcal{H}$ in regard to the old length metric.	\end{proof}

One can observe two major differences to the corresponding result in the amenable case. First, instead of discarding an asymptotically negligible set from the graph sequence as in the definition of property almost-A, here we discard a subset of arbitrary small but positive measure. This is a consequence of the 
lifting result for measurably proper conditionally negative definite functions used in the proof of this theorem (Lemma \ref{controlmeasure}). This lifting seems indeed optimal since -- other than in the topological case -- a measurably proper function $\psi(x,y)$ can still remain small subsets of small measure even the distance $d(x,y)$ increases. This suggests that there should exist a sequence of graphs witnessing this phenomenon and therefore requiring $\eps > 0$ in the conclusion of the above theorem; however, we were unable to construct such a sequence directly and therefore leave it as an open question.
\begin{qu}
Does there exist a graph sequence such that $(\partial G(X),\hat{\mu}_\omega)$ is measurably a-T-menable, but does not satisfy the conclusion of the above theorem for $\eps = 0$? Notice that such an example cannot come from a sofic approximation of a group, as explained in the next section.
\end{qu}

Another important difference to the amenable case is that here the technique of the proof forces us to use the old metric even after discarding parts of the graphs as opposed to the induced metric. This naturally leads to the following question:
	
\begin{qu}
Is the sequence of graphs $\{X_i\backslash Z_i\}_{i\in\N}$ (considered \emph{with their own length metric}) asymptotically coarsely embeddable into a Hilbert space?
\end{qu}

Being unable to answer the above question, we are hesitant to decide what the right notion of being ``almost asymptotically coarsely embeddable into a Hilbert space'' should be. As an \emph{ad hoc} notion for the rest of the paper, we say that a sequence of graphs $\mathcal{X}=\{X_i\}_{i\in\N}$ is \emph{ weakly asymptotically embedable into a Hilbert space} if it satisfies the conclusion of Theorem \ref{thmMeaaTmenable}.


	\section{Applications to sofic approximations}

	In this section we apply the results presented in the previous section to the case where the sequence of graphs comes from a sofic approximation of a finitely generated group, thus linking the analytic properties of the group and the coarse geometry properties of the sequence of graphs. We take a finitely generated sofic group $\Gamma$ with sofic approximation $\mathcal{G}=\{X_i\}_{i\in\N}$. Let $X$ be the space of graphs constructed from the sofic approximation.
	
	First we apply Theorem \ref{mainthm} to rededuce the following result:
	\begin{cor}[{\cite[Theorem 5.6]{kaiser2019combinatorial}}]\label{amenableApli}
		Let $\Gamma$ be a sofic group with sofic approximation $\mathcal{G}=\{X_i\}_{i\in\N}$. Then, the sofic approximation $\{X_i\}_{i\in  \N}$ has property A on average if and only if the group $\Gamma$ is amenable.
	\end{cor}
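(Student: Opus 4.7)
The proof should be essentially a direct chaining of two results already established (or cited) in the paper, since the hard work of connecting property A on average to the other geometric and groupoid-theoretic notions has been done in Theorem~\ref{mainthm}.

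My plan is as follows. First, I would invoke Theorem~\ref{mainthm} applied to the sequence $\mathcal{G}=\{X_i\}_{i\in\N}$ (which is a legitimate sequence of finite bounded-degree graphs whose cardinalities tend to infinity, by the definition of sofic approximation in the excerpt). This gives the equivalence of property A on average along every non-principal ultrafilter $\omega\in\partial\beta\N$ with hyperfiniteness of $\mathcal{G}$. I would also use the remark made in the paper (parallel to Lemma~\ref{hyperultra}) that property A on average along $\N$ is equivalent to property A on average along every non-principal ultrafilter; combining this with Theorem~\ref{mainthm} gives directly that property A on average of $\mathcal{G}$ is equivalent to hyperfiniteness of $\mathcal{G}$.

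Second, I would invoke the cited result of Kaiser (stated in the excerpt as ``The sofic approximation $\mathcal{G}$ is hyperfinite if and only if the group $\Gamma$ is amenable''), which immediately translates hyperfiniteness of $\mathcal{G}$ into amenability of $\Gamma$. Concatenating the two equivalences yields the corollary.

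Since both ingredients are already available, there is no real obstacle here; the only thing to check carefully is that the formulation of ``property A on average'' used in the corollary coincides with condition~(2) of Theorem~\ref{mainthm} up to the ultrafilter translation, and that the hypothesis that $\mathcal{G}$ is a sofic approximation of a finitely generated group provides all the background assumptions of Theorem~\ref{mainthm} (connected bounded-degree finite graphs with $|X_i|\to\infty$), which indeed it does by the standing conventions set up earlier in the paper. The proof is therefore one or two lines citing Theorem~\ref{mainthm} and Kaiser's hyperfiniteness characterisation.
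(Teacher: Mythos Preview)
Your argument is correct: chaining Theorem~\ref{mainthm} (property A on average $\iff$ hyperfiniteness of $\mathcal{G}$) with Kaiser's characterisation (hyperfiniteness of a sofic approximation $\iff$ amenability of $\Gamma$) gives the result in one line.

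The paper, however, takes a different route. Rather than passing through hyperfiniteness and quoting Kaiser's combinatorial equivalence, it stays on the groupoid side of Theorem~\ref{mainthm}: it uses condition~(1), measurable amenability of $(\partial G(X),\hat\mu_\omega)$, together with the almost-everywhere isomorphism $(\partial G(X),\hat\mu_\omega)\cong(\partial\beta X,\mu_\omega)\rtimes\Gamma$ from \cite[Theorem 3.11]{alekseev2016sofic}. For the forward direction, amenability of $\Gamma$ makes the crossed product amenable, hence $(\partial G(X),\hat\mu_\omega)$ is measurably amenable, hence property A on average. For the converse, measurable amenability of $\partial G(X)$ transfers to a reduction $Z\rtimes\Gamma$ carrying a $\Gamma$-invariant measure, which forces $\Gamma$ to be amenable.

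The trade-off is this: your approach is shorter but uses Kaiser's hyperfiniteness theorem as a black box, which somewhat undercuts the paper's stated aim of \emph{rededucing} Kaiser's result via the groupoid machinery. The paper's route is more self-contained within the groupoid framework and illustrates directly how the measurable isomorphism with the crossed product does the work; in particular it avoids invoking one theorem of Kaiser to reprove another.
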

	\begin{proof}
		Suppose that $\Gamma$ is amenable, then, the topological groupoid and thus the measured groupoid $\partial \beta X \rtimes \Gamma $ is amenable. According to \cite[Theorem 3.11]{alekseev2016sofic}, there is an almost everywhere isomorphism between $({\partial G(X)},\hat{\mu}_\omega)$ and $(\partial \beta X,\mu_\omega) \rtimes \Gamma $. Thus, by the Theorem \ref{mainthm}, the sofic approximation $\{X_i\}_{i\in \N}$ has property A on average.
		
		On the other hand, if the sofic approximation $\{X_i\}_{i\in  \N}$ has property A on average, then, the sofic approximation has property almost-A, and thus the coarse boundary groupoid $\partial G(X)$ is measurably amenable. For the reduction to the core $Z$, the groupoid $Z \rtimes \Gamma $ is also amenable with a $\Gamma$-invariant measure over $Z$. Therefore, the group $\Gamma$ is amenable.\end{proof}

	Mergin this corollary with all equivalences of Theorem \ref{mainthm} we obtain the following equivalences:
	
	$$\begin{array}{ccc}
		\mathcal{G}  \text{ is hyperfinite} &  & \\
		\Updownarrow &   &\\
		\mathcal{G}  \text{ has property almost-A }& \iff & \Gamma \text{ is amenable}\\
		\Updownarrow &   & \\
		\mathcal{G}   \text{ has property A on average }& & \\
	\end{array}$$
	\vspace*{\parskip}
	

	
	In the a-T-menable case, we could analogously apply Theorem \ref{thmMeaaTmenable} to deduce that every sofic approximation of an a-T-menable group becomes coarsely embeddable into a Hilbert space after removing an arbitrary small propotion of the vertices (but keeping the original metric). However, we can do better using the fact that a-T-menability of a sofic group gives us a conditionally negative definite function on the boundary that only depends on the group. Thus, the sofic approximation of a a-T-menable group is weakly asymptotically coarsely embeddable into a Hilbert space with $\varepsilon=0$.

	\begin{prop}\label{aTmeansoficapp}
		Let $\Gamma$ be a sofic finitely generated group with sofic approximation $\mathcal{G}=\{X_i\}_{i\in\N}$. Then, the group $\Gamma$ is a-T-menable if and only if there is a sequence of subgraphs $\{Z_i\}_{i\in\N} \subset\{X_i\}_{i\in\N} $ of the sofic approximation $\mathcal{G}$ such that $\lim\limits_{i\to \infty} \frac{|Z_i|}{|X_i|}=0$ and $\{X_i\setminus Z_i\}_{i\in\N} $ is asymptotically coarsely embeddable into a Hilbert space $\mathcal{H}$ with the old metric of the sofic approximation.
	\end{prop}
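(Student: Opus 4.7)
The plan is to prove both directions using the identification $(\partial G(X), \hat\mu_\omega) \cong (\partial \beta X, \mu_\omega) \rtimes \Gamma$ available for sofic approximations (see \cite[Theorem 3.11]{alekseev2016sofic}), together with the observation that in the sofic case an asymptotic coarse embedding can be constructed explicitly from a conditionally negative definite function on $\Gamma$ itself, with the ``error region'' coming solely from points whose local structure is not yet Cayley-like.

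For the forward implication, suppose $\Gamma$ is a-T-menable and fix a proper conditionally negative definite function $\psi : \Gamma \to \R$. I would choose a sequence $R_i \to \infty$ slowly enough so that, by Benjamini--Schramm convergence of $\{X_i\}$ to $(\Cay(\Gamma,S), e)$, the exceptional set
\[ Z_i := \{ x \in X_i : B_{R_i}^{X_i}(x) \not\cong B_{R_i}^{\Cay(\Gamma,S)}(e) \text{ as $S$-labeled rooted graphs} \} \]
satisfies $\lim_{i\to\infty} |Z_i|/|X_i| = 0$. For $x \in X_i \setminus Z_i$ and $y \in B_{R_i}^{X_i}(x)$ the labeled ball isomorphism transports $y$ to a unique $g_{x,y} \in \Gamma$, and I set $K_i(x,y) := \psi(g_{x,y})$, extended by $0$ elsewhere. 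Because $\psi$ depends only on the group element, for any $\{x_1, \ldots, x_n\}$ inside an $R_i$-ball around a point of $X_i \setminus Z_i$ and any $\lambda_1, \ldots, \lambda_n$ with $\sum_k \lambda_k = 0$ one obtains
\[ \sum_{k,l} \lambda_k \lambda_l K_i(x_k, x_l) = \sum_{k,l} \lambda_k \lambda_l \psi(g_k^{-1} g_l) \leq 0, \]
so the kernels are locally conditionally negative definite on $X_i \setminus Z_i$ with no sofic error. Properness of $\psi$ supplies the control functions $\rho_1(r) := \min_{g: d_\Gamma(e,g) = r} \psi(g)$ and $\rho_2(r) := \max_{g: d_\Gamma(e,g) = r} \psi(g)$, both tending to infinity, yielding an asymptotic coarse embedding of $\{X_i \setminus Z_i\}$ with $\varepsilon = 0$.

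For the converse, assume $\{X_i \setminus Z_i\}$ is asymptotically coarsely embeddable with respect to the old metric of $X_i$ and $|Z_i|/|X_i| \to 0$. Since $Z_i$ has vanishing density, the sequence $\{X_i \setminus Z_i\}$ itself still Benjamini--Schramm converges to $\Cay(\Gamma,S)$, and the old and intrinsic graph metrics coincide on $R$-balls around a $1 - o(1)$ fraction of points for any fixed $R$. Consequently the ultralimit argument underlying Theorem \ref{VadimTheo}(2) applies essentially verbatim: an ultralimit of the $(R_i)$-locally conditionally negative definite kernels along a non-principal ultrafilter produces a proper conditionally negative definite function on $\Gamma$, forcing a-T-menability. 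The main obstacle I expect is precisely this metric comparison---one must argue that the old-metric control functions still translate into a meaningful limit even though the intrinsic metric on the subgraph $X_i \setminus Z_i$ could a priori be strictly larger---but this is absorbed by the Benjamini--Schramm framework, where both metrics produce the word metric on $\Gamma$ in the limit.
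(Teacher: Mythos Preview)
Your approach is essentially correct but takes a more direct route in the forward direction than the paper. The paper passes through the measured groupoid: it lifts $\psi$ to $\psi'(x,g)=\psi(g)$ on $\partial\beta X\rtimes\Gamma$, transports it via the almost-everywhere isomorphism to $(\partial G(X),\hat\mu_\omega)$, approximates by kernels via Proposition~\ref{measufunc}, and then reruns the spectral-perturbation argument of Theorem~\ref{thmMeaaTmenable} (Lemmas~\ref{Lemma2}--\ref{Lemma3}) to force local negative definiteness, observing that because $\psi'$ is constant along fibers the control functions can be obtained uniformly as in Lemma~\ref{Lemma1} rather than only up to an $\varepsilon$-loss as in Lemma~\ref{controlmeasure}. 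Your construction bypasses all of this: you read the kernels off directly from $\psi$ on the Cayley-like regions, so local conditional negative definiteness holds exactly with no perturbation, and the control functions come for free from properness of $\psi$. This is cleaner in the sofic case; the paper's route has the advantage of fitting into its general groupoid framework.

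One small gap to fix: the phrase ``extended by $0$ elsewhere'' breaks item~(2) of Definition~\ref{asympEmb}, since $\rho_1(d(x,y))\to\infty$ forces $K_i(x,y)$ to be large when $d(x,y)$ is. You should instead extend by any symmetric value in $[\rho_1(d(x,y)),\rho_2(d(x,y))]$; since item~(3) only tests subsets of diameter $<2R_i$, this extension never enters the CND check provided you define $Z_i$ using $2R_i$-balls rather than $R_i$-balls (so that for any $\{x_1,\dots,x_n\}\subset X_i\setminus Z_i$ of diameter $<2R_i$, every $g_{x_k,x_l}$ is defined and equals $g_k^{-1}g_l$). Your converse is essentially the paper's: both invoke Theorem~\ref{VadimTheo}(2), justified by the fact that deleting a null-density subset leaves the measured boundary groupoid unchanged (equivalently, Benjamini--Schramm convergence to $\Cay(\Gamma,S)$ persists, and on $R$-balls around almost every point the old and intrinsic metrics agree).
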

	\begin{proof}
		First, assume that the group $\Gamma$ is a-T-menable. From the proper conditional negative definite function $\psi: \Gamma \to \R$, we can induce a function $\psi'$ on the groupoid $\partial \beta X \rtimes \Gamma$ by $\psi'(x,g)= \psi (g)$ that is also proper conditional negative definite. In the measurable perspective, the function $\psi'$ is also measurably proper in $(\partial \beta X,\mu_\omega) \rtimes \Gamma$, for every non-principal ultrafilter $\omega \in \partial \beta \N$. Even more, the groupoid $(\partial \beta X,\mu_\omega) \rtimes \Gamma$ is almost everywhere isomorphic to $(\partial G(X), \hat{\mu}_\omega)$. Thus, the coarse boundary groupoid  $(\partial G(X), \hat{\mu}_\omega)$ is measurably a-T-menable with a function $\psi'$. Notice then, that the map $\psi'$ is the same in every fiber of $\partial G(X)$. So, the same phenomena that happens at Lemma \ref{controlmeasure} does not occur here, but we still need to discard a set of measure zero from the boundary due to the almost everywhere isomorphism of the groupoids and the measurable approximation.

		By Proposition \ref{measufunc}, for a fixed ultrafilter $\omega \in \partial \beta \N$, there is an approximation of $\psi': (\partial G(X), \hat{\mu}_\omega) \to \R$ via a sequence of functions $(K_i:X_i \times X_i \to \R)_{i\in \N}$. Using the same arguments from Theorem \ref{thmMeaaTmenable}, there exist a sequence of conditionally negative definite functions $(K_{i,R_i}': X_i\setminus Z_i\times X_i\setminus Z_i\to \R)_{i\in \N}$ that also approximates the function $\psi'$, when $R_i'= \max \{R>0; C_{i,R} \leq \frac{1}{R} \}$ is an increasing sequence of real numbers and the measure of $\mu_i(Z_i)$ tends to zero. We denote this approximation only by $(K_i')_{i\in \N}$.

		By the conditions of the proper conditionally definite negative function $\psi$, one can take control functions $\rho_1$, $\rho_2$ in a uniform way, as in the construction of Lemma \ref{Lemma1}. Remember that the approximation of $\psi$ by the sequence of kernels is a measurable approximation, so we still need to discard an negligible subsets along the sequence of graphs to also fix the conditional negative definiteness of the sequence. Thus, the sofic approximation is weakly asymptotically coarsely embeddable into a Hilbert space $\mathcal{H}$. 
		
		The converse follows from Theorem \ref{VadimTheo}, by the fact that there is an almost everywhere isomorphism between the coarse boundary groupoid associated to $\{X_i\}_{i\in\N}$ and the one related to the sequence $\{X_i\setminus Z_i\}_{i\in\N} $ and they are both a-T-menable.\end{proof}
	
Analogously to the general case of a graph sequence, we ask the following question:
\begin{qu}
Is the sofic approximation $\mc X' = \{X_i\backslash Z_i\}_{i\in\N}$ (considered \emph{with their own length metric}) asymptotically coarsely embeddable into a Hilbert space?
\end{qu}

\newcommand{\etalchar}[1]{$^{#1}$}



\end{document}